\newtheorem{theorem}{Theorem}
\newtheorem{lemma}[theorem]{Lemma}
\newtheorem{proposition}[theorem]{Proposition}
\newtheorem{corollary}[theorem]{Corollary}
\theoremstyle{definition}
\newtheorem{definition}{Definition}[section]
\theoremstyle{remark}
\newtheorem{remark}{Remark}[section]
\title[Three Loop Invariants]{The Three Loop Isotopy and Framed Isotopy Invariants of Virtual Knots}
\author{Micah W. Chrisman and Heather A. Dye}
\begin{document}
\begin{abstract} This paper introduces two virtual knot theory ``analogues'' of a well-known family of invariants for knots in thickened surfaces: the Grishanov-Vassiliev finite-type invariants of order two. The first, called the three loop isotopy invariant, is an invariant of virtual knots while the second, called the three loop framed isotopy invariant, is a regular isotopy invariant of framed virtual knots. The properties of these invariants are investigated at length.  In addition, we make precise the informal notion of ``analogue''. Using this formal definition, it is proved that a generalized three loop invariant is a virtual knot theory analogue of a generalization of the Grishanov-Vassiliev invariants of order two.
\end{abstract}
\maketitle
\section{Introduction}
\subsection{Overview} An interpretation of virtual knots is that they are knots in thickened oriented surfaces modulo stabilization and destabilization.  Given a knot in a thickened surface $K$, a virtual knot $K'$ to which $K$ is stably equivalent, and a virtual knot invariant $v$, one easily obtains an invariant of knots in a thickened surface via the map $K \to v(K')$.
\newline
\newline
\noindent This observation becomes more interesting when viewed from the opposite direction. If one starts with an invariant of knots in a thickened surface, under what circumstances does there exist a virtual knot ``analogue''? One may then study the virtual knot invariant by studying its ``analogue''. The advantage of this viewpoint is that one can exploit powerful topological and geometric tools which are available for knots in 3-manifolds. 
\newline
\newline
The present paper considers this approach for a well-known family of invariants of knots in thickened surfaces: the Grishanov-Vassiliev finite-type invariants of order two \cite{GrVa}. We introduce two ``analogues'' of these invariants.  The first, called the three loop isotopy invariants, are  finite-type invariants of order two for virtual knots.  The second, called the three loop framed isotopy invariants, are regular virtual isotopy invariants of framed virtual knots. They are ``analogues'' of the Grishanov-Vassiliev invariants in the sense that they have identical Gauss diagram formulae. The difference is that the topological enhancements of the Gauss diagram in the Grishanov-Vassiliev invariants (which are either conjugacy classes in $\pi_1(\Sigma)$ or elements of $H_1(\Sigma;\mathbb{Z})$) are replaced with combinatorial enhancements in the three loop/three loop framed isotopy invariants. The three loop isotopy invariants are enhanced with a \emph{relative weight} of regions in a Gauss diagram. The three loop framed isotopy invariants are enhanced with the \emph{weight} (or \emph{index}) \cite{allison} of a crossing.
\newpage
\noindent With some additional work, the informal notion of ``analogue'' can be made precise. For this purpose we provide a generalization of the Grishanov-Vassiliev invariants and a generalization of the three loop invariant. For a given closed oriented surface $\Sigma$, the generalized Grishanov-Vassiliev invariant is an invariant $\Phi[\Sigma]$ of knots in $\Sigma \times I$ valued in an abelian group $\mathscr{A}(\Sigma)$. The generalized three loop invariant is a virtual knot invariant $\phi$ valued in an abelian group $\mathscr{A}$. We will prove that $\phi$ is a virtual knot analogue of $\Phi[\Sigma]$ for all $\Sigma$ in the sense that the following diagram commutes.
\[
\xymatrix{
\mathbb{Z}[\mathscr{K}(\Sigma)] \ar[r]^{\Phi[\Sigma]} \ar[d] & \mathscr{A}(\Sigma) \ar[d] \\
\mathbb{Z}[\mathscr{K}]\ar[r]_{\phi} & \mathscr{A} \\
}
\]    
Here, $\mathbb{Z}[\mathscr{K}(\Sigma)]$ is the free abelian group generated by the set of equivalence classes of knots in $\Sigma \times I$ and $\mathbb{Z}[\mathscr{K}]$ is the free abelian group generated by equivalence classes of virtual knots. The left vertical arrow represents the usual projection of knots in $\Sigma \times I$ to virtual knots.  The right vertical arrow, which will be defined later, is constructed using intersection theory.
\newline
\newline
The three loop/three loop framed isotopy invariants have many other interesting properties.  We investigate their properties in terms of finite-type invariants, connected sum, and geometric symmetries (inverses, horizontal mirror images, vertical mirror images, and crossing vitualizations).
\newline
\newline
The organization of the present paper is as follows. In Section \ref{vkt}, we provide a brief review of virtual knot theory. In Section \ref{sec_phi_ijk}, we give a combinatorial definition of the three loop invariant and provide examples of its computation. In Section \ref{sec_phi}, we give a combinatorial definition of the three loop framed isotopy invariant and provide examples of its computation. Knowledge about the Grishanov-Vassiliev invariants of order two is not a prerequisite in Sections \ref{sec_phi_ijk} and \ref{sec_phi}. In Section \ref{sec_properties}, we establish the behavior of the three loop/three loop framed isotopy invariants with respect to finite-type, connected sum, and geometric symmetries. In Section \ref{sec_gvinariant}, we give a precise definition of ``analogue'' and prove that a generalization of the three loop invariants is a virtual knot analogue of a generalization of the Grishanov-Vassiliev invariant. Finally, in Section \ref{sec_conc}, we state a problem on extending our approach to universal finite-type invariants of knots in thickened surfaces.

\subsection{Review of Virtual Knot Theory} \label{vkt}

\subsubsection{Gauss Diagrams and Virtual Knots} A virtual link is an equivalence class of virtual link diagrams \cite{kvirt}. Two virtual link diagrams are said to be equivalent if they are related by a finite sequence of the extended Reidemeister moves (see Figure \ref{fig:rmoves}). Let $\mathscr{K}$ denote the set of virtual knots (i.e. one component virtual links). The set of framed virtual knots is the set of equivalence classes of virtual knot diagrams under all extended Reidemeister moves except classical Reidemeister 1. Let $\mathscr{K}^{fr}$ denote this set of equivalence classes. 

\begin{figure}[htb] 
\[
\begin{array}{|c|} \hline
\epsfysize = 1.5 in
\epsffile{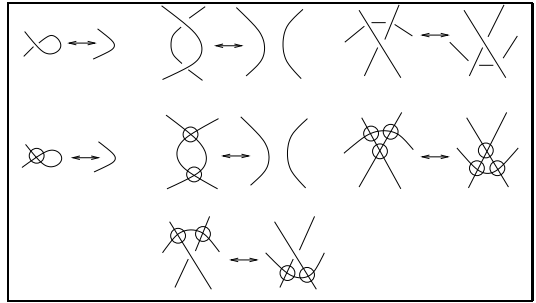}\\
\hline
\end{array}
\]
\caption{Reidemeister moves}
\label{fig:rmoves}
\end{figure}

\noindent Let $K$ be an oriented, virtual knot diagram. As such it is an immersion $K:S^1 \to \mathbb{R}^2$ such that each double point is drawn as a classical crossing or a virtual crossing. Draw a copy of $ S^1 $ in $ \mathbb{R}^2 $ that is oriented in the counterclockwise direction. The \emph{Gauss diagram} \cite{gpv} of $K$ is constructed from the immersion as follows. For each double point of the map that is a classical crossing, we connect the pre-images of $K$ in the circle by a chord. Each chord is directed as an arrow from the over-crossing to under-crossing arc. In addition, each chord is given a sign: $\oplus$ if the corresponding crossing is positively oriented and $\ominus$ if the corresponding crossing is negatively oriented. The directed, signed chords are referred to as \emph{arrows}.  We will denote by $G_K$ the Gauss diagram corresponding to $K$. Two Gauss diagrams are said to be equivalent if they are related by a finite sequence of diagrammatic moves. The diagrammatic moves are analogues of the Reidemeister moves. A sufficient set of these moves is shown in Figure \ref{fig:analog} \cite{gpv}.

\begin{figure}[htb] 
\[
\begin{array}{|c|c|} \hline
\begin{array}{ccc}
\multicolumn{3}{c}{\underline{\text{Reidemeister 1 Move:}}} \\
\begin{array}{c} \scalebox{.25}{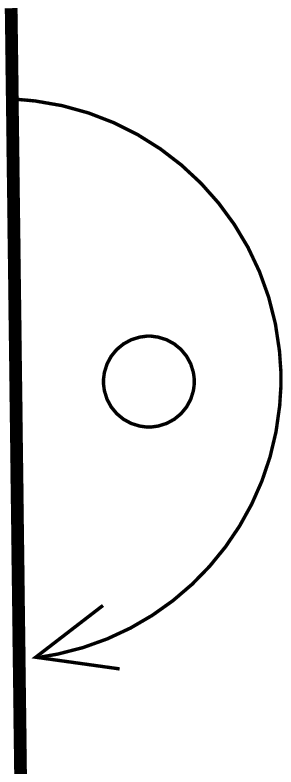} \end{array} & \leftrightarrow & \begin{array}{c} \scalebox{.25}{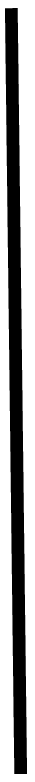} \end{array}
\end{array} &
\begin{array}{ccc}
\multicolumn{3}{c}{\underline{\text{Reidemeister 2 Move:}}} \\
\begin{array}{c} \scalebox{.25}{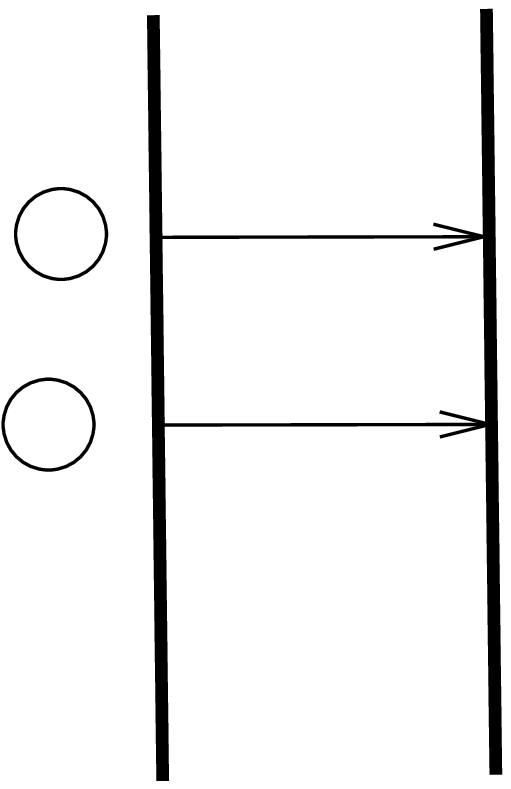} \end{array} & \leftrightarrow & \begin{array}{c} \scalebox{.25}{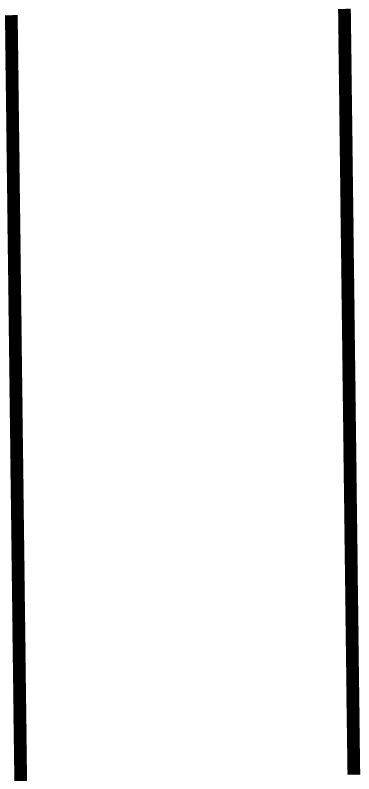} \end{array}
\end{array} \\ \hline
\multicolumn{2}{|c|}{
\begin{array}{ccc}
\multicolumn{3}{c}{\underline{\text{Reidemeister 3 Move:}}} \\
\begin{array}{c} \scalebox{.25}{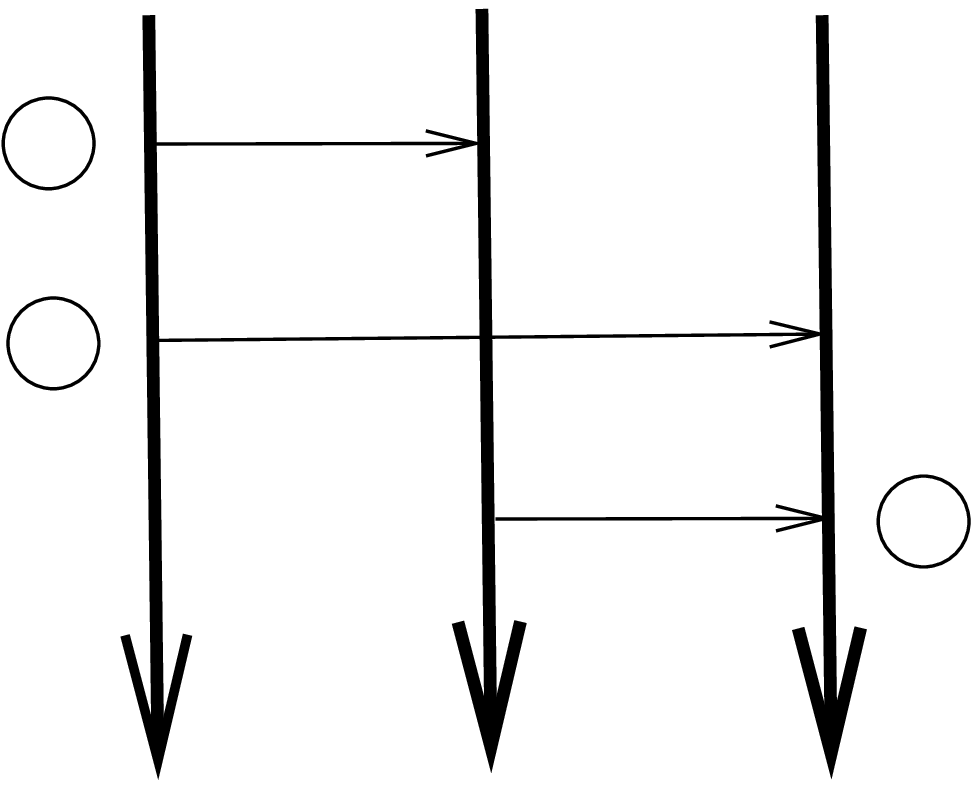} \end{array} &  \leftrightarrow & \begin{array}{c} \scalebox{.25}{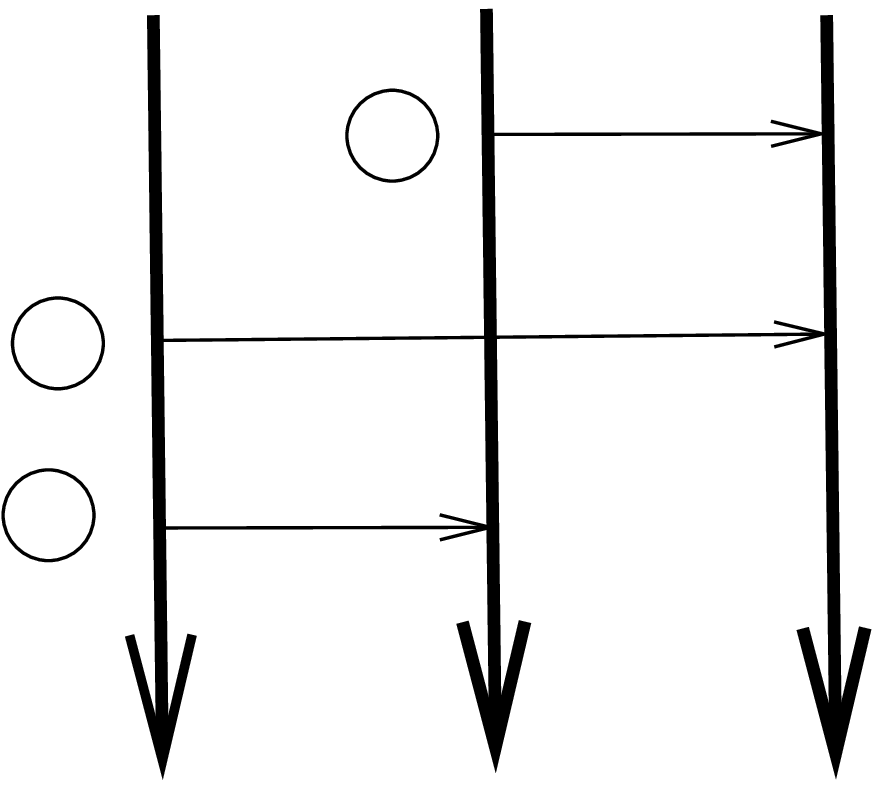} \end{array}
\end{array}} \\ \hline
\end{array}
\]
\caption{Reidemeister moves for Gauss Diagrams.} 
\label{fig:analog}
\end{figure}

\subsubsection{The Index and Smoothing}
We review the definition of the weight (also called the index) of a crossing. The weight of a crossing has been used to construct several invariants of virtual knots, (see \cite{cheng_knot,cheng_link,heather_linking,heather_smoothed,allison,affine_index,fol_kauff,im_lee}). The weight of a crossing is also related to the parity of a crossing \cite{paritymant}. By abuse of notation, we denote both a crossing in a knot diagram and the corresponding arrow in a Gauss diagram as $c$. Without loss of generality, we assume that the arrows are drawn as chords of a circle. Two arrows intersect if they intersect as figures in the interior of the disk.  Let $N_c$ denote the set of arrows that intersect the arrow $c$. Note that $ N_c$ may be the empty set. Given that the arrows are drawn on the interior of the diagram, the \textit{intersection number of $x$ with $c$} (denoted $ int_c (x)$) is defined diagrammatically in Figure \ref{fig:intx}. If the configuration of the two arrows is as appears on the left of Figure \ref{fig:intx}, then $int_c(x)=1$. If the configuration of the two arrows is as appears on the right of Figure \ref{fig:intx}, then $int_c(x)=-1$. 

\begin{figure}[htb] 
\scalebox{0.3}{
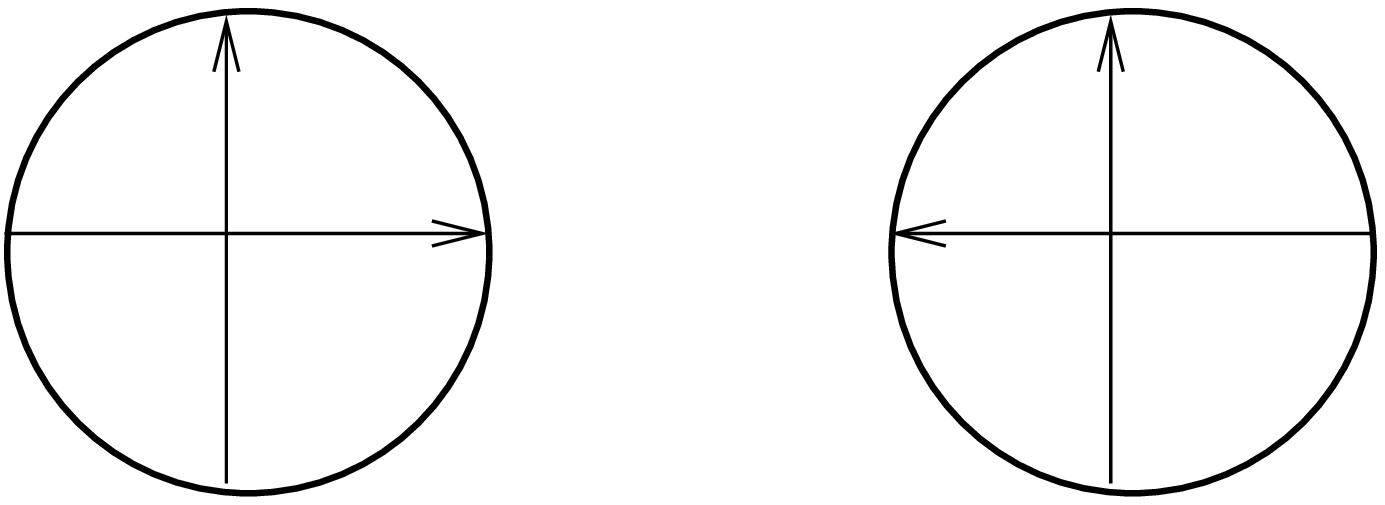}
\caption{The value of $ int_c (x) $}
\label{fig:intx}
\end{figure}
\hspace{1cm}
\newline
\newline
\noindent The \emph{weight} $w(c)$ of the crossing $c$ is defined to be:
\begin{equation}
w(c) = \sum_{x \in N_c} sign (x) \text{} int_c (x).
\end{equation}

\begin{proposition} \label{weightprop} Weights of crossings satisfy the following properties. 
\begin{enumerate}
\item The non-classical extended Reidemeister moves preserve all weights.
\item If a crossing is introduced via a Reidemeister 1 move, the weight of the introduced crossing is zero. 
\item For the two crossings depicted in a Reidemeister 2 move, the weight of the crossings are equal. Deleting the two crossings does not change the weights of any remaining crossings.
\item For the three crossings depicted in a Reidemeister 3 move, the weight of a crossing on the left hand side is the same as the weight of the corresponding crossing on the right hand side of the move. The weights of all other crossings are preserved.  
\end{enumerate}
\end{proposition}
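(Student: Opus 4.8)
The plan is to treat the weight $w(c)$ as a quantity read off the Gauss diagram alone, and to verify each item by tracking how the relevant chords---their endpoints, signs, arrow directions, and mutual intersections---are affected by the corresponding move. For (1), the point is that the non-classical extended Reidemeister moves (the purely virtual moves and the mixed move) only rearrange virtual crossings, which are invisible to the Gauss diagram. Hence the entire chord data is literally unchanged, so every $N_c$ and every $int_c(x)$ is unchanged and all weights are preserved. For (2), a Reidemeister 1 move introduces an isolated arrow whose two endpoints are adjacent on the circle; such a chord crosses no other chord, so for the new crossing $c$ we have $N_c = \emptyset$ and $w(c) = 0$ as an empty sum. (The same isolated arrow contributes $0$ to every other weight, so no other weight changes either.)

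For (3), I would use that the two arrows $c_1, c_2$ created by a Reidemeister 2 move are parallel chords pointing in the same direction and carrying opposite signs $\varepsilon$ and $-\varepsilon$, and that they do not cross each other. Parallelism forces any other arrow $x$ to cross both or neither, with $int_{c_1}(x) = int_{c_2}(x)$; since also $c_2 \notin N_{c_1}$ and $c_1 \notin N_{c_2}$, the defining sums for $w(c_1)$ and $w(c_2)$ agree term by term, giving $w(c_1) = w(c_2)$. For the deletion claim, the total contribution of $c_1$ and $c_2$ to the weight of any surviving arrow $y$ is $sign(c_1)\,int_y(c_1) + sign(c_2)\,int_y(c_2)$; parallelism gives $int_y(c_1) = int_y(c_2)$ while the signs are opposite, so the two terms cancel and removing $c_1, c_2$ leaves $w(y)$ unchanged.

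Part (4) is where the real work lies. First I would use locality: the three strands meet the move disk in three fixed sub-arcs of the circle, and the six endpoints of the three arrows all lie inside these sub-arcs, so no external endpoint is reordered relative to them. Consequently every external arrow crosses each of the three arrows in exactly the same way before and after, and---since the three crossings manifestly keep their signs and over/under directions---the external part of each weight is preserved. This reduces the claim to the internal contributions, i.e.\ to how the three arrows intersect one another. The subtlety here is that the order of the two crossing points along each strand is reversed by the move, so the mutual crossing pattern of the three chords genuinely changes: a given pair may interleave before the move but not after, and conversely. The hard part will be the sign bookkeeping: I expect to verify, by a direct finite case analysis using the convention of Figure \ref{fig:intx} together with the arrow directions, that for each of the three crossings the gained and lost internal terms match up (for instance, that the two internal terms of the middle crossing cancel on the side where both are present, while the single internal terms of the outer crossings agree across the move).

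A cleaner, more conceptual route---suggested by the title of this subsection---would be to reinterpret $w(c)$ via smoothing: the oriented smoothing at $c$ splits the knot into two arcs, the arrows in $N_c$ are exactly the crossings between the two resulting pieces, and $w(c)$ is the associated linking-type number weighted by $sign$ and $int_c$. Its invariance under Reidemeister 2 and 3 then follows from the invariance of that linking number, collapsing the delicate case analysis into a single well-understood quantity. I would present the direct combinatorial verification as the proof and remark on the smoothing interpretation as the organizing principle behind it.
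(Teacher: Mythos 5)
The paper does not actually prove this proposition---it simply cites \cite{affine_index, heather_smoothed}---so your direct verification from the Gauss-diagram definition is a legitimate, self-contained alternative, and your treatment of items (1) and (2) is complete and correct. There are, however, two gaps. First, in item (3) your premise that the two arrows of a Reidemeister 2 move are non-crossing parallel chords is only true for the variant in which the two strands are anti-parallel. When the strands are coherently oriented, the two crossing points occur in the same order along both strands, the four endpoints interleave on the circle as $p_1,q_1,p_2,q_2$, and the two chords \emph{do} intersect; then $c_2\in N_{c_1}$ and your term-by-term comparison does not apply as written. The conclusion still holds---the two mutual terms $sign(c_2)\, int_{c_1}(c_2)$ and $sign(c_1)\, int_{c_2}(c_1)$ coincide because $int_{c}(x)=-int_{x}(c)$ while $sign(c_2)=-sign(c_1)$, and external chords still meet both arrows or neither, with equal local intersection signs---but this case must be argued separately (or you must restrict to a generating set of oriented moves, which changes what is being proved from ``the weights are equal in every Reidemeister 2 move'' to a mere invariance statement).

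Second, and more seriously, item (4) is the only part of the proposition with genuine content, and you do not prove it. Your reduction is correct: external arrows contribute identically on both sides, each of the three pairs of chords toggles its intersection status under the move, the ``middle'' crossing acquires two internal terms on one side and none on the other (so those two must cancel), and each ``outer'' crossing has exactly one internal term on each side (so those must agree). But the verification of these identities is announced (``I expect to verify'') rather than performed, and it is precisely here that the sign bookkeeping---including the constraint relating the three crossing signs in a coherent Reidemeister 3 configuration---must actually be exhibited. As written this part is a plan, not a proof. The identities do hold, and your closing observation that $w(c)$ is the algebraic intersection number of the two state curves obtained by smoothing at $c$ is indeed the clean way to see (2)--(4) at once (it is essentially how the paper itself treats these quantities via intersection theory in Section~\ref{sec_gvinariant}); carrying that reformulation through would have discharged item (4) completely and would be the preferable way to finish.
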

\begin{proof} See \cite{affine_index, heather_smoothed}. 
\end{proof} 

\noindent Lastly, we define the \emph{oriented smoothing of a crossing}. The oriented smoothing of a crossing is the diagrammatic modifcation in a small ball in $\mathbb{R}^2$ depicted on the left hand side of Figure \ref{fig:vertsmooth}. By the \emph{oriented smoothing of an arrow} in a Gauss diagram, we mean the diagrammatic modification in $D^2$ depicted on the right hand side of Figure \ref{fig:vertsmooth}. 

\begin{figure}[htb]
\[
\begin{array}{|c|c|} \hline
\xymatrix{\begin{array}{c} \scalebox{.2}{\psfig{figure=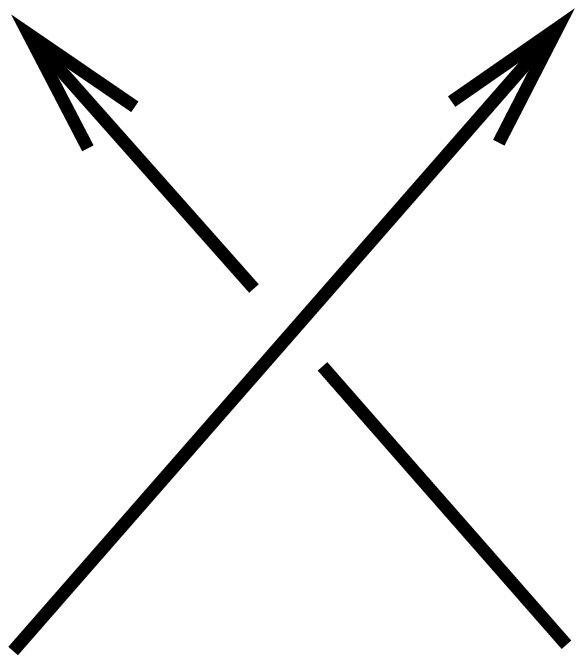}} \end{array} \ar[r] & \begin{array}{c} \scalebox{.2}{\psfig{figure=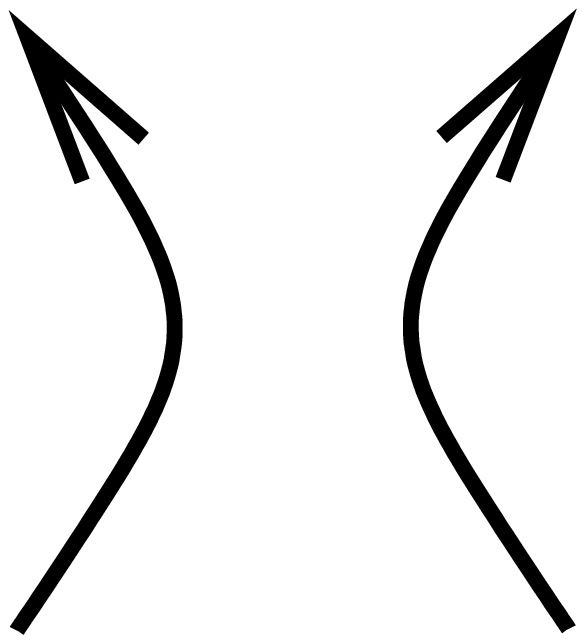}} \end{array} & \ar[l] \begin{array}{c} \scalebox{.2}{\psfig{figure=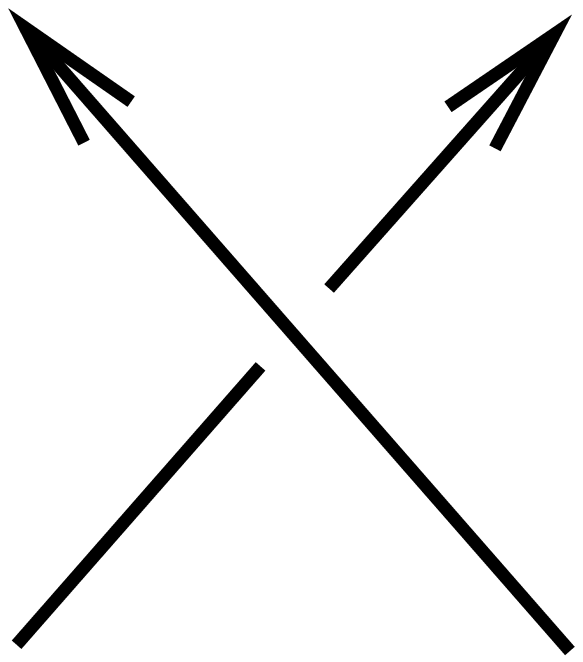}} \end{array}} &  \xymatrix{\begin{array}{c} \scalebox{.2}{\psfig{figure=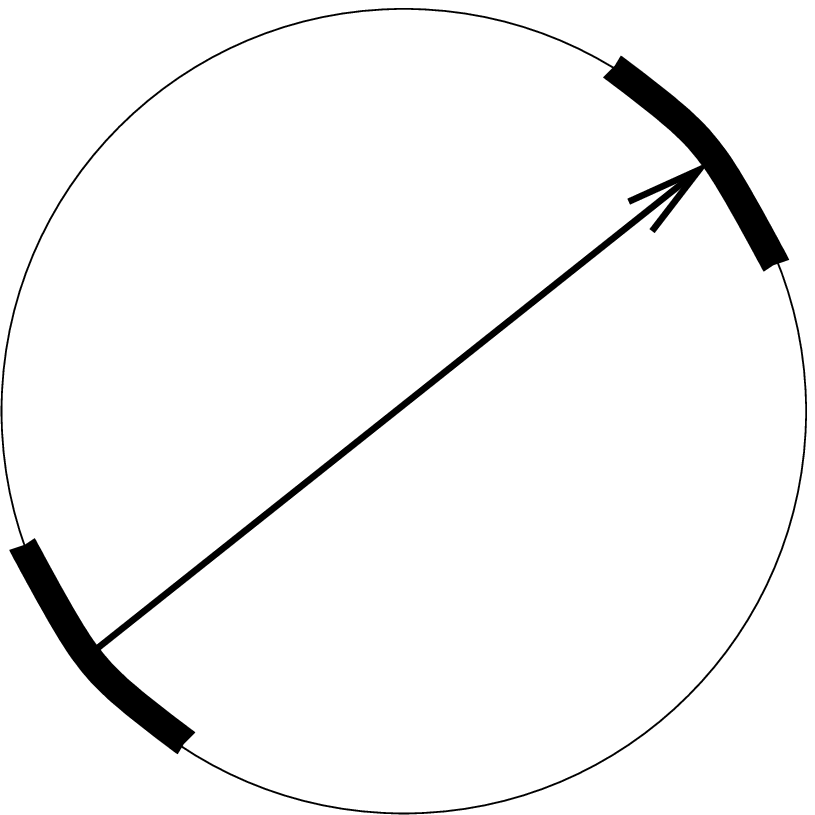}} \end{array} \ar[r] & \begin{array}{c} \scalebox{.2}{\psfig{figure=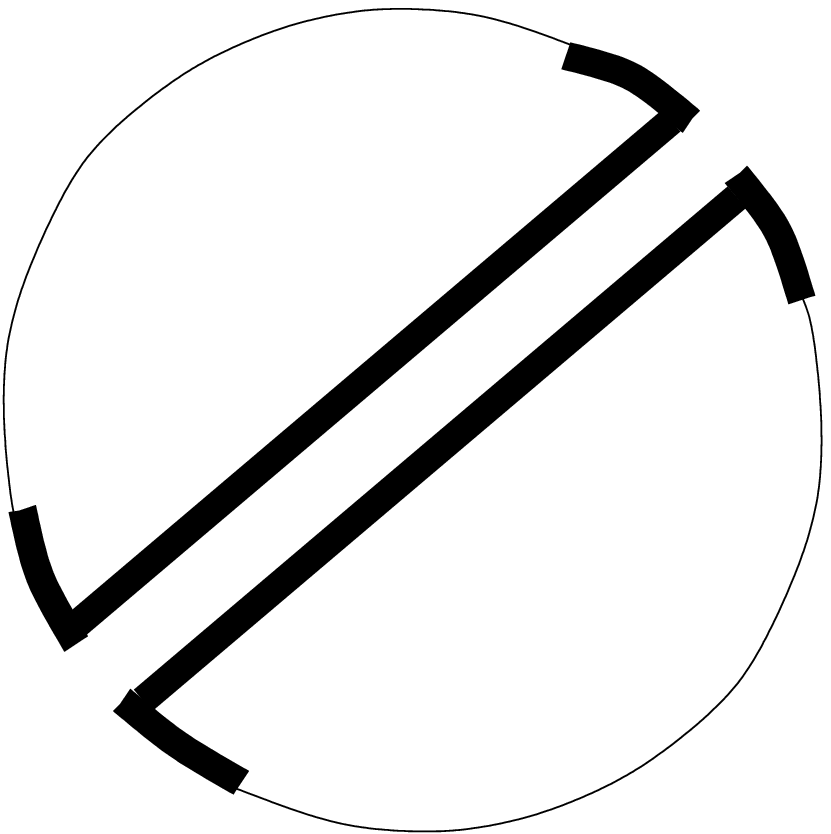}} \end{array}}\\ \hline
\end{array}
\] 
\caption{Oriented smoothing of a crossing (left) and oriented smoothing of an arrow (right).}
\label{fig:vertsmooth}
\end{figure}

\section{The Three Loop Isotopy Invariant} \label{sec_phi_ijk}

\noindent The three loop invariant is an invariant which is given by a Gauss diagram formula \cite{pv} with labeled pairs of regions (see below for definition). Much work has been done on finding Gauss diagram formulae for classical and virtual knot invariants \cite{chmutov_polyak,ckr,micah_mant,gibson_ito,BP,fol_kauff}. The aim of this section is to define the three loop invariant, prove that it is an invariant of virtual knots, and provide some examples of its computation. 

\subsection{Definition of $\phi_{i,j,k}$} Let $D$ be a Gauss diagram of a virtual knot $K$.  Let $D_{||}'$ be a subdiagram of $D$ having two non-intersecting arrows. Let $A_1$, $A_2$, and $A_3$ be the connected components of the Gauss diagram after the oriented smoothing along both of the arrows of $D_{||}'$. Let $C_{i,j}$ denote the set of arrows with one endpoint in $A_i$ and one endpoint in $A_j$. If $A_i$ and $A_j$ are separated be a single arrow, let $a_{ij}$ denote the arrow which separates them.  If $A_i$ and $A_j$ are not separated by an arrow, let $a_{ij}$ denote either of the two arrows. Define:
\[
w_{ij}=\left|\sum_{c \in C_{i,j}} sign(c) \text{ }int_{a_{ij}}(c) \right|.
\] 
Note that $w_{i,j}=w_{j,i}$ and that $w_{i,j}$ is independent of the choice of $a_{ij}$.
\newline
\newline
We assign the weights $w_{ij}$ to the pairs of regions of $D_{||}'$ according to the corresponding diagram in Figure \ref{fig_region_label}. Let $F_{i,j,k}$ denote the formal sum of Gauss diagrams in Figure \ref{fig_phi_ijk_defn}. Define $sign(D_{||}')$ to be the product of the signs of two arrows of $D_{||}'$. Let $\left<F_{i,j,k},D_{||}'\right>=sign(D_{||}')$ if $D_{||}'$ is equivalent as a Gauss diagram with labeled pairs of regions to one of the diagrams in the formal sum $F_{i,j,k}$. Otherwise, $\left<F_{i,j,k},D_{||}'\right>=0$.
The \emph{three loop invariant} is defined to be:
\[
\phi_{i,j,k}(K)=\sum_{D_{||}' \subset D} \left<F_{i,j,k},D_{||}'\right>.
\]
 
\begin{figure}[htb]
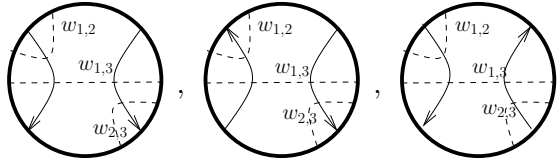

\[
\begin{array}{c}
\scalebox{.4}{\input{reg_lab_1.pstex_t}}
\end{array},\begin{array}{c}
\scalebox{.4}{\input{reg_lab_2.pstex_t}}
\end{array},\begin{array}{c}
\scalebox{.4}{\input{reg_lab_3.pstex_t}}
\end{array}
\]
\caption{The assignment of weights to pairs of regions in the Gauss diagram.} \label{fig_region_label}
\end{figure}

\begin{figure}[htb]
\[
F_{i,j,k}=\begin{array}{c}
\scalebox{.4}{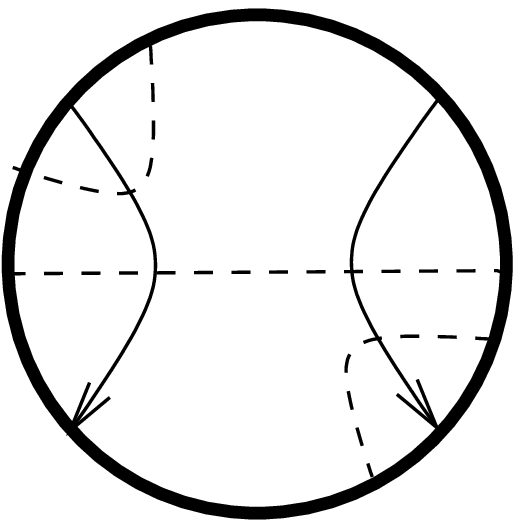}
\end{array}+\begin{array}{c}
\scalebox{.4}{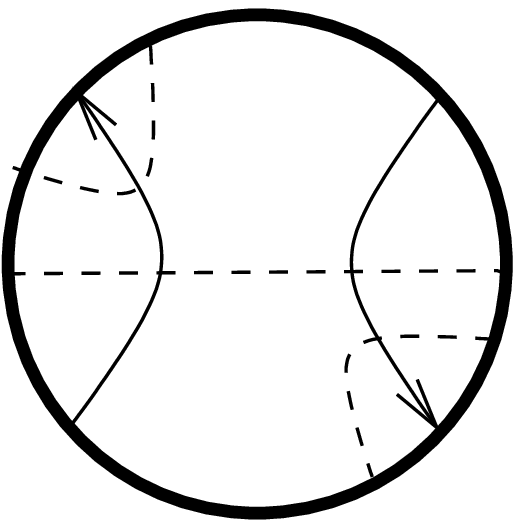}
\end{array}+\begin{array}{c}
\scalebox{.4}{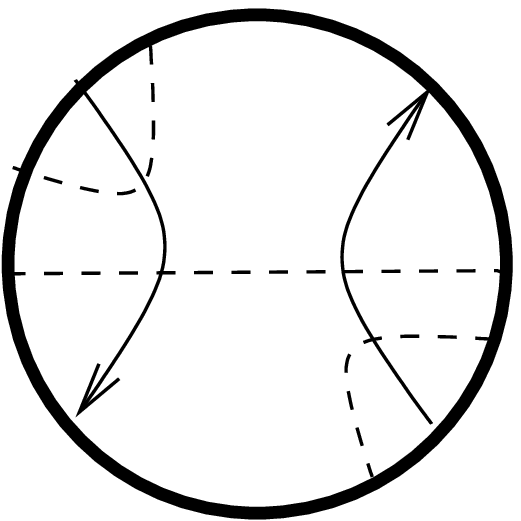}
\end{array}
\]
\caption{The formal sum defining the virtual knot invariant $\phi_{i,j,k}$.}\label{fig_phi_ijk_defn}
\end{figure}

\begin{theorem} \label{lemm_phi_ijk} Let $i,j,k \in \mathbb{N} \cup \{0\}$ be distinct non-negative integers (i.e. $i \ne j \ne k \ne i$).  Then $\phi_{i,j,k}:\mathbb{Z}[\mathscr{K}] \to \mathbb{Z}$ is an invariant of virtual knots.
\end{theorem}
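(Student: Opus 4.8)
The plan is to verify that $\phi_{i,j,k}$ is preserved by each of the Gauss diagram Reidemeister moves of Figure \ref{fig:analog}, which suffices because $\phi_{i,j,k}$ depends only on the abstract signed, directed chord diagram of $K$. The purely virtual and mixed (detour) moves leave this chord diagram completely unchanged, so they preserve $\phi_{i,j,k}$ trivially. For each of the three classical moves I would partition the two-arrow subdiagrams $D_{||}' \subset D$ according to how many of the arrows altered by the move they contain, and show that the total of $\left<F_{i,j,k},D_{||}'\right>$ is unchanged class by class. Throughout, the key recurring observation is that a smoothed configuration in which two of the three weights $w_{1,2},w_{1,3},w_{2,3}$ coincide cannot match any summand of $F_{i,j,k}$, precisely because $i \ne j \ne k \ne i$.

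For the Reidemeister 1 move the introduced arrow $c$ is isolated, i.e. $N_c = \emptyset$. Any subdiagram $D_{||}'$ containing $c$ smooths to a three-loop configuration in which the loop cut off by $c$ carries no arrow endpoints, so the two weights joining that loop to the others vanish; by distinctness of $i,j,k$ the resulting labeled diagram is equivalent to no summand of $F_{i,j,k}$, whence $\left<F_{i,j,k},D_{||}'\right> = 0$. For a subdiagram not containing $c$, the two endpoints of $c$ are adjacent on the circle and hence remain in a single smoothed component, so $c$ lies in no $C_{i,j}$ with $i \ne j$ and alters no weight. Thus $\phi_{i,j,k}$ is unchanged.

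For the Reidemeister 2 move the two arrows $c_1, c_2$ have opposite signs and bound an empty bigon. The subdiagram $\{c_1,c_2\}$ itself smooths to a configuration with an empty central loop, again producing two equal (indeed vanishing) weights and so contributing $0$. Because the bigon is empty, each endpoint of $c_1$ is adjacent to the corresponding endpoint of $c_2$, so every other arrow crosses $c_1$ exactly when it crosses $c_2$; hence subdiagrams using exactly one of $c_1, c_2$ occur as pairs $\{c_1,c'\}, \{c_2,c'\}$. In each such pair the unused parallel arrow lies entirely within one smoothed component and therefore affects no weight, so the two labeled three-loop configurations are identical; since $sign(c_1) = -sign(c_2)$, their contributions are $\pm sign(c')$ and cancel. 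Finally, for a purely external pair the arrows $c_1$ and $c_2$ either both sit in one smoothed component or both join the same pair $A_i, A_j$ with equal intersection numbers and opposite signs, so their combined contribution to every $w_{i,j}$ is zero and all weights are preserved.

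The Reidemeister 3 move is the main obstacle. Here three arrows are rearranged with their signs preserved, and I would again split $D_{||}'$ by the number of triangle arrows it contains. For purely external pairs I would check that each $w_{i,j}$ is preserved: the move is local and, by Proposition \ref{weightprop}, it preserves the signed intersection numbers that enter each weight, while the connectivity of the smoothed external components is untouched. The essential work is the finite case analysis for the subdiagrams containing one or two of the triangle arrows, where one tracks how the six endpoints permute across the move, identifies the admissible non-intersecting pairs and their smoothed labeled three-loop configurations on each side, and matches them, with signs, summand by summand against $F_{i,j,k}$. I expect this enumeration — verifying that the weighted, signed contributions of the triangle pairs agree before and after — to be the technically involved step, the other moves being comparatively routine.
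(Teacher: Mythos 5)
Your proposal is correct and follows essentially the same route as the paper: invariance is checked move by move on Gauss diagrams, two-arrow subdiagrams are partitioned by how many moved arrows they contain, degenerate smoothings are excluded because two of the three weights coincide while $i \ne j \ne k \ne i$, and the Reidemeister 2 contributions cancel in sign pairs. The only practical difference is that the paper tames the ``technically involved'' Reidemeister 3 enumeration you defer by invoking \"{O}stlund's theorem to reduce to a single oriented move (and its crossing-switched version), so that the two-triangle-arrow case collapses to verifying that $F_{i,j,k}$ vanishes on the two explicit three-term relations of Figure \ref{fig_phi_ijk_omega3}.
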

\begin{proof} It will be shown that $\phi_{i,j,k}$ is invariant under the three Reidemeister moves. 
\newline
\newline
Let $D$ be a Gauss diagram of a virtual knot $K$ representing the left hand side of a Reidemeister move in Figure \ref{fig:analog}. Let $D_{||}'$ be a labeled subdiagram of $D$ consisting of two non-intersecting arrows.
\newline
\newline
\underline{Reidemeister 1:} If none of the arrows of $D_{||}'$ participates in the move, then $D_{||}'$ is a subdiagram of the Gauss diagram of the right hand side of the move. If one of the arrows participates in an Reidemeister $1$ move, it follows that two of $w_{1,2}$, $w_{1,3}$, $w_{2,3}$ are zero.  Since at most one of $i,j,k$ is zero, it follows that $D_{||}'$ is not counted by $F_{i,j,k}$. Since $D_{||}'$ does not appear on the right hand side of the Reidemeister move, it follows that $\phi_{i,j,k}$ is invariant under the Reidemeister 1 move.
\newline
\newline
\underline{Reidemeister 2:} Recall that the weight of an arrow which is not involved in a Reidemeister move is unchanged by the move. Thus, if $D_{||}'$ contains none of the arrows in the move, it follows that $D_{||}'$ also appears as a subdiagram on the right hand side of the move. If one of the the arrows of $D_{||}'$ is involved in the move then there is a subdiagram $D_{||}''$ also on the left hand side having the same labels but with opposite sign. Thus $\left<F_{i,j,k},D_{||}' \right>+\left<F_{i,j,k},D_{||}'' \right>=0$.
\newline
\newline
Suppose then that both arrows of $D_{||}'$ are involved in the move.  Since the arrows of $D_{||}'$ are non-intersecting, smoothing along both crossings yields three curves, one of which is a simple closed curve having no intersections with the other two curves.  Hence, two of the weights $w_{1,2}$, $w_{1,3}$, $w_{2,3}$ are zero. Since at most one of $i,j,k$ is zero it follows that $\left<F_{i,j,k},D_{||}' \right>=0$. Thus, $\phi_{i,j,k}$ is invariant under the Reidemeister 2 move.
\newline
\newline
\underline{Reidemeister 3:} Since $\phi_{i,j,k}$ is invariant under all Reidemeister 1 and 2 moves, it is sufficient to consider only the Reidemeister move depicted in Figure \ref{omega3} and the Reidemeister move obtained from Figure \ref{omega3} by switching the three drawn crossings \cite{Ost}.
\newline
\newline
It follows by an argument similar to that given in the case of the Reidemeister 2 move that there is a one-to-one correspondence between subdiagrams $D_{||}'$ of the left hand side and subdiagrams $D_{||}''$ of the right hand side of the move when $D_{||}'$ contains zero or one of the arrows involved in the move. Therefore, it suffices to consider only the case that both arrows of $D_{||}'$ are involved in the move. The previous observations imply that the Reidemeister 3 move corresponds to the vanishing of $F_{i,j,k}$ on the two relations depicted in Figure \ref{fig_phi_ijk_omega3}. Checking all possibilities and noting that $i \ne j \ne k \ne i$, we see that $F_{i,j,k}$ vanishes on both relations.  Hence, $\phi_{i,j,k}$ is invariant under the Reidemeister 3 move.
\end{proof}

\begin{figure}[htb]
\[
\begin{array}{ccc}
\begin{array}{c} \psfig{figure=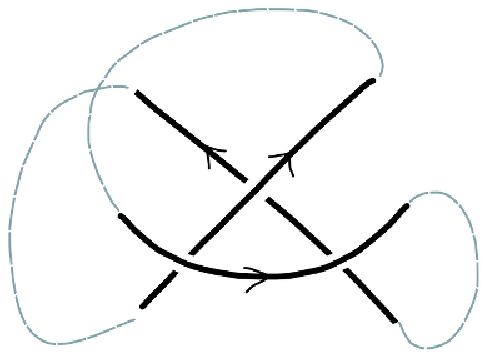} \end{array} & \leftrightharpoons & \begin{array}{c} \psfig{figure=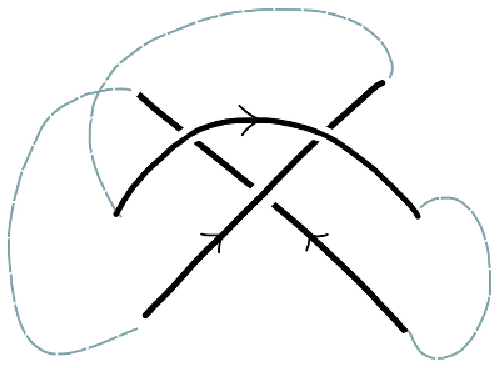} \end{array}
\end{array}
\]
\caption{A Reidemeister $3$ move.}\label{omega3}
\end{figure}

\begin{figure}
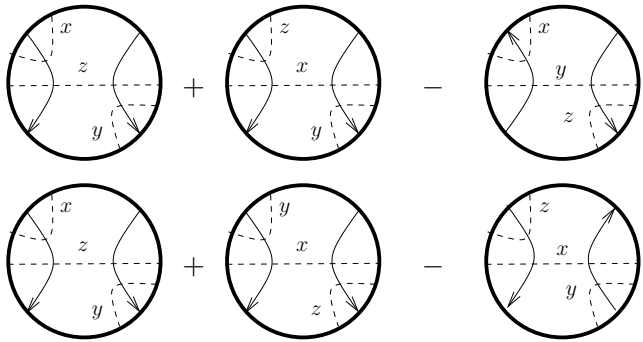

\begin{eqnarray*}
\begin{array}{c}
\scalebox{.4}{\input{phi_omega_rels_1_1.pstex_t}}
\end{array}+\begin{array}{c}
\scalebox{.4}{\input{phi_omega_rels_1_2.pstex_t}}
\end{array} &-& \begin{array}{c}
\scalebox{.4}{\input{phi_omega_rels_1_3.pstex_t}} \end{array} \\
\begin{array}{c}
\scalebox{.4}{\input{phi_omega_rels_1_1.pstex_t}}
\end{array}+\begin{array}{c}
\scalebox{.4}{\input{phi_omega_rels_2_2.pstex_t}}
\end{array}       &-&  \begin{array}{c}
\scalebox{.4}{\input{phi_omega_rels_2_3.pstex_t}} \end{array}\\ 
\end{eqnarray*}
\caption{Two formal sums of subdiagrams which appear in Reidemeister 3 moves.}\label{fig_phi_ijk_omega3}
\end{figure}

\subsection{Example} A Gauss diagram and a corresponding virtual knot diagram $K$ are shown in Figure \ref{fig:loopexample}. A straightforward computation verifies that $ \phi_{1,0,2} (K) = \phi_{1,2,0}(K)=1$. On the other hand, $\phi_{2,0,1}(K)=0$.

\begin{figure}[htb] 
\scalebox{0.5}{
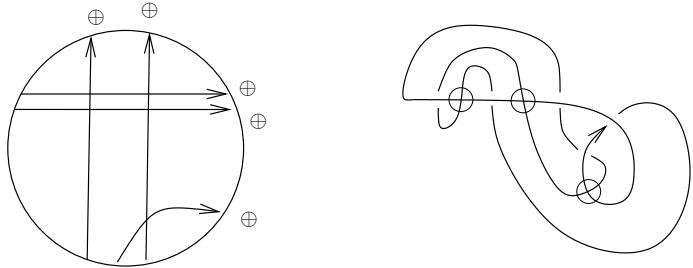}
\caption{The virtual knot $K$ satisfies $ \phi_{1,0,2}(K)=\phi_{1,2,0}(K)=1$.}
\label{fig:loopexample}
\end{figure}
 
\section{The Three Loop Framed Isotopy Invariant} \label{sec_phi}

\noindent In this section we introduce the three loop framed isotopy invariant $\Phi^{fr}$ of virtual knots. The invariant is valued in an abelian group $A$. We begin in Section \ref{sec_group_a} with a presentation of $A$. The presentation is useful for establishing the properties of $\Phi^{fr}$ (see Section \ref{sec_properties}).  However, the presentation is not always the most useful for the purpose of comparing values of the invariant.  We prove that $A$ is isomorphic to the direct sum of infinitely many copies of $\mathbb{Z}_2$. Using this isomorphism, it is easy to distinguish between the values of $\Phi^{fr}$ in the group $A$.
\newline
\newline
The function $\Phi^{fr}$ is proved to be a framed isotopy invariant in Section \ref{sec_phi}. A particularly interesting property of $\Phi^{fr}$ is that it can also be used as in invariant of knots.  More precisely, if $K_1$ and $K_2$ are equivalent virtual knot diagrams having the same writhe modulo 2, then $\Phi^{fr}(K_1)=\Phi^{fr}(K_2)$ (proved in Section \ref{sec_frame_virt_isotopy}). This makes $\Phi^{fr}$ useful in applications such as detecting non-invertible virtual knots and virtual knots which are not equivalent to their mirror images. This will be explored in more detail in Section \ref{sec_geometric}.

\subsection{The Group $A$} \label{sec_group_a} Let $A $ denote the group generated by the set of symbols $\{A_{ij}:i,j \in \mathbb{Z} \}$ and having the following relations:
\begin{align*}
A_{ii} &=1,  &  
A_{ij} ^2 &= 1, \\
A_{ij} A_{lm} &= A_{lm} A_{ij},  &
A_{ij}A_{jk} & = A_{ik}.
\end{align*}

\noindent The given presentation of $A$ is useful for establishing the properties of the three loop framed isotopy invariant. To aid in computation of the knot invariant, we will prove that $A$ is isomorphic to a direct sum of infinitely many copies of $\mathbb{Z}_2$. First, we need the following lemma.

\begin{lemma} For all $i,j \in \mathbb{Z}$, $A_{ij}=A_{ji}$.
\end{lemma}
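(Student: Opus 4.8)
The plan is to derive the symmetry $A_{ij}=A_{ji}$ directly from the four defining relations, using only elementary manipulations in the group $A$. The key is to exploit the ``concatenation'' relation $A_{ij}A_{jk}=A_{ik}$ together with the facts that each generator is an involution ($A_{ij}^2=1$) and that diagonal generators are trivial ($A_{ii}=1$). First I would record the immediate consequence that each $A_{ij}$ is its own inverse, so $A_{ij}^{-1}=A_{ij}$.

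The main step is a short computation. Applying the concatenation relation with the middle index chosen so that the two factors collapse to a diagonal generator, I compute
\[
A_{ij}A_{ji}=A_{ii}=1.
\]
Thus $A_{ji}$ is a two-sided inverse of $A_{ij}$. But we have just observed that $A_{ij}$ is its own inverse, namely $A_{ij}A_{ij}=A_{ij}^2=1$. Since inverses in any group are unique, comparing $A_{ij}A_{ji}=1$ with $A_{ij}A_{ij}=1$ forces $A_{ji}=A_{ij}$, which is the claim.

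I do not expect any genuine obstacle here; the statement is a formal consequence of the presentation and the argument is a one-line uniqueness-of-inverses observation. The only point requiring minor care is to confirm that the instance $A_{ij}A_{ji}=A_{ii}$ of the concatenation relation is legitimately an instance of the stated relation $A_{ij}A_{jk}=A_{ik}$ (here with $k=i$), and then to invoke $A_{ii}=1$. The commutativity relation $A_{ij}A_{lm}=A_{lm}A_{ij}$ is not actually needed for this particular lemma, though it will presumably be used elsewhere in establishing the structure of $A$.
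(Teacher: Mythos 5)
Your proof is correct and follows essentially the same argument as the paper: both use $A_{ij}^2=1$ to get $A_{ij}^{-1}=A_{ij}$, apply the concatenation relation with $k=i$ to obtain $A_{ij}A_{ji}=A_{ii}=1$, and conclude by uniqueness of inverses. No issues.
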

\begin{proof} Since $A_{ij}^2=1$, we have that $A_{ij}^{-1}=A_{ij}$.  Moreover, we have that $A_{ij}A_{ji}=A_{ii}=1$.  Thus, $A_{ji}=A_{ij}^{-1}=A_{ij}$.
\end{proof}

\begin{theorem} There is a group isomorphism $\displaystyle{f:A \to \mathbb{Z}_2^{\infty}=\bigoplus_{i=-\infty}^{\infty} \mathbb{Z}_2}$.
\end{theorem}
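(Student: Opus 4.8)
The plan is to exhibit an explicit isomorphism onto $\bigoplus_{i\neq 0}\mathbb{Z}_2$, which is itself isomorphic to $\mathbb{Z}_2^\infty$ since the index set $\mathbb{Z}\setminus\{0\}$ is countably infinite. First I would record two structural observations about $A$. The commutation relations $A_{ij}A_{lm}=A_{lm}A_{ij}$ make $A$ abelian, and since every generator satisfies $A_{ij}^2=1$, every element of $A$ has order dividing $2$; hence $A$ is a vector space over $\mathbb{Z}_2$ and it suffices to produce a $\mathbb{Z}_2$-basis. Next I would identify a convenient spanning set. Setting $B_i:=A_{0i}$, the relation $A_{ij}A_{jk}=A_{ik}$ with $j=0$, together with the lemma $A_{i0}=A_{0i}$, gives $A_{ij}=A_{i0}A_{0j}=B_iB_j$; since $B_0=A_{00}=1$, the set $\{B_i : i\in\mathbb{Z},\ i\neq 0\}$ spans $A$.

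The heart of the argument is to show this spanning set is linearly independent, which I would do by constructing a homomorphism into a space where the images are visibly a basis. Let $W=\bigoplus_{i\in\mathbb{Z}\setminus\{0\}}\mathbb{Z}_2$ with standard basis $\{e_i : i\neq 0\}$, and adopt the convention $e_0=0$. Define $f$ on generators by $f(A_{ij})=e_i+e_j$. To see that $f$ extends to a homomorphism $A\to W$ I would check that it respects each of the four defining relations: $A_{ii}\mapsto e_i+e_i=0$ and $A_{ij}^2\mapsto 2(e_i+e_j)=0$ hold because $W$ has exponent $2$; the commutation relations hold because $W$ is abelian; and the cocycle relation holds since $f(A_{ij})+f(A_{jk})=(e_i+e_j)+(e_j+e_k)=e_i+e_k=f(A_{ik})$, valid in every index case once the convention $e_0=0$ is used. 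These checks are routine.

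With $f$ in hand the conclusion is quick. By construction $f(B_i)=f(A_{0i})=e_0+e_i=e_i$, so $f$ carries the spanning set $\{B_i\}_{i\neq 0}$ bijectively onto the basis $\{e_i\}_{i\neq 0}$ of $W$; in particular $f$ is surjective. For injectivity, any $x\in A$ can be written $x=\prod_{i\in T}B_i$ for a finite $T\subseteq\mathbb{Z}\setminus\{0\}$, whence $f(x)=\sum_{i\in T}e_i$, and this vanishes only when $T=\emptyset$, i.e. $x=1$. Thus $f$ is an isomorphism $A\cong W\cong\mathbb{Z}_2^\infty$. I expect the only real subtlety to be the well-definedness check for $f$, specifically the case analysis in the cocycle relation when an index equals the basepoint $0$; everything else is bookkeeping. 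A sanity check worth noting is that defining $f$ into the full sum $\bigoplus_{i\in\mathbb{Z}}\mathbb{Z}_2$ would make its image the codimension-one ``even weight'' subspace rather than the whole group, so collapsing the basepoint coordinate via $e_0=0$ is precisely what makes the map surjective and keeps the argument clean.
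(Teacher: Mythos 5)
Your proof is correct, but it is a genuinely different construction from the paper's. The paper fixes the generating set $\{A_{i,i+1}\}_{i\in\mathbb{Z}}$ and sends $A_{ij}$ (for $i<j$) to the ``interval'' vector with ones in positions $i,i+1,\dots,j-1$ of $\bigoplus_{i\in\mathbb{Z}}\mathbb{Z}_2$; it then verifies the relations by a case analysis on the orderings of $i,j,k$ and establishes injectivity by writing down an inverse on generators. You instead exploit the observation that $A$ is an abelian group of exponent $2$, hence a $\mathbb{Z}_2$-vector space, take the spanning set $B_i=A_{0i}$ coming from $A_{ij}=A_{i0}A_{0j}$, and map $A_{ij}\mapsto e_i+e_j$ into $\bigoplus_{i\neq 0}\mathbb{Z}_2$ with the convention $e_0=0$. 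This buys you a cleaner well-definedness check (the cocycle relation $f(A_{ij})+f(A_{jk})=f(A_{ik})$ holds identically with no case analysis, since the $e_j$ terms cancel), and linear independence of the $B_i$ is immediate because their images are a basis; the paper's inverse-construction step is replaced by the standard trivial-kernel argument. Your closing remark about the even-weight subspace is exactly the right sanity check. One caveat worth flagging: the theorem only asserts existence of an isomorphism, so your $f$ suffices here, but the paper's \emph{specific} $f$ (with $f(A_{i,i+1})=e_i$) is referenced downstream, e.g.\ in the remark following the theorem, in the proposition that $f\circ\Phi^{fr}(K)$ has an even number of nonzero entries, and in the shift map $m$ used for geometric symmetries; those later statements would need to be restated if your isomorphism were substituted for theirs.
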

\begin{proof} Enumerate the copies of $\mathbb{Z}_2$ in $\mathbb{Z}_2^{\infty}$ as $\{\ldots,-1,0,1,\ldots\}$. Let $\vec{0}$ denote the identity element of $\mathbb{Z}_2^{\infty}$. We define a function $f:A \to \mathbb{Z}_2^{\infty}$ as follows. By the previous lemma, it is sufficient to define $f$ on generators $A_{ij}$ where $i \le j$. Define $f(A_{ii})=\vec{0}$ for all $i\in \mathbb{Z}$.  For all $i\in \mathbb{Z}$, define $f(A_{i,i+1})$ to be the element which has a $1$ in the $i$-th copy of $\mathbb{Z}_2$ and a zero in all other copies of $\mathbb{Z}_2$. For $i<j-1$, define $f(A_{ij})$ to be the element which has a one in positions $i,i+1,i+2,\ldots,j-1$ and zeros in all other positions. If $i>j$, define $f(A_{ij})=f(A_{ji})$. This defines $f$ on generators.
\newline
\newline
\noindent To show that $f$ is well-defined, it is sufficient to show that $f(R)=\vec{0}$ for all defining relations $R$ of $A$. By definition $f(A_{ii})=\vec{0}$.  Also, $f(A_{ij}^2)=f(A_{ij})+f(A_{ij})=2(f(A_{ij}))=\vec{0}$. Finally, note that for $i<j<k$,  $f(A_{ij})+f(A_{jk})$ is a string of ones beginning in the $i$-th copy of $\mathbb{Z}_2$ and ending in the $(k-1)$-th copy of $\mathbb{Z}_2$. As this element is $f(A_{ik})$, we see that $f(A_{ij}A_{jk} A_{ik}^{-1})=\vec{0}$. If $j<i<k$, then $f(A_{ij})+f(A_{jk})=f(A_{ji})+f(A_{jk})$. Computing $f(A_{ji})+f(A_{jk})$ we see that we get zeros from $-\infty$ to $i-1$, ones from $i$ to $k-1$, and zeros from $k$ to $\infty$.  This is $f(A_{ik})$, so it follows again that $f(A_{ij}A_{jk} A_{ik}^{-1})=\vec{0}$. There are some other cases to consider and each follows from a similar argument. Thus, $f$ is well defined.
\newline
\newline
\noindent Next we show that $f$ is onto. An element $y$ of $\mathbb{Z}_2^{\infty}$ has only finitely many entries which are non-zero.  In particular, it can be broken into finitely many ``runs of ones''.  If there is a one going from the $i$-th to the $k$-th position and zero in the $i-1$ and $k+1$ position, it is the image of $A_{i,k+1}$. Since only finitely many of these are necessary to achieve $y$, it follows that $f$ is onto.
\newline
\newline
\noindent Lastly we must show that $f$ is one-to-one.  We will do this by constructing an inverse to $f$. Note that $\mathbb{Z}_2^{\infty}$ is generated by the elements $y_i$ having a 1 in the $i$-th copy of $\mathbb{Z}_2$ and zero elsewhere. In fact, every element of $\mathbb{Z}_2^{\infty}$ may be written uniquely as a linear combination of the $y_i$. Define $f^{-1}$ on generators by $f^{-1}(y_i)=A_{i,i+1}$. The map $f^{-1}$ is well-defined and $f\circ f^{-1}$ and $f^{-1}\circ f$ are both identity functions. Thus, $f$ is one-to-one. This completes the proof.   
\end{proof}

\begin{remark} Let $e_i$ denote the infinite vector with a $1$ in the $i^{th} $ position and zero elsewhere. Then $A_{i, i+1} $ maps to $e_i$. More specifically, $A_{0,1} $ maps to $e_0 $ and $A_{-1,0} $ maps to $e_{-1}$. Then $ A_{-2,1} $ maps to $ e_{-2} + e_{-1} + e_{0} $ and $A_{-2,3} A_{1,2} $ maps to $ e_{-2} + e_{-1} + e_{0} + e_{2}$. 
\end{remark}

\subsection{The Three Loop Framed Isotopy Invariant} In this section, we give the precise definition of the three loop framed isotopy invariant $\Phi^{fr}$ and prove that it is indeed an invariant of framed virtual knots.
\newline
\newline
\noindent Let $K$ be a virtual knot diagram. We associate an element of $A$, denoted $\Phi^{fr}(K) $ as follows. Given two distinct crossings $p$ and $q$ in $K$, let $K_{pq} $ denote the virtual link obtained by taking oriented smoothings at $p$ and $q$. Let $ \mathcal{P} (K) $ denote the set of unordered pairs of crossings such that $K_{pq} $ is a three component virtual link diagram (so that the corresponding arrows are non-intersecting in the Gauss diagram). In symbols, $\mathcal{P}(K) = \{  \{ p, q \} | K_{pq} \text{ is a
three component link}  \}$. The \emph{three loop framed isotopy invariant} is defined to be:
\begin{equation}
\Phi^{fr}(K)= \prod_{\lbrace p,q \rbrace \in \mathcal{P}(K)} A_{w(p) w(q) } .
\end{equation}

\begin{theorem} The map $ \Phi^{fr}: \mathbb{Z} [ \mathscr{K}^{fr}] \rightarrow A $  is an invariant of framed virtual knots. \end{theorem}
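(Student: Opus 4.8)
The plan is to show that $\Phi^{fr}$ is unchanged by each of the extended Reidemeister moves \emph{except} classical Reidemeister 1, since $\mathscr{K}^{fr}$ is generated by these moves. Two preliminary observations streamline the argument. First, because $A$ is abelian, the product defining $\Phi^{fr}(K)$ does not depend on the order of its factors, so it suffices to track how the collection of factors $A_{w(p)w(q)}$ indexed by $\{p,q\}\in\mathcal{P}(K)$ changes. Second, by the parenthetical remark in the definition, $\{p,q\}\in\mathcal{P}(K)$ if and only if the chords $p,q$ do not cross in the Gauss diagram, so membership in $\mathcal{P}(K)$ is a purely combinatorial condition on $G_K$. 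Invariance under the non-classical extended Reidemeister moves is then immediate: these moves leave the underlying chord diagram, together with its signs, unchanged, so by Proposition~\ref{weightprop}(1) every weight $w(c)$ is preserved and $\mathcal{P}(K)$ is unchanged; hence every factor is preserved.

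For the Reidemeister 2 move, let $p,q$ be the two crossings created. By Proposition~\ref{weightprop}(3) one has $w(p)=w(q)$ and the weights of all remaining crossings are unchanged; moreover the crossing relation of any two arrows disjoint from $\{p,q\}$ is unaffected, so all factors $A_{w(r)w(s)}$ with $r,s\notin\{p,q\}$ survive verbatim. The new factors are absorbed by the relations of $A$. The pair $\{p,q\}$, when it lies in $\mathcal{P}(K)$, contributes $A_{w(p)w(q)}=A_{w(p)w(p)}=A_{ii}=1$. For any other crossing $r$, the arrows $p$ and $q$ occupy the same two arcs of the R2 disk, so an external arrow $r$ crosses $p$ if and only if it crosses $q$; thus the pairs $\{p,r\}$ and $\{q,r\}$ enter or leave $\mathcal{P}(K)$ together, and when present they contribute $A_{w(p)w(r)}A_{w(q)w(r)}=A_{w(p)w(r)}^{2}=1$ by $w(p)=w(q)$ and $A_{ij}^{2}=1$. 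Hence $\Phi^{fr}$ is unchanged.

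The Reidemeister 3 move is the crux, and it is where the relation $A_{ij}A_{jk}=A_{ik}$ is designed to be used. By the reduction of \cite{Ost} it suffices to treat the single move of Figure~\ref{omega3} and its all-crossings-switched variant; in each, the three crossings $\alpha,\beta,\gamma$ persist and, by Proposition~\ref{weightprop}(4), their weights are preserved along with those of all other crossings. I would first argue that every factor except the three among $\{\alpha,\beta,\gamma\}$ is preserved: an arrow $s$ outside the move has both endpoints outside the R3 disk, whereas the endpoints of $\alpha,\beta,\gamma$ remain on the same three strand-arcs of the disk throughout the move, so although the within-arc positions of these endpoints change, the crossing relation of $s$ with each of $\alpha,\beta,\gamma$ (and with any other external arrow) is unchanged. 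It then remains to compare the contribution of the three pairs $\{\alpha,\beta\},\{\beta,\gamma\},\{\alpha,\gamma\}$ on the two sides. Writing $W_\alpha,W_\beta,W_\gamma$ for the three common weights, a direct reading of the cyclic order of chord endpoints shows that on one side exactly one of these pairs is non-crossing, contributing $A_{W_\alpha W_\gamma}$, while on the other side exactly two are non-crossing, contributing $A_{W_\alpha W_\beta}A_{W_\beta W_\gamma}$; these agree by $A_{ij}A_{jk}=A_{ik}$ (using $A_{ij}=A_{ji}$ from the preceding lemma to orient the indices). This establishes invariance under Reidemeister 3.

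The main obstacle is that, unlike the weights, the crossing relations among $\alpha,\beta,\gamma$ are not determined locally: they depend on the global routing of the three strands, i.e.\ on the cyclic order in which the three strand-arcs sit on $S^1$. The plan is therefore to carry out the endpoint-order computation for each of the two essentially distinct cyclic orders of the arcs; in both, the change in the product reduces to a single application of $A_{ij}A_{jk}=A_{ik}$, the two orders producing the relation in the two directions. Verifying that these are the only cases, and that the switched-crossing variant of Figure~\ref{omega3} behaves identically, is the one place where genuine care is required; everything else follows formally from the three defining relations $A_{ii}=1$, $A_{ij}^{2}=1$, and $A_{ij}A_{jk}=A_{ik}$ together with Proposition~\ref{weightprop}.
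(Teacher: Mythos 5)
Your proposal is correct and follows essentially the same route as the paper's proof: invariance is checked move by move, Proposition \ref{weightprop} preserves all weights, $A_{ij}^{2}=1$ absorbs the Reidemeister 2 contributions, and $A_{ij}A_{jk}=A_{ik}$ absorbs the Reidemeister 3 contributions, with external pairs matched one-to-one across each move. The only (immaterial) difference is the bookkeeping in the final R3 case: the paper assumes ``without loss of generality'' that all three pairs $\{p,q\},\{p,r\},\{q,r\}$ lie in $\mathcal{P}(K)$ so that the full product $A_{w(p)w(q)}A_{w(p)w(r)}A_{w(q)w(r)}=1$ sits on one side, whereas you distribute the three pairs one-versus-two across the two sides; since the crossing status of each pair flips under the move, every such split is handled by the same identity, so both versions close the argument.
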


\begin{proof} It will be shown that $ \Phi^{fr} $ is invariant under Reidemeister 2 and 3 moves, and the non-classical extended Reidemeister moves. Each element of $ \mathcal{P}(K) $ contributes a factor
$ A_{w(p) w(q) } $ to the product $ \Phi^{fr}(K) $. Figure \ref{fig:rmoves} shows the left hand side (LHS) and right hand side (RHS) of the classical and non-classical extended Reidemeister moves. 
Let $K$ denote the diagram of a virtual knot representing the left hand side of a Reidemeister move and let $K'$ denote the diagram of a virtual knot representing the right hand side of a Reidemeister move. Recall Proposition \ref{weightprop} which states that 
Reidemeister moves may create or delete crossings, but the weight of the other crossings is not changed. Let $ \lbrace p , q \rbrace \in  \mathcal{P} (K) $.
\newline
\newline
\noindent\underline{Reidemeister 2:}  If
both $p$ and $q$ are not involved in the Reidemeister 2 move, then $ \lbrace p, q \rbrace $ is an element of $ \mathcal{P} (K) $ and $ \mathcal{P} (K') $.  If only $ p $ is in a Reidemeister 2 move on the LHS, then there exists a crossing $t$ with opposite sign (also in diagram $K$) such that $ w(p) = w(t) $ and $ \lbrace t, q \rbrace \in \mathcal{P} (K)$. Then both $ \lbrace p , q \rbrace $ and $ \lbrace t, q \rbrace $ contribute  $ A_{w(p) , w(q) } $ to the product $ \Phi^{fr} (K) $, resulting in a net contribution of $1$. The crossings 
$p$ and $t $ do not appear in the diagram $K'$. If both $p$ and $q$ appear in the Reidemeister 2 move on the LHS, then  their net contribution to $ \Phi^{fr}(K) $ is $A_{w(p),w(p)}=1$. These crossings do not appear in $K'$. 
\newline
\newline
\noindent\underline{Reidemeister 3:} If $ \lbrace p, q \rbrace $ is not involved in the Reidemeister 3 move, then  $A_{w(p) w(q)} $ is a factor of both 
$ \Phi^{fr}(K) $ and $ \Phi^{fr}(K')$.  If $p $ is involved in the LHS of a Reidemeister 3 move and $q$ is not then $ \lbrace p, q \rbrace $ contributes $ A_{w(p) w(q) }$ to $ \Phi^{fr}(K)$. On the RHS of a Reidemeister 3 move, there is a crossing $p'$ that corresponds to $p$
and $ w(p) = w(p')$. The pair $ \lbrace p' ,q  \rbrace $ is an element of $ \mathcal{P} (K') $ and contributes $ A_{w(p) w(q) }  $ 
to $ \Phi^{fr}(K') $.
\newline
\newline
\noindent
  Suppose both $ p $ and $q$ are involved in the LHS of a Reidemeister 3. Let  $r$ denote the third crossing in the Reidemeister 3 move on the LHS, and let $p', q' $ and $r'$ denote the corresponding crossings in the Reidemeister 3 move on the RHS. 
Note that $ w(p) = w(p')$, $w(q) = w(q')$ and $w(r) = w(r')$ and that if
 $ \lbrace i, j \rbrace \in \mathcal{P} (K) $ then $ \lbrace i', j' \rbrace \not\in 
\mathcal{P} (K')$ for  $i, j \in \lbrace p,q,r \rbrace $ (verify this by sketching and smoothing oriented Reidemeister 3 moves). We assume without loss of generality that
$ \lbrace  \lbrace p, q \rbrace , \lbrace p, r \rbrace, \lbrace q, r \rbrace  \rbrace  \in \mathcal{P} (K) $. Since $ A_{w(p)w(q) } A_{w(p) w(r)} A_{w(q) w(r)} = 1 $, the net contribution from both the LHS and  RHS of the Reidemeister 3 is $1$.
\newline
\newline
\noindent The non-classical extended Reidemeister moves do not change the weight of crossings and the sets $ \mathcal{P} (K) $ and $ \mathcal{P} (K') $  are in one-to-one correspondence.
\end{proof} 

\subsection{Framed Invariant and Virtual Isotopy} \label{sec_frame_virt_isotopy} Even though $\Phi^{fr}$ is an invariant of framed virtual isotopy, it can still be used to distinguish between virtual knots. If $K_1$ and $K_2$ are equivalent virtual knots having the same writhe modulo 2, then $\Phi^{fr}(K_1)=\Phi^{fr}(K_2)$.  The aim of this subsection is to prove this theorem. It will be used several times in the sequel. For example, this property turns out to be useful for distinguishing geometric symmetries of virtual knots (see Section \ref{sec_geometric}). 

\begin{theorem}\label{writheandframed} If $K_1$ and $K_2$ are equivalent virtual knot diagrams and $writhe(K_1)=writhe(K_2)$, then $\Phi^{fr}(K_1)=\Phi^{fr}(K_2)$.
\end{theorem}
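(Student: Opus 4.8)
The plan is to leverage the preceding theorem, which already establishes that $\Phi^{fr}$ is invariant under the Reidemeister 2 and 3 moves and the non-classical extended moves. Consequently, the entire content of the statement is the control of $\Phi^{fr}$ under classical Reidemeister 1 moves. So I would fix a finite sequence of extended Reidemeister moves carrying $K_1$ to $K_2$ and analyze only the Reidemeister 1 steps, showing that the writhe hypothesis forces their cumulative effect on $\Phi^{fr}$ to be trivial.

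First I would compute the effect of a single Reidemeister 1 move. Suppose $K'$ is obtained from $K$ by introducing a kink, i.e.\ a single crossing $c$. In the Gauss diagram the arrow $c$ is an isolated chord whose two endpoints are adjacent, so $c$ is non-intersecting with every other arrow; equivalently $\{c,q\}\in\mathcal{P}(K')$ for every other crossing $q$, while every pair not involving $c$ lies in $\mathcal{P}(K)$ if and only if it lies in $\mathcal{P}(K')$. By Proposition \ref{weightprop} the weight $w(c)$ equals $0$, and since $c$ lies in no set $N_q$ the weights $w(q)$ of the remaining crossings are unchanged. Hence
\[
\Phi^{fr}(K') = \Phi^{fr}(K)\cdot\prod_{q} A_{w(c),w(q)} = \Phi^{fr}(K)\cdot\prod_{q} A_{0,w(q)},
\]
the product running over the crossings $q\neq c$ of $K$. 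I would then write $\delta_K=\prod_{q}A_{0,w(q)}$ for this correction factor.

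The key step is to observe that $\delta_K$ is itself a full virtual knot invariant. Using Proposition \ref{weightprop}, one checks that $\delta_K$ is unchanged by every move: a Reidemeister 1 move adjoins a factor $A_{0,0}=1$; a Reidemeister 2 move adjoins two equal factors $A_{0,w(p)}A_{0,w(t)}=A_{0,w(p)}^2=1$, since $w(p)=w(t)$ and every generator squares to the identity; a Reidemeister 3 move replaces three crossings by three crossings of equal weight; and the non-classical moves alter neither the classical crossings nor their weights. Therefore $\delta_K$ takes one common value $\delta\in A$ at every diagram occurring in the chosen sequence, so \emph{each} Reidemeister 1 step multiplies $\Phi^{fr}$ by the same element $\delta$.

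Finally I would combine this with the writhe hypothesis. If the sequence contains $n$ Reidemeister 1 moves then $\Phi^{fr}(K_2)=\Phi^{fr}(K_1)\,\delta^{\,n}$. Each such move changes the writhe by $\pm1$ and no other move changes it, so $writhe(K_1)=writhe(K_2)$ forces the number of $+1$ steps to equal the number of $-1$ steps; in particular $n$ is even. Since $A_{ij}^2=1$ for all generators, every element of the abelian group $A$ has order dividing $2$, whence $\delta^{\,n}=(\delta^{2})^{n/2}=1$ and $\Phi^{fr}(K_2)=\Phi^{fr}(K_1)$. I expect the main obstacle to be the invariance verification for $\delta_K$: one must be careful that the weights entering $\delta_K$ are genuinely unaffected by a Reidemeister 1 move (which rests on the kink arrow being disjoint from all others) and that the Reidemeister 2 cancellation really yields $A_{0,w(p)}^2$ rather than two unrelated factors. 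Everything else is the parity bookkeeping for the writhe and the order-two structure of $A$.
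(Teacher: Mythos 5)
Your argument is correct, and it takes a genuinely different route from the paper. The paper's proof works directly on the sequence of moves: each time a Reidemeister 1 move occurs it inserts or deletes an auxiliary isolated arrow marked $e$, uses the relations $A_{ij}^2=1$ and commutativity to argue that each such insertion leaves $\Phi^{fr}$ unchanged, and then uses the writhe hypothesis to conclude that the marked arrows all disappear by the end of the (modified) sequence; it never writes down the correction factor explicitly. You instead isolate the exact discrepancy $\delta_K=\prod_{q}A_{0,w(q)}$ caused by a single kink, prove the small lemma --- not stated in the paper --- that $\delta_K$ is itself invariant under all extended Reidemeister moves (this is a routine check from Proposition \ref{weightprop}, exactly as you outline), and then finish with pure bookkeeping: the writhe hypothesis forces an even number of Reidemeister 1 moves, each contributing the same factor $\delta$, and every element of $A$ squares to the identity. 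Your version is tighter than the paper's somewhat informal induction on the move sequence, and it has the added benefit of producing Corollary \ref{mod2frame}(2) for free, since $\delta$ is precisely the factor $\prod_{a}A_{0,w(a)}$ appearing there. The one point worth stating explicitly in a final write-up is that the correction factor attached to a \emph{deletion} of a kink is $\delta^{-1}=\delta$, so that additions and deletions genuinely contribute the same element; with that noted, the proof is complete.
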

\begin{proof} Suppose that we have a sequence $R_1$, $R_2$,$\ldots$, $R_N$ of extended Reidemeister moves taking $K_1$ to $K_2$. $\Phi^{fr}$ is invariant under all moves except the Reidemeister 1 move. If none of the $R_i$, $1 \le i \le N$ is a Reidemeister 1 move, then the theorem is true.
\newline
\newline
\noindent Let $R_i$ be move with smallest subscript which is a Reidemeister $1$ move. Then $R_i$ either adds an isolated arrow to the Gauss diagram or deletes an isolated arrow from a Gauss diagram.  Suppose that $R_i$ adds an isolated arrow.  Then the writhe of the diagram has changed by $\pm 1$. We add a second isolated arrow to the diagram and mark it with an $e$ (the $e$ stands for $e$xtra). Since $A$ is abelian and $A_{ij}^2=1$, we must have that this operation does not change the value of $\Phi^{fr}$. On the other hand, suppose that $R_i$ deletes an arrow.  Then in this case we delete the arrow and add an isolated arrow marked $e$ in such a way that it does not interfere with the Reidemeister move $R_{i+1}$. Again, the value of $\Phi^{fr}$ has not changed.
\newline
\newline
\noindent Now we continue with the other Reidemeister moves.  If an arrow marked $e$ is ever in a position to interfere with any subsequent Reidemeister move $R_j$, we simply delete it an add it somewhere so that it no longer interferes. Since $writhe(K_1)=writhe(K_2)$, there must be other Reidemeister moves somewhere in the sequence from $K_1$ to $K_2$. When we have arrived at that point in the sequence, there are two possibilities: either there are no arrows marked $e$ or there is at least one arrow marked $e$. If there are no arrows marked $e$, then we repeat exactly the procedure for $R_i$ (first Reidemeister 1 move in the given sequence).
\newline
\newline
\noindent Now suppose we are at a Reidemeister 1 move in the sequence and that there is an arrow marked $e$. If the move increases the number of arrows, we add the arrow as defined in the move and delete any arrow marked $e$. If the move decreases the number of arrows, we delete the the arrow as defined by the move and delete any arrow marked $e$. In both cases, the value of $\Phi^{fr}$ does not change.
\newline
\newline
\noindent By this procedure, we see that the Gauss diagram has at most one arrow marked $e$ at any given position in the modified sequence. Since $writhe(K_1)=writhe(K_2)$, we must have that the final diagram has no arrows marked $e$. Now the last diagram in the sequence is a diagram $K_2$. Since the value of $\Phi$ has remained unchanged, we conclude that $\Phi^{fr}(K_1)=\Phi^{fr}(K_2)$.
\end{proof}

\begin{corollary}\label{mod2frame} Let $K_1$ and $K_2$ be equivalent virtual knot diagrams.
\begin{enumerate}
\item If $\emph{writhe}(K_1)\equiv \emph{writhe}(K_2) \pmod{2}$, then $\Phi^{fr}(K_1)=\Phi^{fr}(K_2)$.
\item If $ \emph{writhe}(K_1) \not\equiv \emph{writhe} (K_2) \pmod{2} $ and $ \emph{writhe} (K_1) $ is odd, then
\newline
$ \Phi^{fr}(K_1) =  \Phi^{fr} (K_2) \cdot  \underset{\text{crossings } a \text{ in } K_2}{ \prod}  A_{0, w(a)} $.
\end{enumerate} 
\end{corollary}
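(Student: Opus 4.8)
The plan is to reduce both statements to Theorem \ref{writheandframed} by using Reidemeister 1 moves to equalize the writhes of $K_1$ and $K_2$, while carefully tracking the only effect an R1 move can have. Since $\Phi^{fr}$ is already invariant under every extended Reidemeister move except R1, the single thing that must be understood is the change in $\Phi^{fr}$ produced by inserting one isolated arrow.

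First I would establish the key computation. Let $c$ be an isolated arrow introduced into a Gauss diagram $D$ by an R1 move. By Proposition \ref{weightprop}(2) we have $w(c)=0$, and since $c$ is isolated it is non-intersecting with every other arrow of $D$; hence $\{c,q\}\in\mathcal{P}(D\cup\{c\})$ for every crossing $q$ of $D$, and each such pair contributes the factor $A_{w(c),w(q)}=A_{0,w(q)}$. The weights of the original crossings are unchanged and the pairs already counted in $\mathcal{P}(D)$ are unaffected, so
\[
\Phi^{fr}(D\cup\{c\})=\Phi^{fr}(D)\cdot\prod_{q}A_{0,w(q)},
\]
the product running over the crossings $q$ of $D$. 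Writing $E(K)=\prod_{a}A_{0,w(a)}$, an R1 insertion multiplies $\Phi^{fr}$ by $E$. Two features make the rest go through: $E(K)^2=\prod_a A_{0,w(a)}^2=1$ because $A_{ij}^2=1$; and since every inserted arrow has weight $0$ (contributing only the trivial factor $A_{0,0}=1$) and leaves all other weights fixed, the value of $E$ is unchanged by further insertions. Thus inserting $N$ isolated curls into $K_2$ to form a diagram $\tilde K_2$ gives $\Phi^{fr}(\tilde K_2)=\Phi^{fr}(K_2)\,E(K_2)^N$, and $\tilde K_2\sim K_2\sim K_1$.

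For part (1), since $writhe(K_1)\equiv writhe(K_2)\pmod 2$ the number $N=|writhe(K_1)-writhe(K_2)|$ of curls needed to reach $writhe(\tilde K_2)=writhe(K_1)$ is even, so $\Phi^{fr}(\tilde K_2)=\Phi^{fr}(K_2)\,E(K_2)^N=\Phi^{fr}(K_2)$; Theorem \ref{writheandframed} applied to the equivalent, equal-writhe diagrams $\tilde K_2$ and $K_1$ then yields $\Phi^{fr}(K_1)=\Phi^{fr}(\tilde K_2)=\Phi^{fr}(K_2)$. For part (2) the parities differ, so $N$ is odd, $E(K_2)^N=E(K_2)$, and $\Phi^{fr}(\tilde K_2)=\Phi^{fr}(K_2)\,E(K_2)$; Theorem \ref{writheandframed} then gives $\Phi^{fr}(K_1)=\Phi^{fr}(\tilde K_2)=\Phi^{fr}(K_2)\prod_{a}A_{0,w(a)}$, the product over crossings $a$ of $K_2$, as claimed. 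Modifying $K_2$ rather than $K_1$ is what produces the correction factor already evaluated on $K_2$, so no separate invariance statement for $E$ is needed.

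The one place requiring care, and the main obstacle, is the key computation: one must verify that an isolated arrow is genuinely non-intersecting with all other arrows (so it pairs with every crossing in $\mathcal{P}$), that its weight is exactly $0$, and that the factor it contributes is squarefree, so that only the parity of $N$ matters. The hypothesis that $writhe(K_1)$ is odd in part (2) is merely a normalization; the argument shows the stated identity holds whenever the two writhes have opposite parity.
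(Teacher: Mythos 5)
Your proof is correct and follows essentially the same route as the paper: add isolated (weight-zero) arrows to $K_2$ to equalize writhes, use $A_{ij}^2=1$ and $A_{00}=1$ to see that an even number of insertions leaves $\Phi^{fr}$ fixed while an odd number multiplies it by $\prod_a A_{0,w(a)}$, and then invoke Theorem \ref{writheandframed}. Your write-up simply makes explicit the per-insertion computation that the paper leaves implicit.
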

\begin{proof} If $\emph{writhe}(K_1)=\emph{writhe}(K_2)$, the result follows from the previous theorem. Otherwise, we add an even number of isolated arrows to a Gauss diagram of $K_2$ to obtain a virtual knot diagram $K_2'$ so that $\emph{writhe}(K_1)=\emph{writhe}(K_2')$. Certainly, $K_1$ and $K_2'$ are equivalent virtual knot diagrams. Hence, $\Phi^{fr}(K_1)=\Phi^{fr}(K_2')$. Now, since $A_{ij}^2=1$ for all $i,j \in \mathbb{Z}$ and $A_{00}=1$, it follows that the added arrows do not change the value of $\Phi^{fr}$. Hence, $\Phi^{fr}(K_1)=\Phi^{fr}(K_2')=\Phi^{fr}(K_2)$.
\newline
\newline
\noindent If $ \emph{writhe}(K_1) $ is odd, we add an odd number of isolated arrows to $K_2$ to obtain a virtual knot diagram $K_2'$ so that $  \emph{writhe}(K_1) = \emph{writhe} (K_2') $. The diagrams $K_1 $ and $K_2'$ are equivalent as virtual knot diagrams and $ \Phi^{fr} (K_2') = \Phi^{fr}(K_2) \prod 
 A_{0,w(a)}$. 
\end{proof}

\subsection{Examples}
The framed virtual isotopy invariant $ \Phi^{fr}$ does not detect classical knots.  There are non-classical virtual knots that are not detected by this invariant. Notably, the virtual trefoil is not detected. 

\begin{figure}[htb] \scalebox{0.3}{
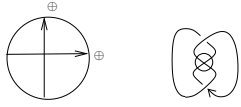}
\caption{A virtual trefoil and Gauss diagram.}
\label{fig:vtref}
\end{figure}
\hspace{1cm}
\newline
\newline
\noindent In Figure \ref{fig:mini}, we have a schematic diagram of a family of knots $ O(n)$ with an even number of arrows. In Figure \ref{fig:mini}, all arrows are positively signed and the dotted arrows represent a set of  $n$ ``stacked'' arrows.  The stacked arrows are parallel, co-oriented and positively signed. If two arrows $c_i $ and $c_j$ are in a set of stacked arrows, then $c_i $ and $c_j$ are parallel, co-oriented and $N_{c_i} = N_{c_j} $. As a result, the stacked arrows have the same weight and
 $ w(c_i) = w(c_j) $. For this reason, we suppress the individual labels on the arrows and by $w(c) $, we mean the weight of any arrow in the stack labeled $c$. 

\begin{proposition} There is a family of oriented knots corresponding to the family of Gauss diagrams $ O(n) $.  
\begin{equation}
\Phi^{fr}(O(n) ) =\begin{cases}  A_{0, n} & \text{ if $n$ is even } \\
A_{0, -2}  & \text{ if $n$ is odd}  \end{cases} 
\end{equation}
\end{proposition}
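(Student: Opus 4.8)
The plan is to compute the product $\Phi^{fr}(O(n)) = \prod_{\{p,q\} \in \mathcal{P}(O(n))} A_{w(p)w(q)}$ directly from the Gauss diagram, exploiting the relations $A_{ii}=1$, $A_{ij}^2=1$ and $A_{ij}A_{jk}=A_{ik}$ together with commutativity and the identity $A_{ij}=A_{ji}$ established in the preceding lemma.

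First I would read the combinatorial data off the schematic $O(n)$: list the non-stacked arrows together with the stack $c$ of $n$ parallel, co-oriented, positively signed arrows, and then compute the weight of every arrow as a function of $n$. Since all arrows are $\oplus$, each weight $w(a)=\sum_{x\in N_a} int_a(x)$ is simply a signed count of the arrows crossing $a$, read off by the rule of Figure \ref{fig:intx}; for an arrow in the stack this count receives contributions from the non-stacked arrows but none from the other $n-1$ stacked arrows, which are parallel and hence non-intersecting. The essential point to verify is that both the common stack weight $w(c)$ and the weights of the non-stacked arrows depend on $n$, and it is here that the two parities of $n$ separate.

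Next I would identify $\mathcal{P}(O(n))$. Because the $n$ stacked arrows are mutually non-intersecting and all carry the same weight $w(c)$, every one of the $\binom{n}{2}$ pairs lying inside the stack contributes a factor $A_{w(c)w(c)}=1$ and drops out of the product. Thus only pairs containing at least one non-stacked arrow can contribute. For a fixed non-stacked arrow $d$ non-intersecting with the stack, the $n$ pairs $\{c_i,d\}$ each contribute the identical factor $A_{w(c)w(d)}$; since $A$ is abelian and $A_{ij}^2=1$, these $n$ equal factors collapse to $1$ when $n$ is even and to the single factor $A_{w(c)w(d)}$ when $n$ is odd. This parity collapse is precisely the mechanism that produces the two cases in the statement.

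Finally I would assemble the surviving factors --- the parity-dependent stack-to-arrow factors together with the factors $A_{w(d)w(d')}$ coming from pairs of non-stacked arrows, whose weights already encode $n$ --- and telescope them via $A_{ij}A_{jk}=A_{ik}$ to reduce the product to a single generator, obtaining $A_{0,n}$ for $n$ even and $A_{0,-2}$ for $n$ odd. I expect the main obstacle to be the weight computation of the first step: correctly tallying the signed intersection numbers in the schematic and tracking how the parity of $n$ alters $w(c)$ and the non-stacked weights. Once the weights are in hand, the remainder is a routine application of the relations in $A$, the only genuine subtlety being the even/odd cancellation of repeated generators.
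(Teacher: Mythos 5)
Your plan matches the paper's proof in all essentials: the authors compute the weights $w(a)=n$, $w(b)=0$, $w(c)=-2$, $w(d)=1$, record $\mathcal{P}(O(n))$ as the multiset $\{\{a,b\},\, n\cdot\{a,d\},\, n^{2}\cdot\{c,d\}\}$, and then invoke exactly the parity collapse of repeated factors (via $A_{ij}^{2}=1$) and the telescoping relation $A_{ij}A_{jk}=A_{ik}$ that you describe, yielding $A_{0,n}$ for $n$ even and $A_{0,1}A_{1,-2}=A_{0,-2}$ for $n$ odd. The one step you leave open, reading the actual weights and pair multiplicities off the schematic, is precisely the content the paper supplies, so your approach is correct and essentially identical.
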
 

\begin{proof} We compute the weights of each individual arrow and the stacked arrows.

\begin{align*}
w(a) &= n, & w(b) &=0, \\
w(c) &=-2 ,& w(d)&=1.
\end{align*}

\noindent Using multi-set notation, we find that
 $ \mathcal{P} (O(n)) =  \lbrace \lbrace a,b \rbrace, n \cdot \lbrace a,d \rbrace, n^2 \cdot \lbrace c,d \rbrace \rbrace $. Note that if $n$ is even, $ \Phi^{fr}(K) = A_{0,n} $. If $n$ is odd, $\Phi^{fr}(O(n)) = A_{0, -2} $. Therefore, we obtain the values $ A_{0,2k} $ for $k \in 
\mathbb{Z}$.
\end{proof}

\begin{figure}[htb] \scalebox{0.5}{
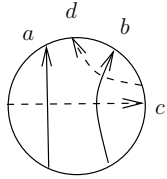}
\caption{Definition of the virtual knot $O(n)$.}
\label{fig:mini}
\end{figure}

\noindent A natural question to consider is if $ \Phi^{fr}: \mathbb{Z} [ \mathscr{K}^{fr} ] \rightarrow A $ is a surjective mapping (see \cite{cheng_knot} for a related result for the Cheng polynomial).  The following proposition, whose proof is omitted, shows that $\Phi^{fr}$ is not surjective. 

\begin{proposition} For $K \in \mathscr{K}^{fr} $, $ f \circ \Phi^{fr}(K) $  has an even number of non-zero entries in $ \mathbb{Z}_2 ^{ \infty}$. 
\end{proposition}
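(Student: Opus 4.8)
The plan is to reduce the statement to a single parity computation. Since every vector in $\mathbb{Z}_2^\infty$ has only finitely many nonzero coordinates, the ``total parity'' map $\sigma\colon \mathbb{Z}_2^\infty \to \mathbb{Z}_2$ sending a vector to the $\mathbb{Z}_2$-sum of its coordinates is a well-defined group homomorphism, and a vector has an even number of nonzero entries precisely when $\sigma$ kills it. So it suffices to prove $\sigma(f(\Phi^{fr}(K)))=0$ for every $K$. Because $\sigma\circ f\colon A\to\mathbb{Z}_2$ is a homomorphism and $\Phi^{fr}(K)=\prod_{\{p,q\}\in\mathcal{P}(K)}A_{w(p)w(q)}$, this turns the product into the sum $\sum_{\{p,q\}\in\mathcal{P}(K)}\sigma(f(A_{w(p)w(q)}))$.

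Next I would evaluate $\sigma\circ f$ on generators using the explicit description of $f$. For $i\le j$ the vector $f(A_{ij})$ is a run of $j-i$ ones, so $\sigma(f(A_{ij}))\equiv j-i\equiv i+j \pmod 2$; the cases $i=j$ and $i>j$ are consistent via $A_{ii}=1$ and $A_{ij}=A_{ji}$. Hence $\sigma(f(\Phi^{fr}(K)))\equiv \sum_{\{p,q\}\in\mathcal{P}(K)}(w(p)+w(q)) \pmod 2$. Re-collecting this sum by individual crossings gives $\sum_p d(p)\,w(p)\pmod 2$, where $d(p)$ is the number of crossings $q$ with $\{p,q\}\in\mathcal{P}(K)$. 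By the definition of $\mathcal{P}(K)$ these are exactly the arrows \emph{not} intersecting $p$, so if $n$ is the number of classical crossings and $N_p$ the set of arrows meeting $p$, then $d(p)=(n-1)-|N_p|$.

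The computation is then driven by two parity facts. First, $w(p)\equiv |N_p| \pmod 2$, since each summand $sign(x)\,int_p(x)$ in the definition of $w(p)$ equals $\pm 1$ and hence is $\equiv 1$; the number of summands is $|N_p|$. Second, $\sum_p |N_p|$ is even, because it double-counts the unordered pairs of mutually intersecting arrows. Using the first fact together with $w(p)^2\equiv w(p)\pmod 2$, I would compute $d(p)\,w(p)\equiv\bigl((n-1)+w(p)\bigr)w(p)\equiv n\,w(p)\pmod 2$, so that $\sum_p d(p)\,w(p)\equiv n\sum_p w(p)\pmod 2$. Finally the two facts combine to give $\sum_p w(p)\equiv \sum_p |N_p|\equiv 0\pmod 2$, whence $\sigma(f(\Phi^{fr}(K)))=0$, which is the claim.

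The genuinely substantive step is the opening move: recognizing that passing through the homomorphism $\sigma\circ f$ converts the group-valued invariant into a count of $(w(p)+w(q))$ over non-intersecting pairs, after which everything is elementary double-counting. The one point requiring care is the identification $d(p)=(n-1)-|N_p|$, i.e. that $\mathcal{P}(K)$ is precisely the set of non-intersecting pairs of arrows; this is immediate from the definition of $\mathcal{P}(K)$ and the fact that oriented smoothing along two chords of a Gauss diagram produces a three-component link exactly when the chords do not cross. The remaining manipulations are routine parity bookkeeping.
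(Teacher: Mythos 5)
Your proof is correct, and since the paper explicitly omits its own proof of this proposition there is nothing to compare it against; your argument fills that gap. The reduction via the total-parity homomorphism $\sigma\colon\mathbb{Z}_2^\infty\to\mathbb{Z}_2$ is sound (it is well defined because elements of the direct sum have finite support), and the generator computation $\sigma(f(A_{ij}))\equiv i+j\pmod 2$ checks out in all cases, including $i=j$ and $i>j$ via $A_{ij}=A_{ji}$. The subsequent bookkeeping is also right: $\mathcal{P}(K)$ is by definition exactly the set of non-intersecting pairs of arrows, so $d(p)=(n-1)-|N_p|$; the congruence $w(p)\equiv|N_p|\pmod 2$ follows since each summand in the weight is $\pm1$; and $\sum_p|N_p|$ is even by double counting. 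The chain $d(p)w(p)\equiv\bigl((n-1)+w(p)\bigr)w(p)\equiv nw(p)\pmod 2$ and the final vanishing $n\sum_p w(p)\equiv n\sum_p|N_p|\equiv 0$ are all valid. As a sanity check, your conclusion is consistent with every computed value in the paper: $A_{0,n}$ ($n$ even), $A_{0,-2}$, and $A_{1,7}A_{-3,-5}$ all map under $f$ to vectors with an even number of ones.
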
 

\begin{figure}[htb] \epsfysize = 1 in
\centerline{\epsffile{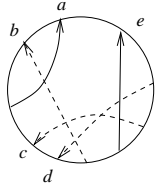}}
\caption{An example of a virtual knot $K$ such that $\Phi^{fr}(K)$ can be represented with only odd subscripts.}
\label{fig:prime}
\end{figure}

\noindent Lastly, consider the Gauss diagram in Figure \ref{fig:prime}. This is an example in which the value of $\Phi^{fr}$ is written so that it contains only odd subscripts. All arrows are positive and the dotted arrows represent a set of $3$ stacked arrows (which consequently have the same weight). The weights of the arrows and stacked arrows are:

\begin{align*}
w(a)&= -3, & w (b) &= -5,  &
w(c) &= 1,  \\  w(d) &= 7, &
w(e) &=-6. 
\end{align*}

\noindent It follows that $ \Phi^{fr}(K) = A_{1,7} A_{-3,-5}$.

\section{Properties of $\phi_{i,j,k}$ and $\Phi^{fr}$} \label{sec_properties}

\subsection{Finite-Type Invariants} \label{sec_fti}
We refer the reader to \cite{gpv} for a more complete discussion of finite-type invariants (or Vassiliev invariants). We will be using the definition of finite-type invariant for virtual knots originally given in \cite{kvirt}. 
\newline
\newline
\noindent A singular virtual knot diagram is a virtual knot diagram with singular crossings that are indicated by a vertex. Singular virtual knots are equivalence classes determined by the Reidemeister moves, virtual Reidemeister moves and the singular crossing moves (Figure \ref{fig:singular}). 

\begin{figure}[htb] 
\epsfysize = 1 in
\epsffile{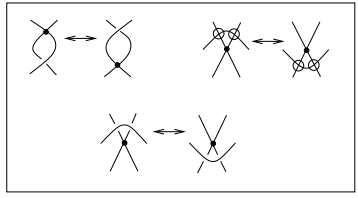} 
\caption{The singular moves.}
\label{fig:singular}
\end{figure}
\hspace{1cm}
\newline
\newline
\noindent Let $K$ be a diagram with $n+1 $ singular crossings. Let $ \sigma $ denote a $(n+1)$-tuple of 1's and 0's. Denote by $ |\sigma| $ the the number of nonzero entries of $\sigma$. The diagram $K_{\sigma } $ is obtained by replacing the $i^{th} $ singular crossing with a negative (respectively positive) crossing if a $1$ (respectively $0$) appears in the $i^{th} $ position of $ \sigma$. Then a virtual knot invariant $v$ is said to be a \emph{finite-type invariant of degree} $ \leq n$ if for all diagrams $K$ having $n+1$ singular crossings we have:

\begin{equation} \label{finite}
\sum_{ \sigma}  (-1)^{ | \sigma|}  v (K_{\sigma}) = 0.
\end{equation}

\subsubsection{Three Loop Invariant} It is proved that the three loop isotopy invariant is a finite type invariant of degree $\le 2$ but not $\le 1$.
 
\begin{theorem} The invariant $ \phi_{i,j,k} $ is a finite-type invariant of degree two.  \end{theorem}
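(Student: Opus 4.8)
The plan is to verify the two halves of the claim separately: that $\phi_{i,j,k}$ is a finite-type invariant of degree $\le 2$, and that it is not of degree $\le 1$. For the first half I would take an arbitrary singular virtual knot diagram $K$ with three singular crossings $a,b,c$ and show that the alternating sum $\sum_{\sigma\in\{0,1\}^3}(-1)^{|\sigma|}\phi_{i,j,k}(K_\sigma)$ of (\ref{finite}) vanishes. The governing observation is that resolving a double point into its positive and negative crossings corresponds, in the Gauss diagram, to simultaneously reversing the direction of the associated arrow and flipping its sign (a crossing switch). Writing $\Delta_m$ for the operator ``positive resolution minus negative resolution'' at the $m$-th double point, the alternating sum is exactly $\Delta_a\Delta_b\Delta_c\,\phi_{i,j,k}$, and since $\phi_{i,j,k}(K_\sigma)=\sum_{D_{||}'}\langle F_{i,j,k},D_{||}'\rangle$, I would push the operator inside the sum and argue pair-by-pair.

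The key lemma to isolate is that, for a fixed two-arrow subdiagram $D_{||}'=\{\alpha,\beta\}$, the quantity $\langle F_{i,j,k},D_{||}'\rangle$ depends on the resolution of a double point only when that double point is $\alpha$ or $\beta$, and then only through the factor $sign(D_{||}')=sign(\alpha)\,sign(\beta)$. This rests on two facts. First, the oriented smoothing, the resulting components $A_1,A_2,A_3$, and hence the sets $C_{i,j}$, are determined solely by the positions of the endpoints of $\alpha$ and $\beta$ on the circle, so reversing any arrow's direction leaves them unchanged. Second, each weight $w_{ij}$ is invariant under a crossing switch of any single arrow: if $c\in C_{i,j}$ is switched then its contribution $sign(c)\,int_{a_{ij}}(c)$ is preserved because both factors change sign, and if the reference arrow $a_{ij}$ is switched then the internal sum merely changes overall sign while $w_{ij}$, being its absolute value (and independent of the choice of $a_{ij}$), is unaffected. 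Consequently the label triple $(w_{1,2},w_{1,3},w_{2,3})$ and the match against $F_{i,j,k}$ are untouched by any resolution; only $sign(D_{||}')$ can move.

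With the lemma in hand the conclusion is immediate: a pair $D_{||}'$ uses only two arrows, so among the three double points $a,b,c$ at least one is not an arrow of $D_{||}'$; the term $\langle F_{i,j,k},D_{||}'\rangle$ is then independent of that double point's resolution and is killed by the corresponding $\Delta$. Summing over all pairs gives $\Delta_a\Delta_b\Delta_c\,\phi_{i,j,k}=0$, so $\phi_{i,j,k}$ has degree $\le 2$. To show it is not of degree $\le 1$, I would run the same bookkeeping on a diagram with two double points $a,b$: every pair other than $\{a,b\}$ is annihilated, while the single pair $\{a,b\}$ contributes $\Delta_a\Delta_b\big(sign(a)\,sign(b)\big)=4$ whenever its surrounding weights realize the labels $i,j,k$ in a configuration counted by $F_{i,j,k}$. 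Choosing a singular diagram whose two double points form such a matched pair (possible precisely because $i,j,k$ are distinct) yields a nonzero second difference, so the degree is exactly two.

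The step I expect to be most delicate is the invariance of the weights $w_{ij}$ under a crossing switch; getting this right requires the precise conventions of Figure \ref{fig:intx}, namely that a Vassiliev resolution flips both the arrow direction and the sign, so that the product $sign(c)\,int_{a_{ij}}(c)$ is conserved. Everything else is organizational, and the ``not degree one'' half reduces to the explicit construction above.
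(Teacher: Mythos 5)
Your argument for degree $\le 2$ is correct and is essentially the paper's: you organize the alternating sum by families of corresponding two-arrow subdiagrams, observe that at least one of the three singular crossings is not an arrow of $D_{||}'$, and note that switching the sign and direction of such an arrow preserves every product $sign(c)\,int_{a_{ij}}(c)$ and hence every weight $w_{ij}$, so the term is constant in that coordinate of $\sigma$ and the inner sum factors through $\sum_{\epsilon\in\{0,1\}}(-1)^{\epsilon}=0$. The step you flag as delicate (invariance of the $w_{ij}$ under a crossing switch) is indeed handled correctly.

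The ``not degree $\le 1$'' half has a genuine gap. Your key lemma asserts that when the resolved double point \emph{is} one of the two arrows of $D_{||}'$, the value $\langle F_{i,j,k},D_{||}'\rangle$ changes only through the factor $sign(D_{||}')$. That is false: resolving such a crossing reverses the direction of the corresponding chord, which changes the configuration of the two arrows, and $F_{i,j,k}$ is a formal sum of three \emph{specific} configuration-plus-label diagrams, so the match indicator itself can drop from $1$ to $0$. (The weights $w_{ij}$ are indeed unaffected, since the smoothed arrows lie in no $C_{i,j}$ and $w_{ij}$ is an absolute value, but equivalence to a diagram of $F_{i,j,k}$ depends on the arrow directions as well as on the labels.) Consequently your computation $\Delta_a\Delta_b\bigl(sign(a)\,sign(b)\bigr)=4$ does not apply: in the paper's explicit example the four resolutions of the distinguished pair give $\langle F_{i,j,k},D_{00}\rangle=1$ and $\langle F_{i,j,k},D_{01}\rangle=\langle F_{i,j,k},D_{10}\rangle=\langle F_{i,j,k},D_{11}\rangle=0$, so the second difference is $1$, not $4$, and a priori a pattern such as $1,1,1,1$ or $1,0,0,-1$ would make it vanish. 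To close the argument you must exhibit a concrete singular diagram whose two singular arrows form a pair realizing the weights $i,j,k$ in a configuration counted by $F_{i,j,k}$ (the paper builds one from stacks of $i$, $j$, $k$ parallel positive arrows, Figure \ref{fig:degree1}) and then evaluate $\langle F_{i,j,k},D_{||}^{\sigma}\rangle$ for all four resolutions, verifying that the alternating sum is nonzero; the existence of a matched pair alone does not suffice.
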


\begin{proof} Let $K_{a,b,c} $ be a virtual knot diagram with distinct singular crossings $a,b,$ and $c $. Let $ \sigma $ and $ \omega $  be elements of $ \mathbb{Z}_2 ^3 $ so that $K_{ \sigma} $ and 
$ K_{\omega} $ denote resolutions of the singular virtual knot. We let $G_{\sigma} $ denote the Gauss diagram that corresponds to $ K_{ \sigma} $.  
We will use subscripts to identify specific Gauss diagrams. Let $l, m$ and $n$ be elements of $ \mathbb{Z}_2 $, the notation $G_{lmn} $ identifies the diagram with resolution $l,m$, and $n$ of the arrows $a,b$, and $c$ respectively. For example:
$G_{000} $ denotes the diagram with all three singular crossings positively resolved, 
$ G_{0mn} $ denotes a diagram where the arrow $a$ is positively signed and the notation
$G_{10n} $ denotes a diagram where $a$ is negatively signed and $b$ is positively signed.
We will denote a subdiagram of $ G_{ \sigma }$ (respectively $ G_{lmn} $) with two non-intersecting arrows as
 $D_{||} ^{ \sigma} $ (respectively $D_{||} ^{lmn} $). 
\newline
\newline
\noindent
The diagram $ G_{ \sigma} $ can be transformed into diagram $ G_{\omega} $ by changing the sign and direction of at most three arrows in $G_{ \sigma} $. Hence, for each subdiagram
$D_{||} ^{\sigma} $ in $G_{ \sigma} $ there is a corresponding subdiagram 
$D_{||} ^{\omega} $ in $ G_{ \omega} $. 
We say that $ D_{||} ^{ \sigma } $  \emph{corresponds} to $ D_{||}^{000} $ if we can obtain
$D_{||}^{ \sigma} $ from $D_{||} ^{000} $ by switching the orientation and sign of a (possibly empty) subset of the arrows corresponding to the singular crossings $a,b$, and $c$. 
For each $ \sigma \in 
\mathbb{Z}_2 ^3 $, we can obtain a subdiagram $D_{||} ^{ \sigma} $ that corresponds to 
$D_{||} ^{000}$.
We define the \emph{set of corresponding diagrams} to be
\begin{equation*} 
 \lbrace D_{||}^{ \sigma } |  \sigma  \in \mathbb{Z}_2 ^3  \text{ and }
D_{||} ^{ \sigma }  \text{ corresponds to } D_{||}^{000} \rbrace . 
\end{equation*}
\newline
\newline
\noindent
We compute $\phi_{ijk}(K_{abc}) $ by summing over all subdiagrams $ D_{||}'$ of $G_{\sigma} $.
\begin{align*} 
\phi_{ijk} (K_{abc}) &= \sum_{\sigma} (-1) ^{| \sigma|} \phi_{ijk} (K_{\sigma}) \\
&= \sum_{\sigma} (-1) ^{| \sigma|} \sum_{ D_{||} ' \subset G_{ \sigma} }
\langle F_{ijk} , D_{||}' \rangle  \\
&= \sum_{D_{||} ^{000} \subset G_{000} } \sum_{ \sigma} (-1)^{ | \sigma |} 
\langle F_{ijk}, D_{||} ^{ \sigma}  \rangle .
\end{align*}
It is sufficient to show that: 
\begin{equation} \label{zero}
\sum_{ \sigma} (-1)^{ | \sigma |} 
\langle F_{ijk}, D_{||} ^{ \sigma}  \rangle  =0.
\end{equation}
for every set of corresponding subdiagrams.
There are three cases to consider: arrows $a,b,$ and $c$ are not in $D_{||}^{000} $, exactly one of the arrows $a,b,c$ is contained in $ D_{||}^{000} $, or two of the arrows $a,b,c$ are in $ D_{||}^{000}$.
\newline
\newline
\noindent 
Suppose that the arrows corresponding to $a,b$ and $c$ are not in $D_{||} ^{000} $. Changing the sign and orientation of these arrows does not affect the weight or sign of the arrows in $D_{||} ^{000} $.  Hence, $ \langle F_{ijk}, D_{||} ^{ \sigma } \rangle =c $ for all $ \sigma $ in
$ \mathbb{Z}_2 ^3 $ where $ c \in \lbrace -1, 0 ,1  \rbrace $.  Then for this family of diagrams,
equation \ref{zero} is true.
\newline
\newline
\noindent Now suppose that $ D_{||} ^{000} $ contains one of $a,b,c$. Without loss of generality, we may suppose it is $a$. Changing the orientation and sign of the arrow $a$ affects the evaluation of $ \langle F_{ijk}, D_{||} ^{ \sigma} \rangle $. 
We note that 
\begin{align*}
 \langle F_{ijk}, D_{||} ^{0mn} \rangle &= c  \text{ for }m,n  \in \mathbb{Z}_2 , 
 c \in \lbrace -1, 0 , 1 \rbrace, \\
\langle F_{ijk}, D_{||} ^{1mn} \rangle &=b \text{ for } m,n \in \mathbb{Z}_2, b
\in \lbrace -1, 0, 1 \rbrace.
\end{align*} 
Summing these terms with appropriate sign, we see that equation \ref{zero} is verified.
\newline
\newline
Suppose that $D_{||} ^{000} $ contains $a$ and $b$ without loss of generality. 
Again, we note that changing the sign and direction of these arrows affects the evaluation. However changing the sign and direction of $c$ does not affect the evaluation. 
For $m \in \mathbb{Z}_2 $, we have that
\begin{align*}
\langle F_{ijk} D_{||} ^{00m} \rangle &= c_{00},  & \langle F_{ijk} D_{||} ^{01m} \rangle &= c_{01}, \\
\langle F_{ijk} D_{||} ^{10m} \rangle &= c_{10}, & \langle F_{ijk} D_{||} ^{11m} \rangle &= c_{11}, 
\end{align*}
where $ c_{00}, c_{01}, c_{10}, $ and $c_{11} $ are all elements of $ \lbrace -1,0,1 \rbrace $. 
The sum of these terms with appropriate sign is zero, again verifying equation \ref{zero}. 
\newline 
\newline 
We have shown that the invariant has degree $\leq 2 $.  We now show that $ \phi_{ijk} $ 
does not vanish on a knot with two singular crossings. 
We compute the  ``second derivative'' of $\phi_{ijk}$ and show there is a case when
$ \phi_{ijk}$  does not vanish. 
Let $K_{ab} $ denote a singular virtual knot diagram with two singular crossings, and let $K_{ \sigma}$
with $ \sigma \in \mathbb{Z}_2 ^2  $ denote a resolution of $K_{ab}$. 
Let $G_{ \sigma} $ denote the Gauss diagram corresponding to $K_{\sigma} $.   
The Gauss diagram $G_{00} $ of $K_{00} $ is shown in Figure \ref{fig:degree1}.
In $G_{00} $, the positive arrows $a$ and $b$ correspond to resolutions of the singular crossings $a$ and $b$. The dashed arrows indicate stacked arrows (positively signed, parallel, co-oriented arrows) and their labels indicate the number of arrows in the stack. 
\begin{figure}[htb] 
\scalebox{0.5}{
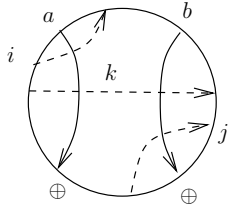 }
\caption{A Gauss diagram of $K_{00}$.}
\label{fig:degree1}
\end{figure}
\hspace{1cm}
\newline
\newline
\noindent 
As before, for each subdiagram $D_{||} ^{00} \subset G_{00} $ there is a set of corresponding subdiagrams: $D_{||}^{00}, D_{||}^{01}, D_{||}^{10}$, and $D_{||}^{11}$. 
We have that
\begin{align*}
\phi_{ijk} (K_{ab}) & =  \sum_{ \sigma}  (-1) ^{| \sigma|} \phi_{ijk} (K_{\sigma})  \\
&= \sum_{\sigma} (-1) ^{| \sigma|} \sum_{ D_{||}' \subset G_{ \sigma}} \langle 
F_{ijk}, D_{||}' \rangle  \\
&= \sum_{ D_{||}^{00} \subset G_{00} } \sum_{\sigma} (-1) ^{| \sigma |} \langle 
F_{ijk}, D_{||} ^{ \sigma } \rangle.
\end{align*}    
\newline
\newline
\noindent
We will show that $ \phi_{ijk} (K_{ab}) \neq 0 $ by considering families of corresponding subdiagrams.
If we consider families where if $D_{||} ^{\sigma} $ contains zero or one arrow obtained by resolving a singularity, then $ \phi_{ijk} $ applied to the signed sum of the family of diagrams is zero. 
There is a unique family of corresponding diagrams where both arrows in the subdiagram are resolutions of the singular points. In this case,
\begin{equation*}
\langle F_{ijk}, D_{||} ^{00} \rangle - \langle F_{ijk}, D_{||} ^{01} \rangle -\langle F_{ijk}, D_{||} ^{10} \rangle + \langle F_{ijk}, D_{||} ^{11} \rangle \neq 0. 
\end{equation*}
\newline
\newline
\noindent
The subdiagram $D_{||}^{00} $ is  shown below and the structure of the other subdiagrams $D_{||} ^{\sigma} $ in the set of corresponding diagrams. 
\[
\begin{array}{cc}
D_{00}=\begin{array}{c}\scalebox{.5}{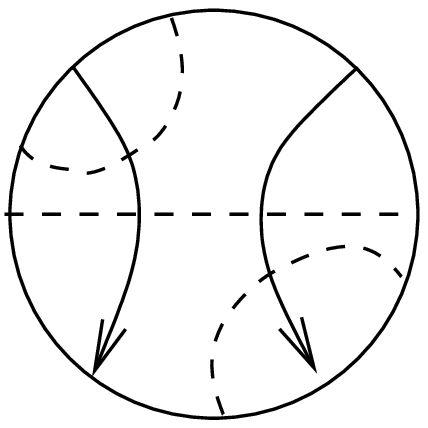} \end{array}, & D_{01}=\begin{array}{c}\scalebox{.5}{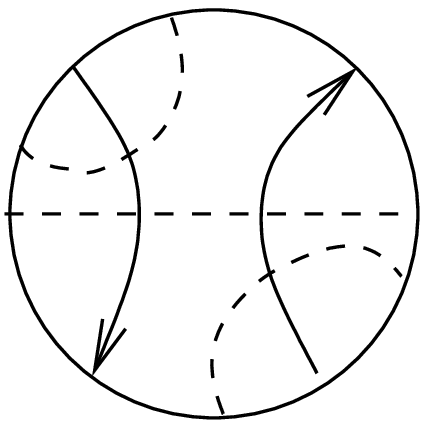}\end{array}, \\
D_{10}=\begin{array}{c}\scalebox{.5}{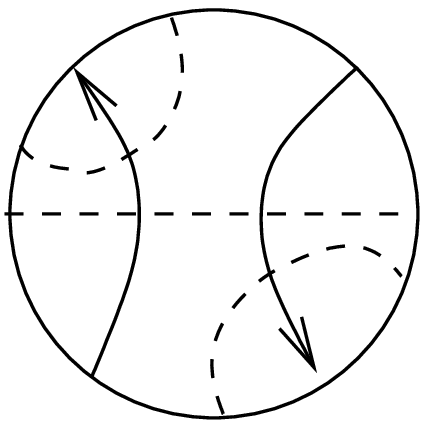} \end{array}, & 
D_{11}=\begin{array}{c}\scalebox{.5}{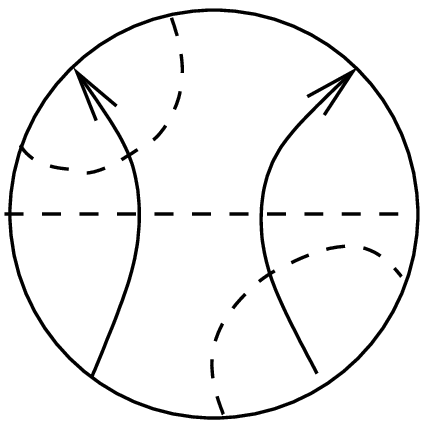} \end{array}. \\
\end{array}
\]
Evaluating the family of diagrams gives: 
\begin{align*}
\langle F_{ijk} , D_{00} \rangle &=1,  & \langle F_{ijk} ,  D_{01} \rangle &=0,  \\
\langle F_{ijk} , D_{10} \rangle &=0,  & \langle F_{ijk} , D_{11} \rangle &=0.
\end{align*}
Then $ \phi_{ijk} (K_{ab}) = 1 $ and $\phi_{i,j,k}$ is not a finite-type invariant of degree $ \le 1$.
\end{proof}

\subsubsection{Three Loop Framed Invariant} We prove in this section that the three loop framed isotopy invariant $ \Phi^{fr} $ is a degree one finite-type invariant.  

\begin{remark} We use a summation symbol in equation \ref{finite}. The group $A$ is  multiplicative and every element is its own inverse. As a result, equation \ref{finite} is written: 

\begin{equation}
\prod_{ \sigma}   v (K_{\sigma}) = 1.
\end{equation}
\end{remark}

\begin{theorem}
The invariant $ \Phi^{fr}(K) $ is finite type invariant of degree one.
\end{theorem}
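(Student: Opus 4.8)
The plan is to verify the multiplicative finite-type condition directly from the product formula for $\Phi^{fr}$. By the Remark preceding the theorem, $\Phi^{fr}$ has degree $\le 1$ precisely when, for every singular diagram $K_{ab}$ with two singular crossings $a$ and $b$, the product $\prod_{\sigma \in \mathbb{Z}_2^2} \Phi^{fr}(K_\sigma) = 1$, and it fails to have degree $\le 0$ exactly when some one-singular-crossing diagram $K_a$ has $\Phi^{fr}(K_0)\Phi^{fr}(K_1) \ne 1$. First I would isolate the behaviour of the ingredients of $\Phi^{fr}$ under resolution. Two facts are needed. (i) The set $\mathcal{P}(K_\sigma)$ is independent of $\sigma$: whether two arrows are non-intersecting chords depends only on the cyclic order of their endpoints on the circle, which is unaffected by a resolution (which merely reverses an arrow and flips its sign). (ii) Resolving $a$ with opposite sign reverses the arrow $a$, hence sends $int_a(x)\to -int_a(x)$ for every $x\in N_a$ and so $w(a)\to -w(a)$, while for any crossing $c\ne a$ the term $sign(a)\,int_c(a)$ in $w(c)$ is invariant because both factors change sign. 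Thus across the four resolutions $w(c)$ is unchanged for all $c\notin\{a,b\}$, $w(a)$ depends only on the resolution of $a$ (values $\alpha,-\alpha$), and $w(b)$ only on that of $b$ (values $\beta,-\beta$). I would verify (ii) carefully from Figure \ref{fig:intx}.

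With these facts the degree $\le 1$ claim is bookkeeping, organized by the type of pair $\{p,q\}$ in $\mathcal{P}:=\mathcal{P}(K_\sigma)$. A pair with $p,q\notin\{a,b\}$ contributes the same factor $A_{w(p)w(q)}$ in all four resolutions, giving $A_{w(p)w(q)}^4=1$. A pair $\{a,q\}$ with $q\notin\{a,b\}$ contributes $A_{\alpha,w(q)}$ twice and $A_{-\alpha,w(q)}$ twice, hence $1$; the pairs $\{b,q\}$ cancel the same way. The only pair whose four factors genuinely differ is $\{a,b\}$, when it lies in $\mathcal{P}$, contributing $A_{\alpha\beta}A_{\alpha,-\beta}A_{-\alpha,\beta}A_{-\alpha,-\beta}$. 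The crux is to show this equals $1$: using $A_{ij}=A_{ji}$ and $A_{ij}A_{jk}=A_{ik}$ one gets $A_{\alpha\beta}A_{\alpha,-\beta}=A_{\beta,-\beta}$ and likewise $A_{-\alpha,\beta}A_{-\alpha,-\beta}=A_{\beta,-\beta}$, so the product is $A_{\beta,-\beta}^2=1$ (alternatively, apply $f$ and observe the four runs of ones cancel mod $2$). Summing over pair types gives $\prod_\sigma \Phi^{fr}(K_\sigma)=1$.

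To see the degree is exactly one, I would compute the first derivative for a single singular crossing $a$. The same bookkeeping shows pairs avoiding $a$ cancel and each pair $\{a,q\}$ contributes $A_{w_0,w(q)}A_{-w_0,w(q)}=A_{w_0,-w_0}$, whence $\Phi^{fr}(K_0)\Phi^{fr}(K_1)=A_{w_0,-w_0}^{\,m}$, where $w_0=w(a)$ in $K_0$ and $m$ is the number of crossings $q$ with $\{a,q\}\in\mathcal{P}$. I would then exhibit a small Gauss diagram — for instance three positive arrows in which $a$ crosses exactly one other arrow and misses the third — so that $m=1$ and $w_0=\pm 1$; then $A_{w_0,-w_0}=A_{1,-1}\ne 1$, since $f(A_{1,-1})=e_{-1}+e_0\ne\vec{0}$. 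This singular knot witnesses that $\Phi^{fr}$ is not of degree $\le 0$.

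I anticipate the main obstacle to be the sign analysis in step (ii) — establishing cleanly that $w(a)$ negates under resolution while all other weights stay fixed, including the case in which $a$ and $b$ intersect — together with the group identity $A_{\alpha\beta}A_{\alpha,-\beta}A_{-\alpha,\beta}A_{-\alpha,-\beta}=1$, which is precisely what kills the otherwise-surviving $\{a,b\}$ term and is therefore the real content of the degree bound.
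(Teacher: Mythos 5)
Your proof is correct and follows essentially the same route as the paper: decompose the product over resolutions into sets of corresponding pairs, show pairs involving zero or one singular crossing contribute trivially, and reduce the surviving $\{a,b\}$ term to the identity $A_{\alpha\beta}A_{\alpha,-\beta}A_{-\alpha,\beta}A_{-\alpha,-\beta}=A_{\beta,-\beta}^2=1$ (which the paper leaves as ``computation shows''). Your witness for degree exactly one (a three-arrow diagram with $m=1$, $w_0=\pm1$, giving $A_{1,-1}\ne 1$) differs from the paper's stacked-arrow example $G_0,G_1$ but is equally valid and in fact simpler.
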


\begin{proof} We consider a singular virtual knot diagram $K_{ab} $ with distinct singular crossings $a$ and $b$. Let $ \sigma \in \mathbb{Z}_2 ^2 $.  Consider $ \lbrace p, q \rbrace  \in 
\mathcal{P} (K_{00}) $, we say that $ \lbrace p^{ \sigma} , q^{ \sigma}  \rbrace
\in \mathcal{P} (K_{ \sigma} ) $ corresponds to  $ \lbrace p, q \rbrace  \in 
\mathcal{P} (K) $ if $K_{p^{ \sigma} q^{\sigma}} $ can be obtained from $ K_{pq} $ by switching a subset of the resolutions of the singular crossings.  Hence, for each $  \lbrace p, q \rbrace \in 
\mathcal{P} (K_{00}) $ we obtain a set of four corresponding crossing pairs. 
 We have that
\begin{align*} 
\Phi^{fr} (K_{ab}) &= \prod_{ \sigma} \Phi ^{fr} (K_{ \sigma}) \\
&= \prod_{ \sigma}  \prod_{ \lbrace p, q \rbrace \in \mathcal{P} (K_{ \sigma})} A_{w(p),w(q)} \\
&= \prod_{ \lbrace p, q \rbrace \in \mathcal{P} (K_{00}) } \prod_{ \sigma} A_{w(p^{ \sigma}),
w(q^{ \sigma})}.
\end{align*}
We will show that:
\begin{equation*}
\prod_{ \sigma} A_{w(p^{ \sigma}),
w(q^{ \sigma})} =1.
\end{equation*}
That is, the product obtained from a set of four corresponding crossing pairs is one.
\newline
\newline
\noindent Suppose that the set $ \lbrace p, q \rbrace \in \mathcal{P} (K_{00}) $ does not contain a resolution of $ a$ or $b$. Then $w(p) = w( p^{ \sigma}) $ and $ w(q) = w( q^{ \sigma}) $. 
Hence,
\begin{equation*}
\prod_{ \sigma} A_{w(p^{ \sigma}),
w(q^{ \sigma})} =(A_{w(p), w(q)} )^4 =1.
\end{equation*}
\newline
\newline
\noindent Suppose that the set $ \lbrace p, q \rbrace \in \mathcal{P} (K_{00}) $ contains the resolution of $ a$. If $p$ is a resolution of $a$, then $w(p) = w( p^{01}) $,
$w(p) = -w(p^{10}) = -w(p^{11})$ and $ w(q) = w( q^{ \sigma}) $ for all $ \sigma \in \mathbb{Z}_2 ^2$. Now:
\begin{equation*}
\prod_{ \sigma} A_{w(p^{ \sigma}),
w(q^{ \sigma})} =(A_{w(p), w(q)} )^2  (A_{-w(p), w(q)} )^2=1.
\end{equation*}
\newline
\newline
\noindent Suppose that the set $ \lbrace p, q \rbrace \in \mathcal{P} (K_{00}) $ contains the resolution of $ a$ and $b$. Let  $p$ (respectively $q$) be the resolution of $a$ (respectively $b$). Then 
\begin{gather*}
w(p) = w( p^{01}) = -w(p^{10}) = -w(p^{11}) \\
w(q) = w(q^{10}) = -w(q^{01}) = -w(q^{11}) .
\end{gather*}
Computation shows that
\begin{equation*}
\prod_{ \sigma} A_{w(p^{ \sigma}),
w(q^{ \sigma})} =1.
\end{equation*}
\newline
\newline
\noindent
We show that $ \Phi^{fr} $ is not degree zero. The Gauss diagrams,
$G_0 $ and $G_1 $, shown in Figure \ref{fig:phidegree1} correspond to the two resolutions of a singular knot diagram $K_a$.  In the Gauss diagrams, the dashed arrow represents a stack of
$i$ parallel, co-oriented, positive arrows and the arrow labeled $a$ corresponds to the singular crossing.
\begin{figure}[htb] 
\scalebox{0.5}{
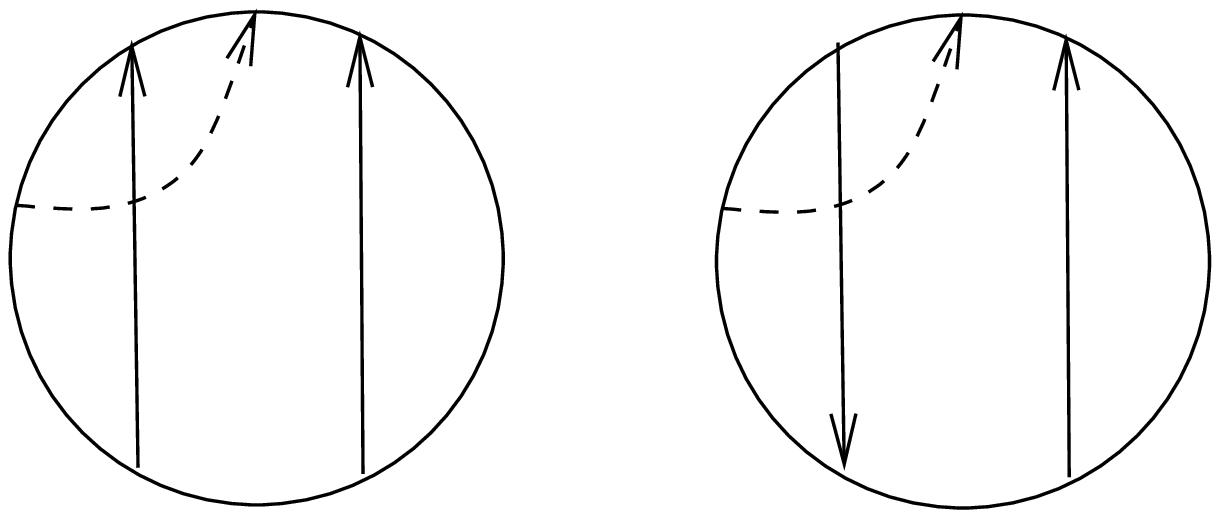 }
\caption{The Gauss diagrams $G_0 $ and $G_1$.}
\label{fig:phidegree1}
\end{figure}
\hspace{1cm}
\newline
\newline
\noindent
By definition, we have that:
\begin{align*}
\Phi^{fr} (K_a) = \Phi^{fr} (K_0)  \Phi^{fr} (K_1).
\end{align*} 
 If $i$ is odd then computation shows that $ \Phi^{fr} (K_0) = A_{-1, i} $ and 
$ \Phi^{fr} (K_1) = A_{-i,-1} $. Therefore
$\Phi^{fr} (K_a) = A_{-i, i} $
\noindent and $ \Phi ^{fr} $ is a degree one finite type invariant.
\end{proof} 

\subsection{Connected Sum} \label{sec_connectsum} We consider the connected sum of two diagrams. Let $K_1$ and $K_2 $ be virtual knot diagrams and let $K_1 \sharp K_2 $ denote a connected sum. A connected sum of two virtual knot diagrams is formed by selecting points on each diagram and then splicing the diagrams together at those points. The knot obtained from this operation depends on the selected points. 
\newline
\newline
As Gauss diagrams, $D_1 \sharp D_2 $ is formed by splicing the two Gauss diagrams at the point on the circle corresponding to the selected points on $K_1$ and $K_2$. 

\subsubsection{Three Loop Invariant}

\begin{theorem} If $ i,j,$ and $k$ are all non-zero then $ \phi_{ijk} (K_1 \sharp K_2 ) 
= \phi_{ijk} (K_1) + \phi_{ijk} (K_2) $. 
\end{theorem}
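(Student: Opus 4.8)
The plan is to classify the two-arrow subdiagrams $D_{||}'$ of the spliced Gauss diagram $D_1 \sharp D_2$ according to where their two arrows live, and to show that the ``mixed'' subdiagrams contribute nothing precisely because $i,j,k$ are all non-zero. First I would record the structure of the connected sum at the level of Gauss diagrams: cutting each circle at its chosen splice point and regluing produces a single circle consisting of an arc $\mathbf{a}_1$ carrying all the arrows of $D_1$ and a disjoint arc $\mathbf{a}_2$ carrying all the arrows of $D_2$, the two arcs meeting at the two splice junctions. In particular every arrow of $D_1$ has both endpoints in $\mathbf{a}_1$, every arrow of $D_2$ has both endpoints in $\mathbf{a}_2$, and no arrow of $D_1$ intersects an arrow of $D_2$. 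Consequently each subdiagram $D_{||}'$ of two non-intersecting arrows is of exactly one of three types: both arrows in $D_1$, both arrows in $D_2$, or one arrow in each.

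Next I would treat the two ``pure'' cases. Suppose both arrows of $D_{||}'$ lie in $D_1$. Smoothing them yields three components $A_1,A_2,A_3$; since the arc $\mathbf{a}_2$ is connected and contains no endpoint of either smoothing arrow, all of $\mathbf{a}_2$ — hence every arrow of $D_2$ — lies in a single one of these components. Therefore the arrows of $D_2$ contribute only to an internal weight $w_{mm}$ and never to any of the region weights $w_{1,2},w_{1,3},w_{2,3}$; moreover, inserting $\mathbf{a}_2$ alters neither the cyclic configuration of the two smoothing arrows nor the intersection numbers $int_{a_{ij}}(\cdot)$ of the surviving $D_1$-arrows against the separating arrows $a_{ij}$ (which are themselves arrows of $D_1$). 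It follows that $\langle F_{i,j,k}, D_{||}'\rangle$ is computed exactly as it would be in $D_1$ alone, so the type-$D_1$ subdiagrams contribute precisely $\phi_{i,j,k}(K_1)$; by the symmetric argument the type-$D_2$ subdiagrams contribute $\phi_{i,j,k}(K_2)$.

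The crux is the mixed case, $D_{||}' = \{c_1,c_2\}$ with $c_1$ in $D_1$ and $c_2$ in $D_2$. I would analyze the oriented smoothing directly. Since the four endpoints occur in the cyclic order ``both endpoints of $c_1$, then both endpoints of $c_2$'', the two arrows are side by side (non-nested), and the oriented smoothing pinches off the arc interior to $c_1$ as one component $A_1$, the arc interior to $c_2$ as a second component $A_2$, and joins the two remaining arcs into the third component $A_3$. Now $A_1 \subset \mathbf{a}_1$ and $A_2 \subset \mathbf{a}_2$; since no surviving arrow has one endpoint in $\mathbf{a}_1$ and the other in $\mathbf{a}_2$, there are no arrows running between $A_1$ and $A_2$, whence $w_{1,2}=0$. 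Because $i,j,k$ are all non-zero, matching $D_{||}'$ against $F_{i,j,k}$ would force each of the three region weights to be a non-zero label, so a subdiagram carrying a $0$ weight cannot match and $\langle F_{i,j,k}, D_{||}'\rangle = 0$ for every mixed subdiagram. Summing the three contributions yields $\phi_{i,j,k}(K_1\sharp K_2)=\phi_{i,j,k}(K_1)+\phi_{i,j,k}(K_2)$.

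The main obstacle I expect is the bookkeeping in the mixed case: pinning down exactly which three components the oriented smoothing produces and confirming that the two interior components lie in the two separate arcs, as this is exactly where the hypothesis $i\neq 0$, $j\neq 0$, $k\neq 0$ enters to kill the term. A secondary point requiring care is the claim in the pure case that the extra arrows and arc inserted into one region genuinely leave $\langle F_{i,j,k},D_{||}'\rangle$ unchanged; I would make this rigorous by verifying that each region weight $w_{ij}$ and each relevant intersection number is computed from the same $D_1$-arrows in the same relative position as in $D_1$.
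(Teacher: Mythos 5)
Your proof is correct and follows essentially the same route as the paper's: the key step in both is that for a mixed pair (one arrow from each factor) the weight between the two interior regions vanishes because no arrow of $D_1 \sharp D_2$ joins the two arcs, so the hypothesis that $i,j,k$ are all non-zero kills every mixed term. The paper's proof records only this mixed case and leaves the pure cases implicit, whereas you also verify carefully that subdiagrams lying entirely in one factor are counted exactly as they are in that factor alone.
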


\begin{proof} Let $D_i $ be the Gauss diagram corresponding to $K_i$. Let $a_i$ be an arrow in $D_i$ and let $D_{a_1,a_2}$ denote the subdiagram of $D_1 \sharp D_2$ containing $a_1$ and $a_2$. Since $a_1$ and $a_2$ are non-intersecting, $D_{a_1,a_2}$ is ``counted'' by $\phi_{i,j,k}$. However, there are no arrows of $D_1 \sharp D_2$ which cross both $a_1$ and $a_2$. Hence $w_{13}=0$ and the result follows.
\end{proof} 

\subsubsection{Three Loop Framed Invariant}
\begin{theorem} \label{connect} Let $K_1 $ be a Gauss diagram with arrows $ \lbrace v_1, v_2, \ldots v_n \rbrace $ with corresponding weights $ p_1, p_2, \ldots p_n$. Let $K_2 $ be a Gauss diagram with arrows $ \lbrace w_1, w_2, \ldots w_m \rbrace $ and corresponding weights
$ q_1, q_2, \ldots q_m $. For $n$ even, let $E =  A_{p_1 p_2} A_{p_3 p_4} \ldots A_{p_{n-1} p_n}$. For both $n$ and $m$ odd, let $ O_1 = A_{p_1 p_2} A_{p_3 p_4} \ldots A_{p_{n-2} p_{n-1}}$ and 
$O_2 = A_{q_1 q_2} A_{q_3 q_4} \ldots A_{q_{m-2} q_{m-1}}$. Then:
\begin{equation}
\frac{\Phi ^{fr} (K_1 \sharp K_2)}{\Phi ^{fr}(K_1) \Phi ^{fr}(K_2)}  = \begin{cases} 1  &  \text{if } m,n \text{even} \\ E_1  & \text{if } n \text{ even, } m \text{ odd} \\
O_1 O_2   A_{p_n q_m} & \text{if } m,n \text{ odd} .  \end{cases}
\end{equation}
\end{theorem}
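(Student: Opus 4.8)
The plan is to compute $\Phi^{fr}(K_1 \sharp K_2)$ directly from the definition and compare the resulting product over $\mathcal{P}(K_1\sharp K_2)$ with the products defining $\Phi^{fr}(K_1)$ and $\Phi^{fr}(K_2)$. The first thing I would record is the effect of the splice on weights. Forming the connected sum cuts both circles at single points and reglues, so every arrow of $D_1$ lies on one arc of the new circle and every arrow of $D_2$ on the complementary arc. Hence an arrow $v_i$ of $D_1$ intersects exactly the same arrows in $D_1\sharp D_2$ as it did in $D_1$ (no arrow of $D_2$ can cross it), so the set $N_{v_i}$ is unchanged and therefore $w(v_i)=p_i$; likewise $w(w_j)=q_j$. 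This geometric observation is the key input.

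Next I would partition $\mathcal{P}(K_1\sharp K_2)$, the set of non-intersecting arrow pairs. Such a pair has both arrows in $D_1$, both in $D_2$, or one in each. Because the cyclic order of endpoints within each arc is preserved, the first type reproduces exactly $\mathcal{P}(K_1)$ and the second exactly $\mathcal{P}(K_2)$; and since an arrow of $D_1$ and an arrow of $D_2$ live on disjoint arcs, every mixed pair $\{v_i,w_j\}$ is non-intersecting, so all $nm$ of them occur. Combining this with weight preservation gives
\[
\Phi^{fr}(K_1\sharp K_2)=\Phi^{fr}(K_1)\,\Phi^{fr}(K_2)\prod_{i=1}^{n}\prod_{j=1}^{m}A_{p_i q_j},
\]
so the ratio in the statement equals the cross-term $\prod_{i,j}A_{p_i q_j}$. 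This already shows the answer depends only on the multisets of weights, so the choice of splice points (and the noncommutativity of $\sharp$) is irrelevant.

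The remaining work is to simplify this product using the relations in $A$. The crucial identity is the telescoping relation: since $A_{ab}=A_{ba}$ and $A_{ab}A_{bc}=A_{ac}$, for a fixed $i$ one has $A_{p_i q_r}A_{p_i q_s}=A_{q_r q_s}$. Grouping the inner product $\prod_{j}A_{p_i q_j}$ into consecutive pairs therefore collapses it to $A_{q_1 q_2}A_{q_3 q_4}\cdots$, with a single leftover factor $A_{p_i q_m}$ when $m$ is odd. Taking the product over $i$, using that $A$ is abelian with $A_{ij}^2=1$ to reduce exponents modulo $2$, and finally pairing the surviving $A_{p_i q_m}$ factors among themselves by the same identity, the three parity cases yield $1$ when $m,n$ are even, $E$ when $n$ is even and $m$ is odd (so the ``$E_1$'' in the statement should read $E$), and $O_1O_2A_{p_n q_m}$ when both are odd.

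I expect the only genuine obstacle to be the index bookkeeping in this last step: one must track exactly which factors pair off and which single factor survives in each parity case, and confirm the surviving products are precisely the expressions $E$, $O_1$, $O_2$ as indexed. Everything else is immediate from the geometry of the splice. As an independent check on the algebra, I would apply the homomorphism $g\colon A\to\bigoplus_{i\in\mathbb{Z}}\mathbb{Z}_2$ given on generators by $g(A_{ij})=e_i+e_j$; this respects all four defining relations and is injective, since it factors as $S\circ f$ with $f$ the isomorphism of the previous theorem and $S(e_i)=e_i+e_{i+1}$ injective. Under $g$ the cross-term becomes $m\sum_i e_{p_i}+n\sum_j e_{q_j}$ taken modulo $2$, and a short computation confirms this matches $g$ applied to each claimed answer.
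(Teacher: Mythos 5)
Your proof is correct and follows essentially the same route as the paper's: the ratio reduces to the cross-term $\prod_{i,j} A_{p_i q_j}$ over the $nm$ mixed non-intersecting pairs, which is then collapsed using the relations of $A$ (you telescope along columns where the paper telescopes along rows, an immaterial difference). Your explicit justification that weights and the intra-diagram pairs are preserved by the splice, and your observation that the ``$E_1$'' in the statement should read $E$, are both correct.
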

\begin{proof} Suppose $n=2k$. For all $i,j$, the arrows $v_i $ and $w_j$ do not intersect. As a result, the pairs contribute to the invariant. We compute the product:
\begin{equation}
\begin{matrix} A_{p_1 q_1} & A_{p_1 q_2} & \ldots & A_{p_1 q_m} \\
 A_{p_2 q_1} & A_{p_2 q_2} & \ldots & A_{p_2 q_m} \\
\vdots  &  \ddots  &      & \vdots \\
 A_{p_{2k-1} q_1} & A_{p_{2k-1} q_2} & \ldots  & A_{p_{2k-1} q_m} \\
 A_{p_{2k} q_1} & A_{p_{2k} q_2} & \ldots &  A_{p_{2k} q_m} 
\end{matrix}
\end{equation}
\noindent This reduces to the product:
\begin{equation}
(A_{p_1 p_2})^m (A_{p_3 p_4})^m \ldots (A_{p_{2k-1} p_{2k}})^m.
\end{equation}
Now we compute the ratio:
\begin{equation}
\frac{\Phi (K_1 \sharp K_2)}{\Phi(K_1) \Phi(K_2)}  = \begin{cases} 1  &  \text{ if } m \text{ even} \\
     E  & \text{ if }  m \text{ odd} 
\end{cases}
\end{equation}
\noindent If $m$ and $n$ are odd, let $m=2j+1$ and let $n=2k+1$:
\begin{equation}
\begin{matrix} A_{p_1 q_1} & A_{p_1 q_2} & \ldots & A_{p_1 q_{2j}} & A_{p_1 q_{2j+1} } \\
 A_{p_2 q_1} & A_{p_2 q_2} & \ldots & A_{p_2 q_{2j}} & A_{p_2 q_{2j+1}}   \\
\vdots  &  \ddots  &    & \vdots  & \vdots \\
 A_{p_{2k-1} q_1} & A_{p_{2k-1} q_2} & \ldots & A_{p_{2k-1} q_{2j}} & A_{p_{2k-1} q_{2j+1}} \\
 A_{p_{2k} q_1} & A_{p_{2k} q_2} & \ldots & A_{p_{2k} q_{2j} } & A_{p_{2k} q_{2j+1} }\\
 A_{p_{2k +1} q_1} & A_{p_{2k+1} q_2} & \ldots & A_{p_{2k+1} q_{2j}} & A_{p_{2k+1} q_{2j+1}}  \\
\end{matrix}
\end{equation}
\noindent This product reduces to:
\begin{equation} 
(A_{p_1 p_2} A_{p_3 p_4} \ldots A_{p_{n-2} p_{n-1}}) ( A_{q_1 q_2} A_{q_3 q_4} \ldots 
A_{ q_{m-2} q_{m-1}} ) A_{ p_n q_m},
\end{equation}
which is equal to $O_1 O_2 A_{p_n q_m}. $
\end{proof}

\begin{corollary} Let  $K_1 $ and $K_2 $  be virtual knot diagrams.
\begin{enumerate}
\item If $K_1 $ and $K_2 $ have even writhe then $ \Phi ^{fr} (K_1 \sharp K_2 ) = 
\Phi ^{fr} (K_1) \Phi ^{fr}(K_2) $. 
\item  If $ \tilde{K}_1 $ and $\tilde{K}_2 $ have odd writhe and $\tilde{K}_i $ is equivalent to 
a diagram $K_i$ with even writhe then $ \Phi ^{fr}(\tilde{K}_1 \sharp \tilde{K}_2) =  \Phi ^{fr} (K_1) 
\Phi ^{fr}( K_2) $.
\end{enumerate}  \end{corollary}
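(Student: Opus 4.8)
The plan is to read both statements off Theorem \ref{connect}, the only extra ingredient being the elementary observation that for any diagram the parity of the writhe equals the parity of the number of classical crossings: each crossing contributes a sign $\pm 1 \equiv 1 \pmod 2$, so $\mathrm{writhe}(K) \equiv (\text{number of arrows of } G_K) \pmod 2$. For part (1), if $K_1$ and $K_2$ have even writhe then their Gauss diagrams have an even number of arrows, i.e. $n$ and $m$ are both even, and the first case of Theorem \ref{connect} gives
\[
\frac{\Phi^{fr}(K_1 \sharp K_2)}{\Phi^{fr}(K_1)\,\Phi^{fr}(K_2)} = 1,
\]
which is exactly the assertion.

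For part (2) the writhes of $\tilde K_1,\tilde K_2$ are odd, so both have an odd number of arrows and the third case of Theorem \ref{connect} applies, giving $\Phi^{fr}(\tilde K_1 \sharp \tilde K_2) = \Phi^{fr}(\tilde K_1)\,\Phi^{fr}(\tilde K_2)\,O_1 O_2 A_{p_n q_m}$, where the $p_\ell$ (resp. $q_\ell$) are the weights of the arrows of $\tilde K_1$ (resp. $\tilde K_2$). The first step is to simplify this correction factor using the identity $A_{ij} = A_{0i}A_{0j}$, which follows from the defining relation $A_{i0}A_{0j} = A_{ij}$ together with $A_{i0} = A_{0i}$ (the symmetry lemma). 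Expanding each $A_{p_\ell p_{\ell+1}}$, $A_{q_\ell q_{\ell+1}}$, and $A_{p_n q_m}$ in this way and collecting the resulting factors, one obtains $O_1 O_2 A_{p_n q_m} = Q(\tilde K_1)\,Q(\tilde K_2)$, where for any diagram $K$ I write $Q(K) := \prod_{\text{crossings } a \text{ of } K} A_{0,w(a)}$. (This rewriting also makes transparent that the correction factor does not depend on the ordering of the arrows chosen in Theorem \ref{connect}.)

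The second step is to show that $Q$ is a \emph{full} virtual knot invariant, i.e. unchanged under every extended Reidemeister move; this is where I would lean on Proposition \ref{weightprop}. Non-classical moves and Reidemeister 3 moves preserve all weights, so they fix $Q$; a Reidemeister 2 move creates or deletes two crossings of equal weight $w$, changing $Q$ by $A_{0,w}^2 = 1$; and a Reidemeister 1 move introduces (or removes) an isolated arrow, which has weight $0$ and leaves every other weight unchanged, so $Q$ changes only by the trivial factor $A_{0,0} = A_{00} = 1$. Hence $Q(\tilde K_i) = Q(K_i)$ for $i=1,2$. Now I invoke Corollary \ref{mod2frame}(2): since $\tilde K_i$ is equivalent to $K_i$, has odd writhe, and $K_i$ has even writhe, $\Phi^{fr}(\tilde K_i) = \Phi^{fr}(K_i)\,Q(K_i)$. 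Combining this with $Q(\tilde K_i)=Q(K_i)$ and $Q(K_i)^2 = 1$ yields $\Phi^{fr}(\tilde K_i)\,Q(\tilde K_i) = \Phi^{fr}(K_i)$. Substituting into the identity of the previous paragraph and using that $A$ is abelian,
\[
\Phi^{fr}(\tilde K_1 \sharp \tilde K_2) = \bigl[\Phi^{fr}(\tilde K_1)Q(\tilde K_1)\bigr]\bigl[\Phi^{fr}(\tilde K_2)Q(\tilde K_2)\bigr] = \Phi^{fr}(K_1)\,\Phi^{fr}(K_2),
\]
which completes part (2).

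The only genuinely delicate point is the second step: recognizing the correction factor $O_1 O_2 A_{p_n q_m}$ of Theorem \ref{connect} as the crossing-wise product $Q(\tilde K_1)Q(\tilde K_2)$, and then verifying that $Q$ is invariant under \emph{all} Reidemeister moves. The Reidemeister 1 case is the one that must be checked by hand, and it works precisely because an introduced kink is an isolated arrow of weight $0$ with $A_{00}=1$; once the R1-invariance of $Q$ is in place, the rest is formal computation in the exponent-two abelian group $A$.
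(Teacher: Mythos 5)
Your proof is correct and follows the same route as the paper, which simply asserts that the corollary ``follows immediately from Theorem \ref{connect} and Corollary \ref{mod2frame}''. You have merely made the word ``immediately'' honest by supplying the two implicit details: the identification of the correction factor $O_1 O_2 A_{p_n q_m}$ with $\prod_a A_{0,w(a)}$ taken over all crossings of both diagrams (via $A_{ij}=A_{0i}A_{0j}$), and the check that this product is itself invariant under all extended Reidemeister moves so that it can be cancelled against the factor produced by Corollary \ref{mod2frame}(2).
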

\begin{proof} This follows immediately from Theorem \ref{connect} and Corollary \ref{mod2frame}. \end{proof}

\subsection{Geometric Symmetries} \label{sec_geometric} Let $K$ be a oriented virtual knot diagram. We consider the behavior of $\phi_{i,j,k}$ and $\Phi^{fr}$ under various symmetries. If $K$ is an oriented virtual knot diagram, let $K^{-1}$ denote the virtual knot diagram obtained by reversing the orientation of $K$.  As in classical knot theory, $K^{-1}$ is called the \emph{inverse} of $K$. If $K \leftrightarrow K^{-1}$, then $K$ is said to be \emph{invertible}.
\newline
\newline
\noindent Let $K:S^1 \to \mathbb{R}^2$ be an oriented virtual knot diagram. Let $l$ be a line in $\mathbb{R}^2$ not intersecting $l$. Let $\rho:\mathbb{R}^2\to \mathbb{R}^2$ be the reflection about $l$. The \emph{mirror image} of $K$ is the oriented knot diagram $\rho \circ K$. The mirror image of $K$ will be denoted $\overline{K}$ (see Figure \ref{fig:mirrors}).
\newline
\newline
\noindent The \emph{switch} of $K$, denoted $K^s$, is defined to  be the knot obtained from $K$ by performing a crossing change at every crossing in $K$ (see Figure \ref{fig:mirrors}).  This is also called the \emph{horizontal mirror} of $K$. 

\begin{figure}[htb] 
\scalebox{0.3}{ 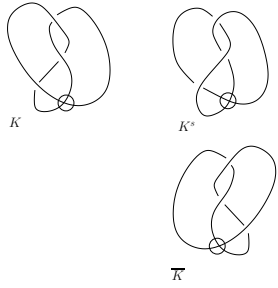}
\caption{Geometric Symmetries of $K$ }
\label{fig:mirrors}
\end{figure}
\hspace{1cm}
\newline
\newline

\subsubsection{Three Loop Invariant} We prove that the three loop invariant does not detect any of the aforementioned geometric symmetries.

\begin{lemma} \label{lemm_3_loop} Let $K$ be an oriented virtual knot. Then: 
\[
\phi_{ijk} (K) = \phi _{ijk} (K^{-1})=\phi_{ijk} (\overline{K})=\phi_{ijk} (K^s).
\]
\end{lemma}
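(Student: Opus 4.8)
The plan is to analyze, for each of the three symmetries, the induced operation on the Gauss diagram $G_K$ and then to check that the only two pieces of data used by $\langle F_{i,j,k}, D_{||}' \rangle$—the triple of weights $(w_{1,2}, w_{1,3}, w_{2,3})$ and the sign $sign(D_{||}')$—are unchanged. First I would record how each symmetry acts on the arrows. Reversing the orientation ($K^{-1}$) reflects the Gauss circle, i.e. reverses its cyclic order, while preserving every arrow head/tail (over stays over) and every arrow sign (both strands at a crossing reverse). Passing to the mirror $\overline{K} = \rho \circ K$ keeps the same parametrization, hence the same cyclic order of endpoints and the same arrow directions, but flips the sign of every crossing. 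Passing to the switch $K^s$ keeps every endpoint in place but reverses the direction of every arrow (over and under are interchanged) and flips every sign.

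Next I would compute the effect on $int_c(x)$ and then on the weights. A planar reflection of the disk reverses the local model of Figure \ref{fig:intx}, so $K^{-1}$ sends $int_c(x) \mapsto -int_c(x)$ while preserving all signs; reversing both arrows of a crossing pair applies the reversal to the local model twice, so $K^s$ fixes each $int_c(x)$ while flipping all signs; and $\overline{K}$ fixes each $int_c(x)$ and flips all signs. In every case the inner signed sum $\sum_{c \in C_{i,j}} sign(c)\, int_{a_{ij}}(c)$ is either fixed or globally negated, and therefore its \emph{absolute value} $w_{i,j}$ is preserved. This is the crucial point, and it is exactly why the definition of $w_{i,j}$ was taken with an absolute value. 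At the same time $sign(D_{||}') = sign(a_1)sign(a_2)$ is preserved: under $K^{-1}$ both factors are unchanged, and under $\overline{K}$ and $K^s$ both factors are negated, so the product is fixed.

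With the invariant data in hand, I would set up the bijection on subdiagrams. Each symmetry is an involution that carries a two-non-intersecting-arrow subdiagram $D_{||}'$ of $G_K$ to a two-non-intersecting-arrow subdiagram of the transformed Gauss diagram (non-intersection depends on neither signs nor directions, so it is preserved), giving a bijection of the index set of the sum defining $\phi_{i,j,k}$. By the previous paragraph this bijection preserves the weight triple and the sign. It remains to see that the transformed labeled configuration again lies in the support of the formal sum $F_{i,j,k}$ with the same assignment of $i,j,k$ to the region pairs of Figure \ref{fig_region_label}. Concretely this amounts to checking that the set of three labeled diagrams in Figure \ref{fig_phi_ijk_defn} is closed under (i) reflection of the Gauss circle and (ii) simultaneous reversal of both arrows; the mirror $\overline{K}$ acts trivially on configurations and needs nothing further. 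Granting this closure, each term $\langle F_{i,j,k}, D_{||}' \rangle$ equals the corresponding term for the image subdiagram, and summing over the bijection yields $\phi_{i,j,k}(K) = \phi_{i,j,k}(K^{-1}) = \phi_{i,j,k}(\overline{K}) = \phi_{i,j,k}(K^s)$.

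The main obstacle is this last combinatorial closure step: one must verify directly that reflecting the Gauss circle and reversing both arrows each permute the three configurations of $F_{i,j,k}$ while carrying the region-pair labels to the matching region pairs, so that the ordered triple $(i,j,k)$ is respected. This is a finite case check over the three pictures in Figures \ref{fig_region_label}--\ref{fig_phi_ijk_defn} together with the smoothing types of a pair of non-intersecting arrows; the bookkeeping of which loop $A_\ell$ becomes which under reflection is the only place where care is needed. Everything else reduces to the two observations that $w_{i,j}$ is genuinely invariant under the symmetries because of the absolute value, and that $sign(D_{||}')$ is invariant because it is a product of two signs that flip together.
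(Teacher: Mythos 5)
Your argument is correct and follows essentially the same route as the paper's proof: analyze how each symmetry acts on the Gauss diagram, observe that the signed sums defining the $w_{i,j}$ are at worst globally negated so that the absolute values are preserved, and note that $sign(D_{||}')$ is preserved because the two arrow signs either both flip or both stay fixed. You are in fact more explicit than the paper about the remaining finite check that each symmetry permutes the three labeled configurations of $F_{i,j,k}$ compatibly with the assignment of $i,j,k$ to region pairs; the paper's proof leaves that step implicit.
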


\begin{proof}
Let $D_K$ be a Gauss diagram of the oriented knot $K$. A Gauss diagram for $K^s $ is obtained by changing the sign and direction of every arrow in $D_K $. We denote this Gauss diagram as $D_{K^s} $. If $y$ is an arrow of $D_K$, we denote its corresponding arrow of $D_{K^s}$ as $ \overline{y} $. The set of arrows that intersect $ \overline{y} $ are the same as those that intersect
$ y$.  The $ sign(y) = - sign(\overline{y} )$, but the the relative orientation of the intersection is the same in both diagrams. As a result, the index of arrows changes by sign and there is no change in the absolute value of the index. Hence, $ \phi _{ijk} (K) = \phi _{ijk} (K^s)$. 
\newline
\newline
\noindent Next consider $K^{-1}$. A Gauss diagram $D_{K^{-1}}$ of $K^{-1}$ is obtained from $D_K $ by reversing the direction of travel around the circle of the Gauss diagram.  The intersections between the arrows do not change, but the orientation of every intersection changes. If $y$ points from left to right relative to an upward pointing $x$ then $\overline{y} $ points from right to left. Hence there is no change to the absolute value of the index. Then $\phi_{ijk} (K) = \phi_{ijk} (K^{-1})$.
\newline
\newline
\noindent A Gauss diagram $D_{\overline{K}}$ for $ \overline{K}$ can be obtained by changing the sign of each arrows in $D_K $. As in the previous cases, it follows that $\phi_{ijk} (K) = \phi_{ijk} (\overline{K}) $. 
\end{proof}

\subsubsection{Three Loop Framed Invariant} On the other hand, the three loop framed invariant \emph{can} be used to detect geometric symmetries. 
\newline
\newline
Before proving this fact, we need a lemma and some additional notation. In $\mathbb{Z}_2^{\infty}$, label each of the copies of $\mathbb{Z}_2$. Hence, we have $\displaystyle{\mathbb{Z}_2^{\infty}=\bigoplus_{i=-\infty}^{\infty} (\mathbb{Z}_2)_i}$. Let $m:\mathbb{Z}_2^{\infty} \to \mathbb{Z}_2^{\infty}$ be the homomorphism which maps coordinate $i$ to coordinate $i-1$. It is clear that $m$ is an isomorphism.
 
\begin{lemma} Let $K$ be an oriented virtual knot invariant. Then:
\[
m \circ f \circ \Phi ^{fr} (K)=f \circ \Phi ^{fr}(K^{-1})=f \circ \Phi ^{fr}(\overline{K})=f \circ \Phi(K^s).
\]
\end{lemma}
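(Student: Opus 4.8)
The plan is to reduce the fourfold equality to a single transformation rule on Gauss diagrams together with one combinatorial identity about the isomorphism $f$. First I would treat the three right-hand expressions simultaneously by analysing how the Gauss diagram $D_K$ changes under each symmetry, exactly as in the proof of Lemma \ref{lemm_3_loop}: a Gauss diagram for $K^s$ is obtained from $D_K$ by reversing the sign \emph{and} direction of every arrow, one for $\overline{K}$ by reversing the sign of every arrow, and one for $K^{-1}$ by reversing the direction of travel around the circle. In all three cases the set $N_c$ attached to each arrow $c$ is unchanged, so two arrows intersect in the image diagram if and only if they do in $D_K$; hence the non-intersecting (three-component smoothing) condition defining $\mathcal{P}$ is preserved, and $\mathcal{P}(K^{-1})$, $\mathcal{P}(\overline{K})$, $\mathcal{P}(K^s)$ are each in canonical bijection with $\mathcal{P}(K)$. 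The same local bookkeeping shows that every weight is negated, $w(c)\mapsto -w(c)$, in each of the three diagrams: the sign of each contributing term $\mathrm{sign}(x)\,int_c(x)$ flips while $int$ is preserved for $K^s$ and $\overline{K}$, whereas $int$ flips while the sign is preserved for $K^{-1}$.

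Granting this, each symmetric diagram carries the same invariant,
\[
\Phi^{fr}(K^{-1})=\Phi^{fr}(\overline{K})=\Phi^{fr}(K^s)=\prod_{\{p,q\}\in\mathcal{P}(K)}A_{-w(p),\,-w(q)},
\]
which already establishes the three equalities among the right-hand sides. It remains to match this product with $m\circ f\circ\Phi^{fr}(K)$. I would package the negation of subscripts as a map $\nu\colon A\to A$ defined on generators by $\nu(A_{ij})=A_{-i,-j}$. Checking that $\nu$ respects the four defining relations of $A$ is immediate, since $A_{-i,-i}=1$, squares remain trivial, commutativity is preserved, and $A_{-i,-j}A_{-j,-k}=A_{-i,-k}$; thus $\nu$ is a well-defined automorphism. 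The previous display then reads $\Phi^{fr}(K^{*})=\nu\bigl(\Phi^{fr}(K)\bigr)$ for each symmetry $*$, and because $f$ is an isomorphism it suffices to prove the operator identity $f\circ\nu=m\circ f$, equivalently $m=f\circ\nu\circ f^{-1}$.

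Since $f$, $\nu$, and $m$ are homomorphisms, this identity need only be verified on the generators $e_i$ of $\mathbb{Z}_2^{\infty}$, where $f^{-1}(e_i)=A_{i,i+1}$. I would compute $f\circ\nu\circ f^{-1}(e_i)=f(A_{-i,-i-1})=f(A_{-i-1,-i})=e_{-i-1}$, and then identify the map $e_i\mapsto e_{-i-1}$ with $m$. The main obstacle is precisely this bookkeeping against the general definition of $f$: one must verify, for arbitrary (not necessarily adjacent) subscripts, that $f(A_{-i,-j})$ is the ``run of ones'' over the index range $[-\max(i,j),\,-\min(i,j)-1]$, and that $m$ carries $f(A_{ij})$ (the run over $[\min(i,j),\,\max(i,j)-1]$) onto that same vector. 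This is where the reflection of the run and its unit offset must be reconciled, and keeping track of the half-open nature of these index ranges is the only delicate point; once it is settled, the lemma follows by applying $\nu$ (equivalently $m$) termwise across the product, everything else being formal manipulation of homomorphisms together with the already-established negation of weights.
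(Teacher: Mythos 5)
Your proof is correct and follows essentially the same route as the paper's: negation of every weight under each symmetry, the canonical bijection of non-intersecting pairs, and verification of the operator identity $f\circ\nu=m\circ f$ on the generators $A_{i,i+1}$ of $A$ (the paper proves only the first equality and leaves the other symmetries as an exercise, so your uniform treatment is if anything more complete). One caveat: your argument, like the paper's own proof, requires $m$ to act as the reflection $e_i\mapsto e_{-i-1}$ rather than as the shift ``coordinate $i$ to coordinate $i-1$'' that the paper's definition literally states; that definition is evidently a typo, and you have silently adopted the reading that the paper's proof itself uses.
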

\begin{proof} We prove the first equality only and leave the others as an exercise. Let $D_{K^{-1}}$ be a Gauss diagram of the oriented knot $K^{-1}$ as obtained in the proof of Lemma \ref{lemm_3_loop}.  We see again that, for all arrows $x$ of $D_K$, the weight of $\overline{x}$ in $D_{K}^{-1}$ is of opposite sign.
\newline
\newline
\noindent Also note that there is a one-to-one correspondence between pairs of non-intersecting arrows in $D_K$ and pairs of non-intersecting arrows in $D_K^{-1}$.
\newline
\newline
\noindent Finally, we note that $m \circ f (A_{i,i+1})=f(A_{-i-1,-i})$. Since $A$ is generated by the elements $A_{i,i+1}$, $-\infty<i<\infty$, this observation suffices to prove the lemma.
\newline
\newline
The other relations follow from similar investigations of the symmetries of Gauss diagrams. 
\end{proof}

\noindent The following theorem shows that the three loop invariant can be used to detect when a virtual knot is not equivalent to its inverse, not equivalent to its mirror image (see \cite{cheng_knot,cheng_link} for a similar result for the Cheng invariants), and not equivalent to its switch.

\begin{theorem} Let $K$ be an oriented virtual knot. If $f\circ\Phi ^{fr}(K) \ne m \circ f \circ \Phi ^{fr}(K)$, then $K$ is not equivalent as a virtual knot to its inverse, not equivalent to its mirror image, and not equivalent to its switch.
\end{theorem}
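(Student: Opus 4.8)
The plan is to prove each of the three non-equivalences by contraposition, using the immediately preceding lemma together with the writhe-parity results from Section~\ref{sec_frame_virt_isotopy}. In every case I would assume that $K$ \emph{is} equivalent (as a virtual knot) to the relevant symmetry, and derive the equality $f\circ\Phi^{fr}(K)=m\circ f\circ\Phi^{fr}(K)$, contradicting the hypothesis.

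Consider first the inverse $K^{-1}$. The central point is that $\Phi^{fr}$ is only a \emph{framed} invariant, so the bare assumption $K\leftrightarrow K^{-1}$ does not immediately yield $\Phi^{fr}(K)=\Phi^{fr}(K^{-1})$; one must control the writhe. Reversing the orientation of the whole knot preserves every crossing sign, so $\mathrm{writhe}(K^{-1})=\mathrm{writhe}(K)$, and in particular the two diagrams have equal writhe modulo $2$. By Corollary~\ref{mod2frame}(1) this forces $\Phi^{fr}(K)=\Phi^{fr}(K^{-1})$. Applying $f$ and invoking the preceding lemma, which gives $f\circ\Phi^{fr}(K^{-1})=m\circ f\circ\Phi^{fr}(K)$, I obtain
\[
f\circ\Phi^{fr}(K)=f\circ\Phi^{fr}(K^{-1})=m\circ f\circ\Phi^{fr}(K),
\]
contradicting the assumption $f\circ\Phi^{fr}(K)\neq m\circ f\circ\Phi^{fr}(K)$. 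Hence $K$ is not equivalent to $K^{-1}$.

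The cases of the mirror image $\overline{K}$ and the switch $K^{s}$ run along identical lines, and the key observation that makes the argument uniform is that all three symmetries preserve writhe modulo $2$. Reflection and a simultaneous crossing change at every crossing each \emph{negate} the writhe, so $\mathrm{writhe}(\overline{K})=\mathrm{writhe}(K^{s})=-\mathrm{writhe}(K)$; but for any integer $w$ one has $w\equiv -w\pmod{2}$, so the mod-$2$ writhe is unchanged in both cases. Thus Corollary~\ref{mod2frame}(1) again applies under the (contradiction) hypothesis that $K$ is equivalent to $\overline{K}$ (resp.\ $K^{s}$), giving $\Phi^{fr}(K)=\Phi^{fr}(\overline{K})$ (resp.\ $\Phi^{fr}(K)=\Phi^{fr}(K^{s})$), and then the corresponding equality in the preceding lemma closes out the contradiction exactly as before.

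The substantive content is not the final algebra but the observation that the writhe-parity behavior of the three symmetries lines up precisely with the hypothesis of Corollary~\ref{mod2frame}(1); this is the step to verify carefully, since it is what licenses upgrading a mere equivalence of virtual knots into an honest equality of the framed invariant $\Phi^{fr}$. Once that bridge is in place, the preceding lemma does all the remaining work, and the theorem follows immediately in all three cases.
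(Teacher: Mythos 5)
Your proof is correct and is essentially the paper's own argument, merely phrased in contrapositive form: both rest on the writhe-parity observations ($\mathrm{writhe}(K^{-1})=\mathrm{writhe}(K)$ and $\mathrm{writhe}(\overline{K})=\mathrm{writhe}(K^{s})=-\mathrm{writhe}(K)\equiv\mathrm{writhe}(K)\pmod 2$), Corollary~\ref{mod2frame} (or Theorem~\ref{writheandframed}), and the preceding lemma identifying $m\circ f\circ\Phi^{fr}(K)$ with $f\circ\Phi^{fr}$ of each symmetry. No gaps.
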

\begin{proof} Clearly, $\emph{writhe}(K)=\emph{writhe}(K^{-1})$. Therefore, if $f(\Phi^{fr}(K)) \ne m \circ f \circ \Phi^{fr}(K)$, we must have that $\Phi^{fr}(K) \ne \Phi^{fr}(K^{-1})$. By Theorem \ref{writheandframed}, we have that $K$ and $K^{-1}$ are inequivalent as virtual knots.
\newline
\newline
\noindent Since $\emph{writhe}(K)=-\emph{writhe}(\overline{K})=-\emph{writhe}(K^s)$, $\emph{writhe}(K)\equiv \emph{writhe}(\overline{K}) \equiv \emph{writhe}(K^s) \pmod 2$. Hence, by Corollary \ref{mod2frame}, we have that $K$ and $\overline{K}$ are inequivalent. Additionally, $K$ and $K^s$ are inequivalent.
\end{proof}

\noindent \textbf{Example:} For the oriented virtual knot below, we have that $f \circ \Phi  ^{fr}(K) \ne m \circ f \circ \Phi ^{fr} (K)$. The theorem then states that $K$ is not equivalent to its inverse, not equivalent to its mirror image, and equivalent to its switch.
\[
K=\begin{array}{c}\scalebox{.33}{\psfig{figure=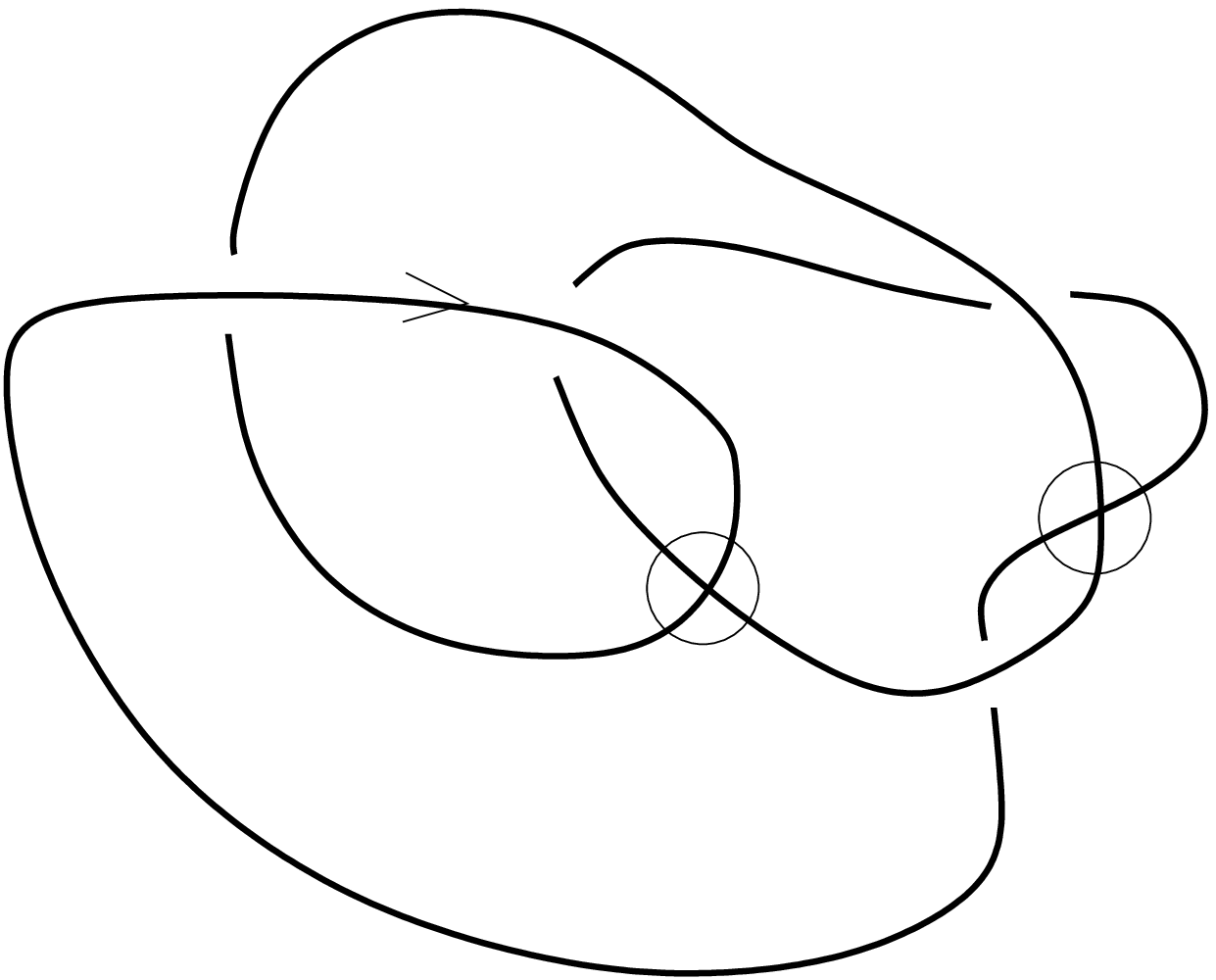}}\end{array}
\]

\subsection{Comparison with other invariants}
Let $D$ be a Gauss diagram and $x$ an arrow of $D$. By a \emph{way virtualization move} at $x$, we mean the diagram $D'$ which is obtained from $D$ by changing the way which $x$ points (i.e. it's direction). By a \emph{sign virtualization move} at $x$, we mean the diagram $D'$ which is obtained from $D$ by changing the sign of $x$ but not its direction. The Kauffman bracket is invariant under the way virtualization move \cite{kvirt}. The involutory quandle is invariant under the sign virtualization move \cite{kvirt}.

\subsubsection{Three Loop Invariant} We consider the Gauss diagram $D_1 $. The diagram $D_2$ is obtained from $D_1$ by a way virtualization. The diagram $D_3$ is obtained from $D_1$ by sign virtualization.  We calculate $ \phi_{203} $ for each of these diagrams and show that 
$\phi_{ijk} $ can detect crossing changes, and both sign and way virtualization. Hence, the three loop invariant can separate virtual knots which cannot be separated by the Kauffman bracket and can separate virtual knots which cannot be separated by the involutory quandle.

\begin{figure}[htb]
\[
\begin{array}{cc}
D_1=\begin{array}{c}\scalebox{.5}{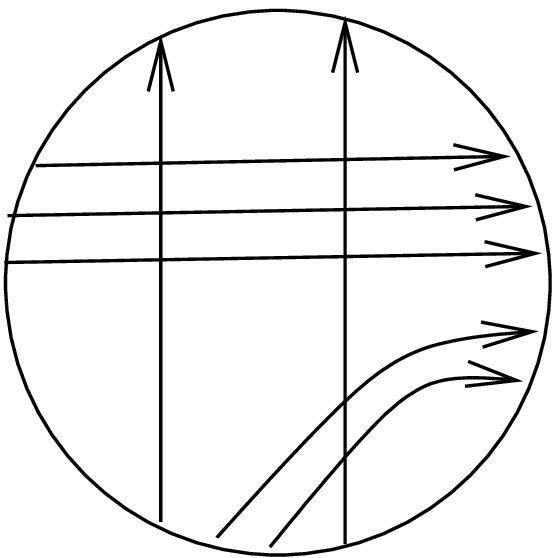} \end{array}, & D_2=\begin{array}{c}\scalebox{.5}{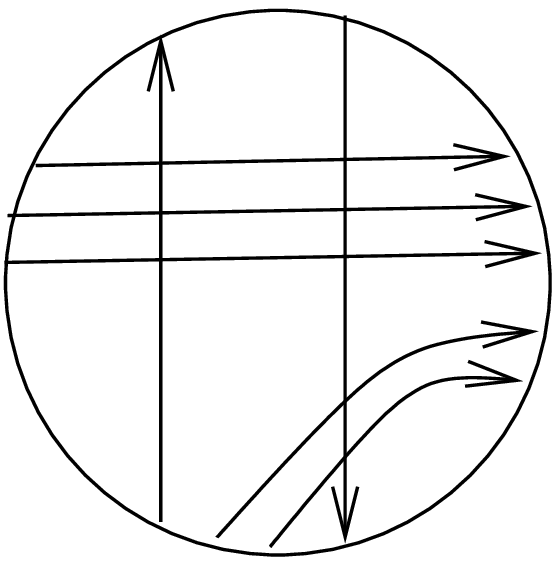}\end{array}, \\
D_3=\begin{array}{c}\scalebox{.5}{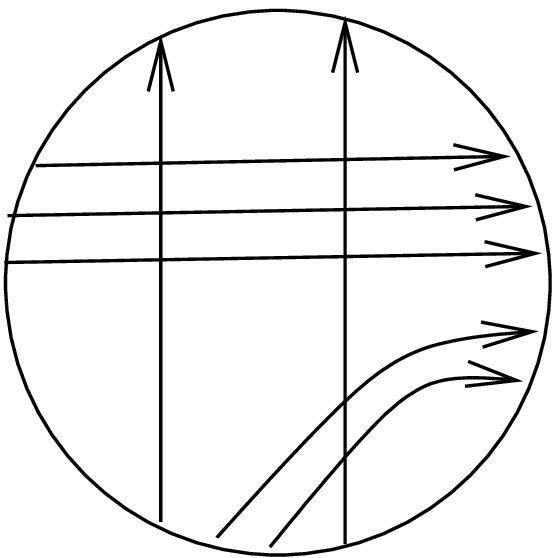} \end{array} & \\
\end{array}
\]
\caption{The three loop invariant detects both kinds of virtualizations: $\phi_{203}(D_1)=1,\,\,\phi_{203}(D_2)=0,\,\,\phi_{203} (D_3)=-1$.} \label{fig_three_loop_virt}
\end{figure}

\subsubsection{Three Loop Framed Invariant} The three loop framed invariant can detect both way virtualization and sign virtualization moves.  Hence, the three loop framed invariant can separate virtual knots which cannot be separated by the Kauffman bracket and can separate virtual knots which cannot be separated by the involutory quandle. In Figure \ref{fig_three_loop_fr_virt}, $D_2$ is obtained from $D_1$ by a way virtualization. $D_3$ is obtained from $D_1$ by  sign virtualization.

\begin{figure}[htb]
\[
\begin{array}{cc}
D_1=\begin{array}{c}\scalebox{.35}{\psfig{figure=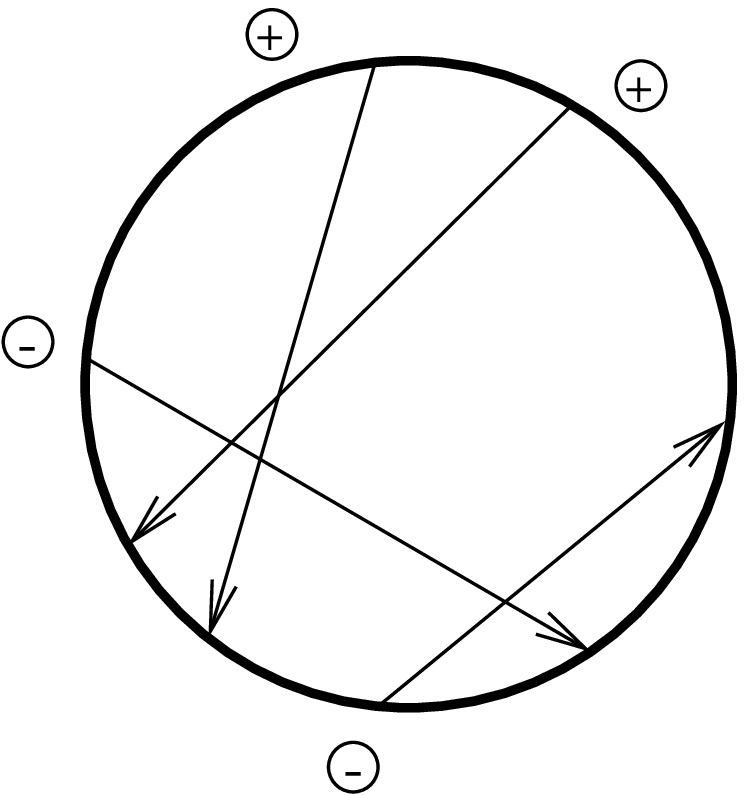}} \end{array}, & D_2=\begin{array}{c}\scalebox{.35}{\psfig{figure=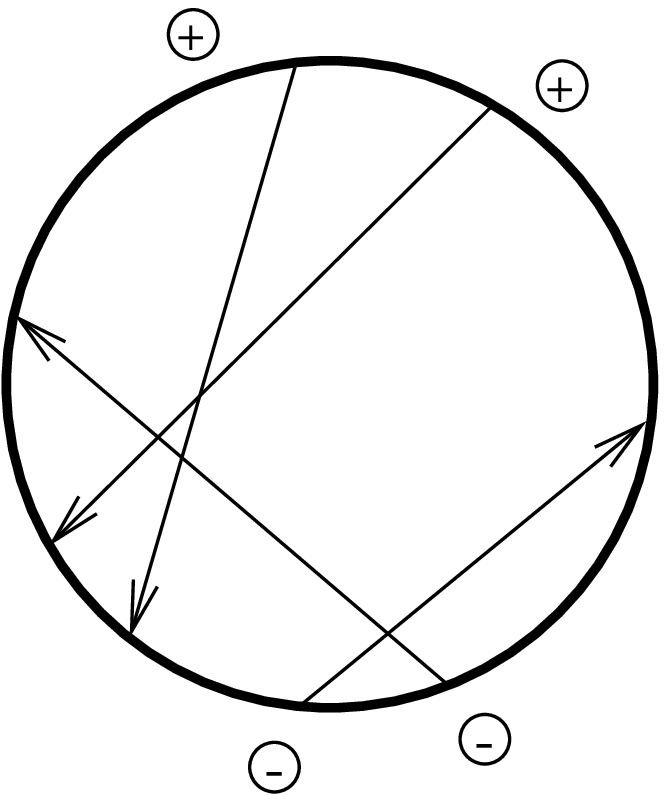}}\end{array}, \\
D_3=\begin{array}{c}\scalebox{.35}{\psfig{figure=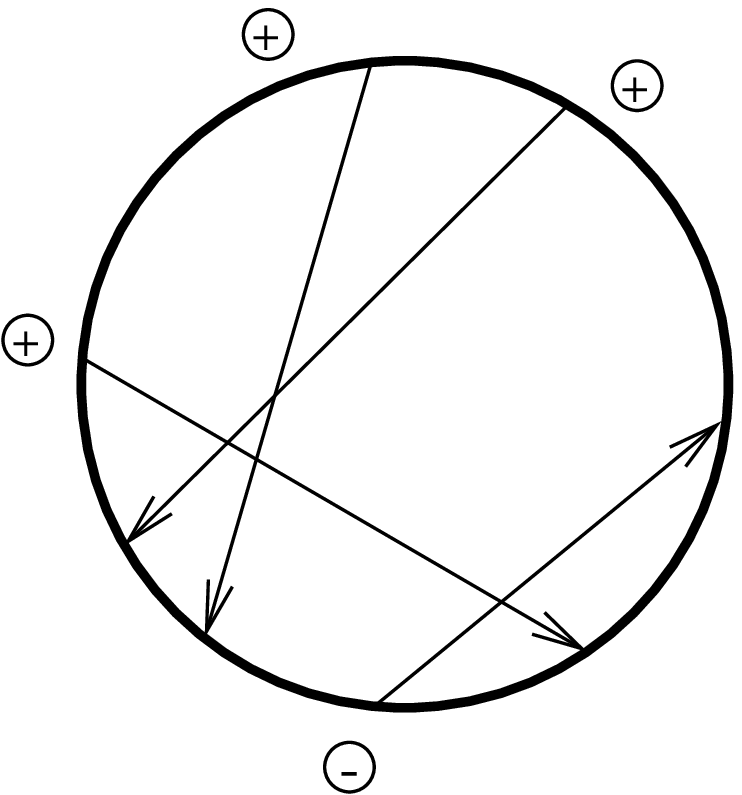}} \end{array} & \\
\end{array}
\]
\caption{The three loop framed invariant detects both kinds of virtualizations: $\Phi ^{fr}(D_1)=A_{-2,0},\,\,\Phi ^{fr}(D_2)=A_{0,2},\,\,\Phi ^{fr}(D_3)=A_{0,2}$.} \label{fig_three_loop_fr_virt}
\end{figure}

\section{Formalization of ``Analogues''} \label{sec_gvinariant} 

\noindent The previous two sections have discussed the three loop isotopy invariant and the three loop framed isotopy invariant. They have been referred to as the virtual knot ``analogues'' of the Grishanov-Vassiliev invariants of order 2 for knots in thickened surfaces. The term ``analogue'' has been used informally throughout. In the present section, we present a formal definition of ``analogue'' and prove using this definition that the $\phi_{ijk}$ are indeed virtual knot analogues of the Grishanov-Vassiliev invariants of order 2.

\subsection{Definition of Analogue} We begin with some terminology. Let $\Sigma$ be a closed oriented surface. Let $\mathscr{S}$ denote the set of such $\Sigma$.
\newline
\newline
Let $\Sigma \in \mathscr{S}$. Let $\tau$ be a knot in $\Sigma \times I$.  As follows in the usual knot theory of $\mathbb{R}^3=\mathbb{R}^2 \times \mathbb{R}$, knots in $\Sigma \times I$ may be represented as knot diagrams on $\Sigma$. Crossings are represented in the traditional sense and oriented knot diagrams on $\Sigma$ will have local writhe numbers. If $\tau$ is an oriented knot diagram on $\Sigma$, one may find a Gauss diagram $D_{\tau}$ of $\tau$ using the exact same procedure as that for virtual knots.
\newline
\newline
By the pair $(\Sigma,\tau)$, we denote either a knot in $\Sigma \times I$ or a knot diagram on $\Sigma$. 
\newline
\newline
\noindent Knots in $\Sigma \times I$ are considered equivalent up to (smooth or p.l) ambient isotopy of $\Sigma \times I$. On the other hand, knot diagrams on $\Sigma$ are considered equivalent up the Reidemeister moves, where each small ball defining a move depicts a neighborhood of the arcs which is homeomorphic to a small ball on $\Sigma$. Let $\mathscr{K}(\Sigma)$ denote either set of equivalence classes of knots in $\Sigma \times I$ or equivalence classes of knot diagrams on $\Sigma$. By the obvious generalization of the Reidemeister theorem to thickened surfaces, we see that $\mathscr{K}(\Sigma)$ is well-defined.

\begin{definition} Let $\Sigma\in \mathscr{S}$ and $(\Sigma,\tau)$ an oriented knot diagram on $\Sigma$. Let $D_{\tau}$ be a Gauss diagram of $(\Sigma,\tau)$. Let $K_{\tau}$ be a \emph{virtual knot} having $D_{\tau}$ as a Gauss diagram. Let $\varphi_{\Sigma}:\mathbb{Z}[\mathscr{K}(\Sigma)] \to \mathscr{K}$ be defined on generators by $\varphi_{\Sigma}(\Sigma,\tau)=K_{\tau}$. Note that $\varphi_{\Sigma}$ is well-defined \cite{kamkam}: If $\tau_1$ and $\tau_2$ are Reidemeister equivalent on $\Sigma$, then $\varphi_{\Sigma}(\Sigma,\tau_1)\leftrightarrow \varphi_{\Sigma}(\Sigma,\tau_2)$. 
\end{definition}

\begin{definition}[Virtual Knot Analogue] Let $\{A_{\Sigma}|\Sigma \in \mathscr{S} \}$ be a family of abelian groups. Let $\mathscr{F}=\{f[\Sigma]:\mathbb{Z}[\mathscr{K}(\Sigma)] \to A_{\Sigma}| \Sigma \in \mathscr{S} \}$ be a family of given knot invariants, where one invariant is specified for each $\Sigma \in \mathscr{S}$. Let $G$ be an abelian group. We say that $\nu:\mathbb{Z}[\mathscr{K}] \to G$ is a \emph{virtual knot analogue} of $\mathscr{F}$ if there is a set of group homomorphisms $\{\hat{\varphi}_{\Sigma}:A_{\Sigma} \to G|\Sigma \in \mathscr{S}\}$ such that for all $\Sigma \in \mathscr{S}$, the following diagram commutes: 
\[
\xymatrix{
\mathbb{Z}[\mathscr{K}(\Sigma)] \ar[r]^{f[\Sigma]} \ar[d]_{\varphi_{\Sigma}} & A_{\Sigma} \ar[d]^{\hat{\varphi}_{\Sigma}}\\
\mathbb{Z}[\mathscr{K}] \ar[r]_{\nu} & G \\
}
\]    
\end{definition}

\noindent We are interested in finding a virtual knot analogue for the Grishanov-Vassiliev invariants of order 2. We will do this by first introducing a single invariant of knots in a thickened surface $\Sigma$, $\Phi[\Sigma]:\mathbb{Z}[\mathscr{K}(\Sigma)] \to \mathscr{A}(\Sigma)$, which generalizes all of the Grishanov-Vassiliev invariants of order 2. The invariant $\Phi[\Sigma]$ is a generalization in the sense that if $\Phi$ is a Grishanov-Vassiliev invariant of order 2 on $\Sigma$, then $\Phi$ can be represented as $\Phi=\hat{\Phi} \circ \Phi[\Sigma]$, where $\hat{\Phi}$ is a linear functional $\hat{\Phi}:\mathscr{A}(\Sigma) \to \mathbb{Z}$. Next we will introduce an invariant of virtual knots $\phi:\mathbb{Z}[\mathscr{K}] \to \mathscr{A}$ which generalizes all of the virtual knot invariants $\phi_{i,j,k}$. Again, $\phi$ is a generalization of $\phi_{i,j,k}$ in the sense that there is a linear functional $\hat{\phi}_{i,j,k}:\mathscr{A} \to \mathbb{Z}$ such that $\phi_{i,j,k}=\hat{\phi}_{i,j,k}\circ\phi$.
\newline
\newline
After these invariants are constructed, it will be shown that $\phi:\mathbb{Z}[\mathscr{K}] \to \mathscr{A}$ is the virtual knot analogue of the family $\{\Phi[\Sigma]:\mathbb{Z}[\mathscr{K}(\Sigma)] \to \mathscr{A}(\Sigma)| \Sigma \in \mathscr{S}\}$.
 
\subsection{Definition of $\Phi[\Sigma]$} \label{sec_gvinvariants} Gauss diagram invariants and finite-type invariants of knots in thickened surfaces have been extensively studied (see for example \cite{Fd,arnaud,GrVa,vassily_curves}). In the present section, we present a generalization of the Grishanov-Vassiliev invariants of order 2. First we define a function which assigns to each knot diagram $K$ on $\Sigma$ a formal sum of Gauss diagrams. We then define a group $\mathscr{A}(\Sigma)$ which is generated by a set of Gauss diagrams. It is then proved that if $\Phi[\Sigma]$ is evaluated in $\mathscr{A}(\Sigma)$, then $\Phi[\Sigma]$ is an invariant of knots on $\Sigma \times I$. Lastly, it is shown that any Grishanov-Vassiliev invariant of order 2 can be represented as an integer valued linear functional on $\mathscr{A}(\Sigma)$.
\newline
\newline
Let $\Sigma \in \mathscr{S}$ and $K$ be a knot diagram on $\Sigma$. Let $D$ be a Gauss diagram of $K$. Let $D_{||}'$ be a subdiagram of $D$ consisting of two non-intersecting arrows. Consider the circle $S^1$ of $D$ to be the boundary of the disk $D^2$. Then the arrows of $D_{||}'$ divide the disk $D^2$ into three connected components called the \emph{regions} of $D_{||}'$.
\newline
\newline
We label the regions of $D_{||}'$ by giving the oriented smoothing at the corresponding crossings of $K$. This divides $K$ into three oriented closed curves $C_1,C_2,C_3$, each of which corresponds to a region $D_1,D_2,D_3$ of $D_{||}'$. Now, each $C_i$ represents some element $\alpha_i$ of $H_1(\Sigma;\mathbb{Z})$. The region $D_i$ of $D$ is labeled with the homology class $\alpha_i$. We define:
\[
\Phi[\Sigma](D)=\sum_{D_{||}'\subset D} sign(D_{||}') D_{||}',
\]
where the regions of each summand are labeled as described above, and $sign(D_{||}')$ is the product of the signs of the arrows in $D_{||}'$. Also in the formal sum, we will delete all sign information in the Gauss diagram (i.e. no arrows are labeled $\oplus$ or $\ominus$).
\newline
\newline
\noindent For $\Sigma \in \mathscr{S}$ we define an abelian group $\mathscr{A}(\Sigma)$. The set of generators of $\mathscr{A}(\Sigma)$ is the set $T(\Sigma)$ of unsigned Gauss diagrams given below. The set of relations of $\mathscr{A}(\Sigma)$ is the set $R(\Sigma)$ of unsigned Gauss diagrams given below. 

\begin{eqnarray*}
T(\Sigma) &=& \left\{\left.
\begin{array}{c}
\scalebox{.3}{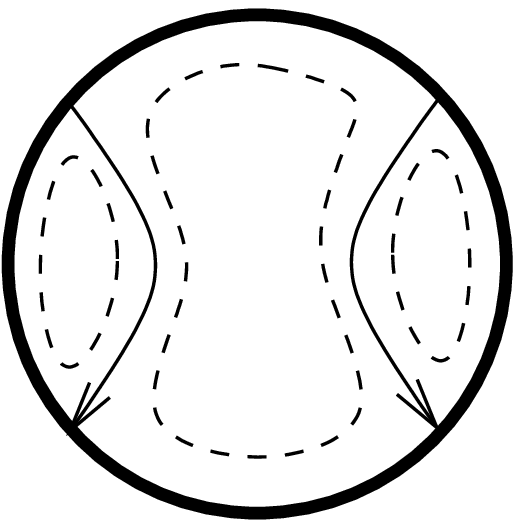}
\end{array}, \begin{array}{c}
\scalebox{.3}{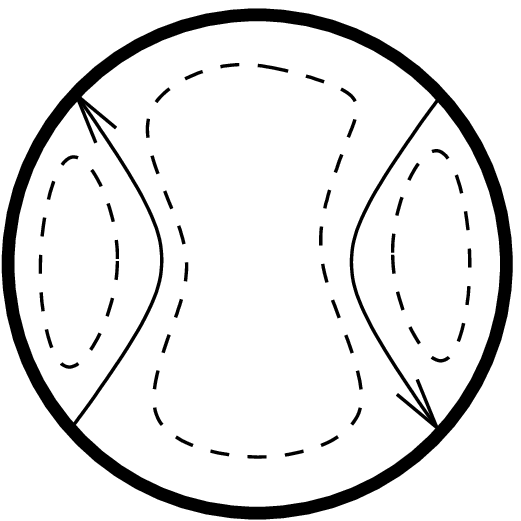}
\end{array},\begin{array}{c}
\scalebox{.3}{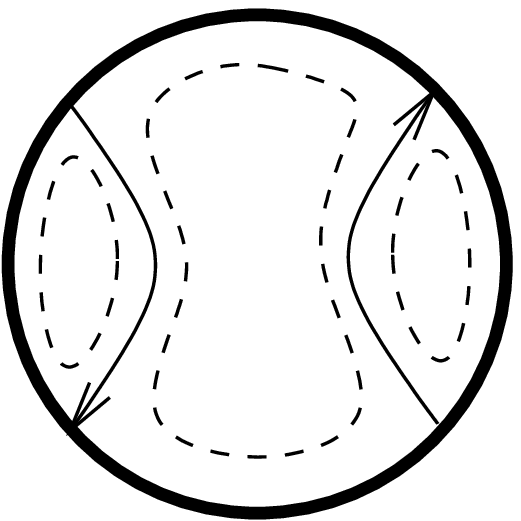}
\end{array} \right| \alpha,\beta,\gamma \in H_1(\Sigma ;\mathbb{Z}) \right\} \\
R(\Sigma) &=& \left\{\left.
\begin{array}{c}
\scalebox{.3}{\input{gv_invariant_1.pstex_t}}
\end{array}, \begin{array}{c}
\scalebox{.3}{\input{gv_invariant_2.pstex_t}}
\end{array},\begin{array}{c}
\scalebox{.3}{\input{gv_invariant_3.pstex_t}}
\end{array}\right| 0 \in \{\alpha,\beta,\gamma\} \right\} \\
          & \cup & \left\{\begin{array}{c}
\scalebox{.3}{\input{gv_invariant_1.pstex_t}}
\end{array}+\begin{array}{c}
\scalebox{.3}{\input{r1_2.pstex_t}}
\end{array}-\begin{array}{c}
\scalebox{.3}{\input{r1_3.pstex_t}}
\end{array} \right\} \\
          & \cup & \left\{ \begin{array}{c}
\scalebox{.3}{\input{gv_invariant_1.pstex_t}}
\end{array}+\begin{array}{c}
\scalebox{.3}{\input{r2_2.pstex_t}}
\end{array}-\begin{array}{c}
\scalebox{.3}{\input{r2_3.pstex_t}}
\end{array}\right\}
\end{eqnarray*}

\begin{lemma} \label{lemma_i_sigma} The map $\Phi[\Sigma]:\mathbb{Z}[\mathscr{K}(\Sigma)] \to \mathscr{A}(\Sigma)$ is an invariant of knots in $\Sigma \times I$.
\end{lemma}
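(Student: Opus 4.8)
The plan is to show that $\Phi[\Sigma]$ is unchanged under the three Reidemeister moves performed inside disks on $\Sigma$; since these are honest diagrams on the surface, there are no virtual crossings to treat separately. The argument runs exactly parallel to the proof of Theorem~\ref{lemm_phi_ijk}, with one conceptual shift: the two places where that proof used the numerical hypotheses on $i,j,k$ (that a region labeled $0$ forces the local count to vanish, and that $F_{i,j,k}$ annihilates the two $\Omega3$ relations of Figure~\ref{fig_phi_ijk_omega3}) are now absorbed into the defining relations $R(\Sigma)$ of the target group $\mathscr{A}(\Sigma)$. The one preliminary observation I would record is that if a region of a subdiagram $D_{||}'$ is bounded by arrows not involved in a given move, then the corresponding smoothed curve on $\Sigma$ is isotopic, away from the ball of the move, to its counterpart after the move; as homology classes are isotopy invariants, the region's label $\alpha_i \in H_1(\Sigma;\mathbb{Z})$ is preserved. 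This plays the role that Proposition~\ref{weightprop} played in the weighted setting.

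For Reidemeister~1, let the move introduce or delete an isolated arrow $e$. Subdiagrams $D_{||}'$ not containing $e$ are in sign- and label-preserving bijection with their counterparts by the preliminary observation. If $D_{||}'$ contains $e$, then smoothing $e$ splits off a small contractible circle, so one of the three smoothed curves is null-homologous and the corresponding region carries the label $0$; by the first family of relations in $R(\Sigma)$ (those with $0 \in \{\alpha,\beta,\gamma\}$) this generator is $0$ in $\mathscr{A}(\Sigma)$. For Reidemeister~2, the move involves two parallel arrows of opposite sign. Subdiagrams containing neither arrow biject as before. If $D_{||}'$ contains exactly one of the two arrows, replacing it by the other produces a subdiagram $D_{||}''$ with the same three smoothed curves, hence identical homology labels, but with $sign(D_{||}'')=-sign(D_{||}')$, so the two terms cancel in the formal sum $\sum sign(D_{||}')\,D_{||}'$. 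If $D_{||}'$ contains both arrows, smoothing both yields a contractible circle among the three curves, so one region is labeled $0$ and the generator vanishes in $\mathscr{A}(\Sigma)$ as in the R1 case.

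For Reidemeister~3, I would first invoke the reduction used in Theorem~\ref{lemm_phi_ijk} (via \cite{Ost} and the R1, R2 invariance just established): it suffices to treat the single move of Figure~\ref{omega3} together with its all-crossings-switched variant, and within that, only the subdiagrams $D_{||}'$ using exactly two of the three arrows of the move. Writing $\alpha,\beta,\gamma$ for the homology classes of the three curves obtained by smoothing such a pair, a direct smoothing analysis shows that the total contribution of these subdiagrams on the left-hand side minus that on the right-hand side is precisely the pair of formal sums of Figure~\ref{fig_phi_ijk_omega3}, with the integer labels there replaced by $\alpha,\beta,\gamma$. These are exactly the two non-trivial families of defining relations in $R(\Sigma)$, so the difference represents $0$ in $\mathscr{A}(\Sigma)$, which gives R3-invariance.

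The main obstacle is the bookkeeping in the R3 step: one must verify that the three smoothed closed curves appearing on the two sides of the move carry matching homology classes and are distributed among the three configuration types of $T(\Sigma)$ exactly in the pattern dictated by $R(\Sigma)$. I would settle this by drawing the oriented R3 move explicitly, performing each of the three pairwise oriented smoothings, and identifying the resulting triples of closed curves up to isotopy on $\Sigma$; because homology is an isotopy invariant, the labels computed on the left agree with those on the right after the reassignment of regions induced by the move, which is exactly what $R(\Sigma)$ was designed to encode. Everything else, namely the bijections and the sign cancellations, is routine once the preliminary isotopy observation is in place.
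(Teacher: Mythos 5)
Your proposal is correct and follows essentially the same route as the paper's proof: invariance under Reidemeister 1 and 2 via the zero-homology relations and the sign cancellation for subdiagrams containing exactly one of the move's arrows, followed by the \"{O}stlund reduction for Reidemeister 3 and the identification of the surviving contributions with the two non-trivial families of relations in $R(\Sigma)$. The only cosmetic difference is that you make explicit the preliminary isotopy-invariance observation for homology labels, which the paper leaves implicit.
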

\begin{proof} We will show that $\Phi[\Sigma]$ is invariant under all Reidemeister 1 and 2 moves and one of the two types of Reidemeister 3 moves.  By \"{O}stlund's theorem \cite{Ost}, this is sufficient to prove the lemma.
\newline
\newline
\underline{Reidemeister 1 Moves:} Let $D_{||}'$ be a subdiagram on the left hand side of an Reidemeister $1$ move having two arrows. If neither of the arrows corresponds to the crossing in the move, then $D_{||}'$ also appears a subdiagram the right hand side of the move with identical labels. If $D_{||}'$ contains the arrow which is involved in the move, then $D_{||}'$ must have a homology class which is $0$. Hence $D_{||}' \in R(\Sigma)$. It follows that $\Phi[\Sigma]$ is invariant under the Reidemeister $1$ move.
\newline
\newline
\underline{Reidemeister 2 Moves:} The left hand side of a Reidemeister $2$ move has two arrows, one of which is signed $\oplus$ and one of which is signed $\ominus$. Let $D_{||}'$ be a subdiagram containing two non-intersecting arrows. If neither arrow of $D_{||}'$ is involved in the move, then $D_{||}'$ appears as a subdiagram on the right hand side of the move. If one of the arrows is involved in the move, then $D_{||}'$ also appears as subdiagram with opposite sign. Hence $D_{||}'-D_{||}'=0$ and the total  contribution is $0$ to the left hand side of the Reidemeister 2 move. There are two cases if both arrows of $D_{||}$ are involved in the move: the two arrows do not intersect and two arrows intersect.  If the two arrows intersect, there is no contribution on the left hand side of the move. If the two arrows do not intersect, then one of the homology classes must be $0$. Hence $D_{||}' \in R(\Sigma)$ in this case. It follows that $\Phi[\Sigma]$ is invariant under the Reidemeister $2$ move.
\newline
\newline
\underline{Reidemeister 3 Moves:} Since all Reidemeister 1 and 2 moves have been considered, it is sufficient to consider only the move in Figure \ref{omega3} and the move obtained from this move by switching each of the depicted crossings. Let $\Omega 3$ denote the move depicted in Figure \ref{omega3}, $\text{LHS}(\Omega 3)$ the figure depicted on the left hand side of Figure \ref{omega3}, and $\text{RHS}(\Omega 3)$ the figure depicted on the right hand side of Figure \ref{omega3}.
\newline
\newline
\noindent Let $D_{||}'$ be a subdiagram on the left hand side of the move.  It is easy to see that if neither or only one of the arrows of $D_{||}'$ are involved in the move, then $D_{||}'$ is also an subdiagram of the right hand side of the move.  It remains only to consider the case in which both arrows of $D_{||}'$ are involved in the move.  
\newline
\newline
In Figure \ref{fig_omega_3_hom}, the three figures depict the three ways in which two crossings can be smoothed on the left and right hand sides of the Reidemeister $3$ move so that the corresponding crossings are non-intersecting. The labels $\delta_1$, $\delta_2$, $\delta_3$ represent homology classes in $H_1(\Sigma;\mathbb{Z})$. With these observations, we observe that $\Phi[\Sigma](\text{LHS}(\Omega 3))=\Phi[\Sigma](\text{RHS}(\Omega 3))$ reduces to:
\[
\begin{array}{c}
\scalebox{.4}{\input{i_is_good_plus_1.pstex_t}}
\end{array}+\begin{array}{c}
\scalebox{.4}{\input{i_is_good_plus_2.pstex_t}}
\end{array}=\begin{array}{c}
\scalebox{.4}{\input{i_is_good_plus_3.pstex_t}}
\end{array}
\]
When the depicted crossings in the Reidemeister $3$ move are reversed, $\Phi[\Sigma](\text{LHS}(\Omega 3))=\Phi[\Sigma](\text{RHS}(\Omega 3))$ reduces to the following equation:
\[
\begin{array}{c}
\scalebox{.4}
{\input{i_is_good_plus_1.pstex_t}}
\end{array}+\begin{array}{c}
\scalebox{.4}
{\input{i_is_good_minus_2.pstex_t}}
\end{array}=\begin{array}{c}
\scalebox{.4}
{\input{i_is_good_minus_3.pstex_t}}
\end{array}
\]
As both of these relations are in $R(\Sigma)$, it follows that $\Phi[\Sigma]$ is an invariant under the third Reidemeister move.  This completes the proof of the lemma.
\end{proof}

\begin{figure}[htb]
\[
\begin{array}{ccc}
\begin{array}{c}
\scalebox{.3}{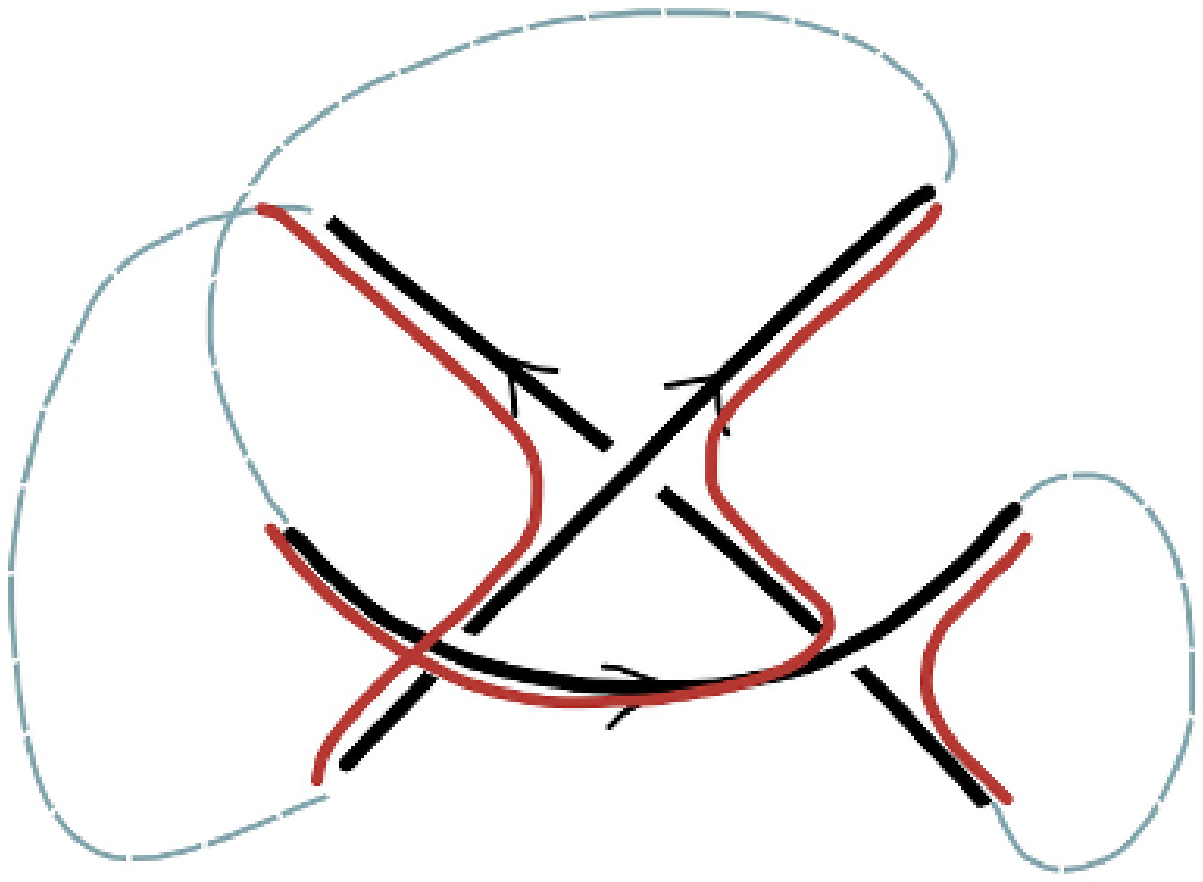}
\end{array}
&
\begin{array}{c}
\scalebox{.3}{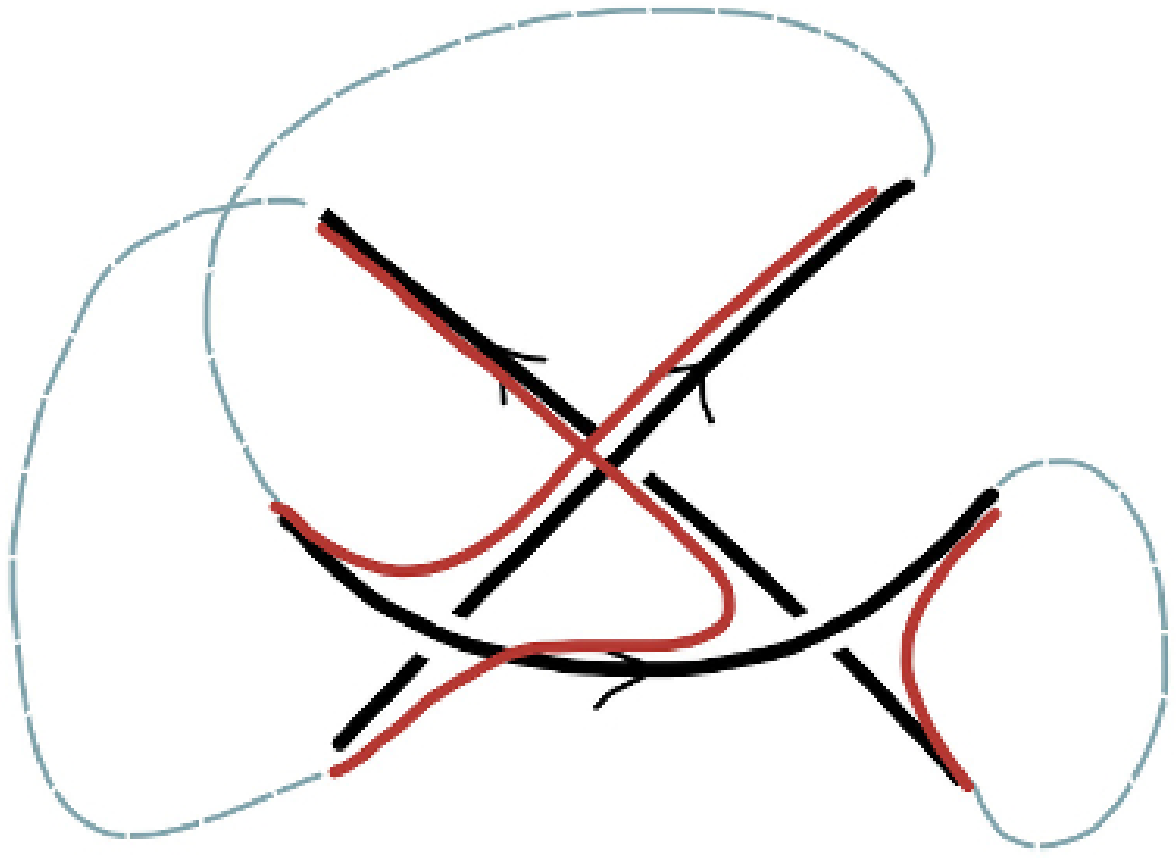}
\end{array}
&
\begin{array}{c}
\scalebox{.3}{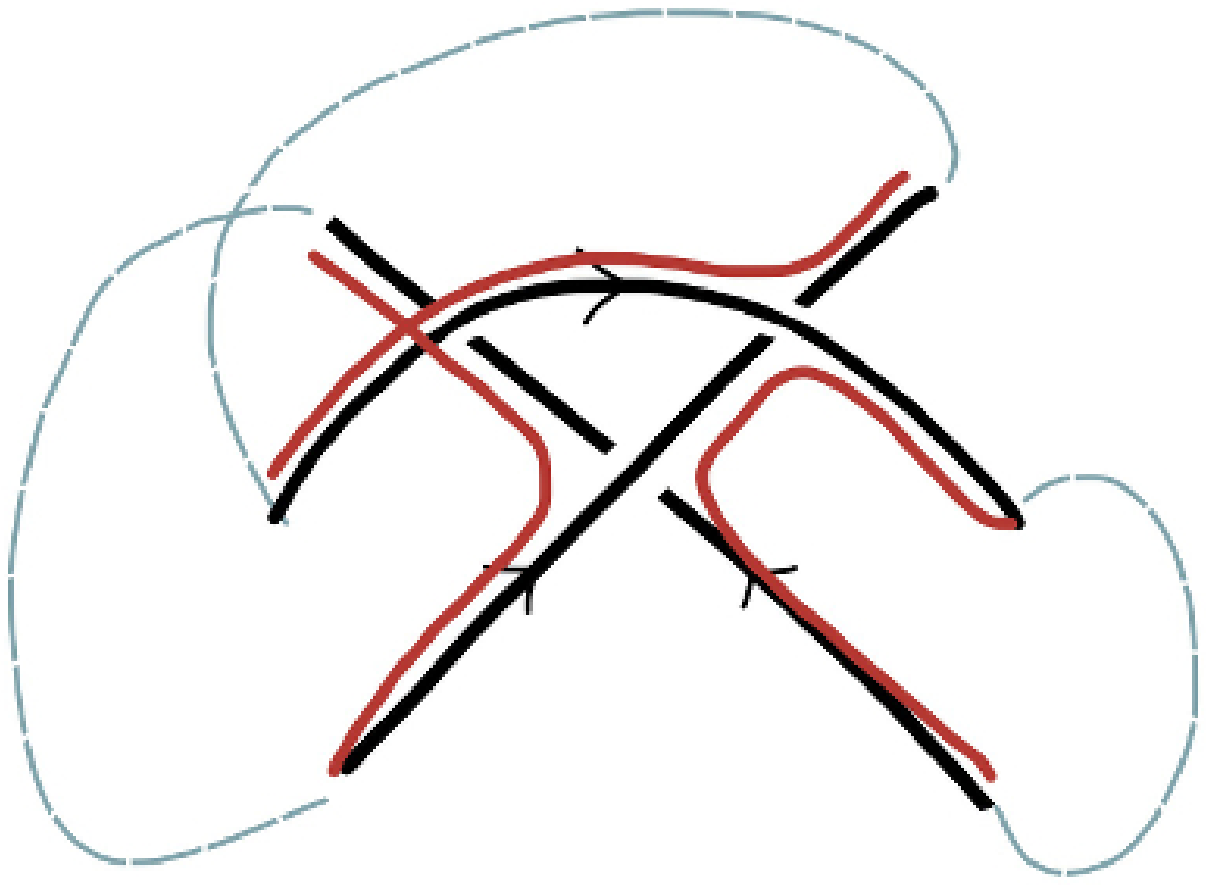}
\end{array}
\end{array}
\]
\caption{Possible ways to smooth along two parallel arrows in the diagrams of an Reidemeister $3$ move.} \label{fig_omega_3_hom}
\end{figure}

\noindent We now recall the definition of the Grishanov-Vassiliev invariants of order 2. For simplicity, we will consider only the case in which the regions are labeled with elements of $H_1(\Sigma;\mathbb{Z})$. Let $\alpha,\beta,\gamma \in H_1 (\Sigma;\mathbb{Z})$ such that $0 \notin \{\alpha,\beta,\gamma\}$ and either (1) $\alpha$, $\beta$, and $\gamma$ are all distinct or (2) $\alpha=\gamma$ and $\beta \ne \gamma$. Then we may define the Grishanov-Vassiliev invariant of order 2, $\Phi_{\alpha,\beta,\gamma}:\mathbb{Z}[\mathscr{K}(\Sigma)] \to \mathbb{Z}$, as follows:
\[
\Phi_{\alpha,\beta,\gamma}(K)=\left<\begin{array}{c}
\scalebox{.4}{\input{gv_invariant_1.pstex_t}}
\end{array}+\begin{array}{c}
\scalebox{.4}{\input{gv_invariant_2.pstex_t}}
\end{array}+\begin{array}{c}
\scalebox{.4}{\input{gv_invariant_3.pstex_t}}
\end{array},\Phi[\Sigma](K)\right>
\] 
It was proved in \cite{GrVa} that under the conditions (1) and (2) on $\alpha,\beta,\gamma$, that $\Phi_{\alpha,\beta,\gamma}$ is an invariant of knots in $\Sigma \times I$. We remark that the definition of $\Phi_{\alpha,\beta,\gamma}$ in \cite{GrVa} is quite different. Our notation and the prior construction of $\Phi[\Sigma]$ have made the definition more compact. 
\newline
\newline
Define $F_{\alpha,\beta,\gamma}$ to be the formal sum of labeled diagrams in the definition of $\Phi_{\alpha,\beta,\gamma}$. Hence, we have that $\Phi_{\alpha,\beta,\gamma}(\cdot)=\left<F_{\alpha,\beta,\gamma},\Phi[\Sigma](\cdot)\right>$. Define $\hat{\Phi}_{\alpha,\beta,\gamma}(\cdot)=\left<F_{\alpha,\beta,\gamma},\cdot\right>$.
\newline
\newline
The following lemma shows that $\Phi[\Sigma]$ is a generalization of all the Grishanov-Vassiliev invariants of order 2.

\begin{lemma} \label{lemma_phi_sigma_func} The Grishanov-Vassiliev invariant of order 2 can be represented as an integer valued linear functional on $\mathscr{A}(\Sigma)$. In particular, we have that $\hat{\Phi}_{\alpha,\beta,\gamma} \in \text{Hom}_{\mathbb{Z}}(\mathscr{A}(\Sigma),\mathbb{Z})$ and $\Phi_{\alpha,\beta,\gamma}=\hat{\Phi}_{\alpha,\beta,\gamma} \circ \Phi[\Sigma]$. 
\end{lemma}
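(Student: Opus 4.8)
The plan is to prove the two assertions in turn, the second being essentially formal once the first is established. For the factorization $\Phi_{\alpha,\beta,\gamma}=\hat{\Phi}_{\alpha,\beta,\gamma}\circ\Phi[\Sigma]$, recall that by definition $\hat{\Phi}_{\alpha,\beta,\gamma}(\cdot)=\left<F_{\alpha,\beta,\gamma},\cdot\right>$ and $\Phi_{\alpha,\beta,\gamma}(K)=\left<F_{\alpha,\beta,\gamma},\Phi[\Sigma](K)\right>$. Hence for every knot diagram $K$ on $\Sigma$ one has $\hat{\Phi}_{\alpha,\beta,\gamma}(\Phi[\Sigma](K))=\left<F_{\alpha,\beta,\gamma},\Phi[\Sigma](K)\right>=\Phi_{\alpha,\beta,\gamma}(K)$, so the identity holds on generators and, by linearity, on all of $\mathbb{Z}[\mathscr{K}(\Sigma)]$. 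The entire content of the lemma is therefore the claim that $\hat{\Phi}_{\alpha,\beta,\gamma}$ is a well-defined element of $\text{Hom}_{\mathbb{Z}}(\mathscr{A}(\Sigma),\mathbb{Z})$.

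To establish this I would exploit the presentation of $\mathscr{A}(\Sigma)$ by the generators $T(\Sigma)$ and relations $R(\Sigma)$. First define $\hat{\Phi}_{\alpha,\beta,\gamma}$ on the free abelian group $\mathbb{Z}[T(\Sigma)]$ by sending a labeled generator $D$ to the integer $\left<F_{\alpha,\beta,\gamma},D\right>$, namely the number of summands of $F_{\alpha,\beta,\gamma}$ equivalent to $D$ as a labeled Gauss diagram, and extending $\mathbb{Z}$-linearly. This is automatically a homomorphism out of the free group, so the only point to verify is that it annihilates every relation in $R(\Sigma)$; once this is checked, it descends to $\mathscr{A}(\Sigma)$ and is $\mathbb{Z}$-linear by construction.

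The relations in $R(\Sigma)$ fall into two groups. The first consists of the labeled diagrams with $0\in\{\alpha,\beta,\gamma\}$, which are declared trivial in $\mathscr{A}(\Sigma)$; since the hypothesis requires $0\notin\{\alpha,\beta,\gamma\}$, every summand of $F_{\alpha,\beta,\gamma}$ carries only nonzero region labels, so none can be equivalent to a diagram possessing a zero label, and $\hat{\Phi}_{\alpha,\beta,\gamma}$ kills each such relation. The second group consists of the two Reidemeister 3 type relations displayed in the definition of $R(\Sigma)$, which relate a diagram of one combinatorial type (with a permutation of the labels) to diagrams of the other two types. For these I would carry out a direct case analysis exactly parallel to the Reidemeister 3 verification in Lemma~\ref{lemma_i_sigma} (and, combinatorially, to Theorem~\ref{lemm_phi_ijk}): evaluate $\left<F_{\alpha,\beta,\gamma},\cdot\right>$ on each of the three terms of a relation and confirm that the signed total is zero. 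The distinctness and degeneracy conditions (1) and (2) on $\alpha,\beta,\gamma$ are precisely what force the cancellation, playing here the role that $i\ne j\ne k\ne i$ played in Theorem~\ref{lemm_phi_ijk}.

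The main obstacle is the bookkeeping in this last step, and in particular the degenerate case (2), where $\alpha=\gamma\ne\beta$. When the three labels are distinct, at most one summand of $F_{\alpha,\beta,\gamma}$ matches each term of a relation and the cancellation is transparent; but when $\alpha=\gamma$ two summands may simultaneously match a single term, so one must track these multiplicities carefully to confirm that the signed total still vanishes. Since the second group of relations is exactly the set of relations that render $\Phi[\Sigma]$ invariant under Reidemeister 3, this verification can also be cross-checked against the known invariance of $\Phi_{\alpha,\beta,\gamma}$ proved in \cite{GrVa}, which I would use as an independent confirmation of the case analysis. Once vanishing on all of $R(\Sigma)$ is confirmed, $\hat{\Phi}_{\alpha,\beta,\gamma}$ descends to a $\mathbb{Z}$-linear map $\mathscr{A}(\Sigma)\to\mathbb{Z}$, which together with the factorization above completes the proof.
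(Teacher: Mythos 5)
Your proposal is correct and follows essentially the same route as the paper: the factorization is immediate from the definitions, and the real content is checking that $\hat{\Phi}_{\alpha,\beta,\gamma}$ annihilates the zero-label relations (trivially, since $0\notin\{\alpha,\beta,\gamma\}$) and the two three-term relations in $R(\Sigma)$. The case analysis you defer is dispatched in the paper by a one-line counting argument --- since $\alpha\ne\beta$, at most one of the first two diagrams in a relation $D_1+D_2-D_3$ is counted, and $D_3$ is counted exactly when one of them is, so the signed total is $0$ or $1-1=0$ --- which also resolves the $\alpha=\gamma$ multiplicity worry you raise.
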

\begin{proof} The second claim follows immediately from our definition of $\Phi_{\alpha,\beta,\gamma}$. To establish the first claim, it will be shown that $\hat{\Phi}_{\alpha,\beta,\gamma}(R)=0$ for all $R \in R(\Sigma)$. If $D$ is one of the single diagrams in $R(\Sigma)$ having a region labeled with $0$, then the definition of $\hat{\Phi}_{\alpha,\beta,\gamma}$ implies that $\hat{\Phi}_{\alpha,\beta,\gamma}(D)=0$.
\newline
\newline
Suppose $R=D_1+D_2-D_3 \in R(\Sigma)$ and suppose that the homology classes labeling $D_1$ are, in order from left to right, $\delta_1,\delta_2,\delta_3$. It is clear that if $\{\alpha,\beta,\gamma\} \ne \{\delta_1,\delta_2,\delta_3\}$, then $\hat{\Phi}_{\alpha,\beta,\gamma}(R)=0$. Now suppose that $\{\alpha,\beta,\gamma\}=\{\delta_1,\delta_2,\delta_3\}$. Since, $\alpha \ne \beta$, it follows that either zero or one of $D_1$ and $D_2$ is counted by $\hat{\Phi}_{\alpha,\beta,\gamma}$. If none are counted, then $D_3$ is not counted.  If one is counted, then $D_3$ is also counted and the contribution to the value of $\hat{\Phi}_{\alpha,\beta,\gamma}$ is $1-1=0$. Hence, $\hat{\Phi}_{\alpha,\beta,\gamma}(R)=0$. 
\end{proof}

\subsection{Definition of $\phi$} The previous subsection introduced an invariant $\Phi[\Sigma]$ of knots in a given thickened surface $\Sigma \times I$ which is a generalization of all Grishanov-Vassiliev invariants of order 2. In this section, we introduce an abelian group $\mathscr{A}$ and an invariant $\phi:\mathbb{Z}[\mathscr{K}] \to \mathscr{A}$ which is a generalization of all the invariants $\phi_{i,j,k}$.  We define $\mathscr{A}=\frac{\mathbb{Z}[T]}{\left<R\right>}$, where $T$ and $R$ are the sets given below:

\begin{eqnarray*}
T &=& \left\{ \left. \begin{array}{c}
\scalebox{.4}{\input{phi_ijk_1.pstex_t}}
\end{array},\begin{array}{c}
\scalebox{.4}{\input{phi_ijk_2.pstex_t}}
\end{array},\begin{array}{c}
\scalebox{.4}{\input{phi_ijk_3.pstex_t}}
\end{array} \right| i,j,k \in \mathbb{N} \cup \{0\} \right\} \\
\end{eqnarray*}
\begin{eqnarray*}
R &=& \left\{\left.\begin{array}{c}
\scalebox{.4}{\input{phi_ijk_1.pstex_t}}
\end{array},\begin{array}{c}
\scalebox{.4}{\input{phi_ijk_2.pstex_t}}
\end{array},\begin{array}{c}
\scalebox{.4}{\input{phi_ijk_3.pstex_t}}
\end{array} \right| |\{i,j,k\}| <3 \right\} \\
& \cup & \left\{ \begin{array}{c}
\scalebox{.4}{\input{phi_omega_rels_1_1.pstex_t}}
\end{array}+\begin{array}{c}
\scalebox{.4}{\input{phi_omega_rels_1_2.pstex_t}}
\end{array} - \begin{array}{c}
\scalebox{.4}{\input{phi_omega_rels_1_3.pstex_t}} \end{array} \right\} \\
  & \cup & \left\{ \begin{array}{c}
\scalebox{.4}{\input{phi_omega_rels_1_1.pstex_t}}
\end{array}+\begin{array}{c}
\scalebox{.4}{\input{phi_omega_rels_2_2.pstex_t}}
\end{array}-\begin{array}{c}
\scalebox{.4}{\input{phi_omega_rels_2_3.pstex_t}} \end{array}\right\} 
\end{eqnarray*}

\noindent We now define the virtual knot invariant $\phi:\mathbb{Z}[\mathscr{K}] \to \mathscr{A}$. Let $D$ be a Gauss diagram of a virtual knot diagram $K$. We denote by $\phi(K)$ the formal sum of subdiagrams of $D$ having two non-intersecting arrows whose regions are given by relative weights:
\[
\phi(K)=\sum_{D_{||}'\subset D} sign(D_{||}') D_{||}',
\]
where $sign(D_{||}')$ is the product of the signs of the arrows of $D_{||}'$. 
\newline
\newline
Let $F_{i,j,k}$ be the formal sum of Gauss diagrams depicted in Figure \ref{fig_phi_ijk_defn}. Define $\hat{\phi}_{i,j,k}:\mathscr{A} \to \mathbb{Z}$ by $\hat{\phi}_{i,j,k}(\cdot)=\left<F_{i,j,k},\cdot\right>$

\begin{lemma} The map $\phi:\mathbb{Z}[\mathscr{K}] \to \mathscr{A}$ is an invariant of virtual knots.
\end{lemma}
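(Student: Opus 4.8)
The plan is to follow the argument of Theorem~\ref{lemm_phi_ijk} almost verbatim, the essential difference being that where that proof showed the functional $F_{i,j,k}$ annihilates the configurations arising in each Reidemeister move, we now observe that the very same configurations are, by construction, elements of the relation set $R$, and hence vanish in $\mathscr{A}=\mathbb{Z}[T]/\langle R\rangle$. Concretely, I would fix a Gauss diagram $D$ representing the left-hand side of one of the moves of Figure~\ref{fig:rmoves} and analyze the subdiagrams $D_{||}'$ with two non-intersecting arrows according to how many of their arrows participate in the move, exactly as before. By Proposition~\ref{weightprop} the relative weights $w_{ij}$ labeling the regions of a subdiagram behave under the moves in the way needed: weights of uninvolved arrows are unchanged, and corresponding arrows on the two sides of a Reidemeister~3 move carry equal weights.

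For the Reidemeister~1 move, a subdiagram not containing the move's arrow survives unchanged to the other side. If $D_{||}'$ does contain the isolated arrow of the move, then smoothing separates off a simple closed curve meeting no other component, so two of the three relative weights are $0$; the corresponding generator has $|\{i,j,k\}|<3$ and therefore lies in $R$, contributing $0$ in $\mathscr{A}$. The Reidemeister~2 move splits into three subcases: when neither arrow of $D_{||}'$ is involved the subdiagram persists; when exactly one is involved, the two arrows of the move have equal weight and opposite sign, producing a companion subdiagram $D_{||}''$ with identical region labels but opposite value of $sign(D_{||}')$, so that $sign(D_{||}')D_{||}'+sign(D_{||}'')D_{||}''=0$ already in $\mathbb{Z}[T]$; and when both arrows are involved, smoothing again yields an isolated simple closed curve, forcing two relative weights to vanish and placing the generator in $R$.

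For the Reidemeister~3 move I would invoke \"Ostlund's theorem \cite{Ost} to reduce to the move of Figure~\ref{omega3} together with its all-crossings-switched variant. When $D_{||}'$ meets the move in zero or one arrow, the same bijection argument used in Theorem~\ref{lemm_phi_ijk} matches each left-hand subdiagram with a right-hand subdiagram of equal label and sign, so these contributions cancel in the difference $\phi(\mathrm{LHS})-\phi(\mathrm{RHS})$. The remaining case, where both arrows of $D_{||}'$ participate, is the crux: here the net contribution to $\phi(\mathrm{LHS})-\phi(\mathrm{RHS})$ is precisely the signed sum of labeled Gauss diagrams displayed in Figure~\ref{fig_phi_ijk_omega3}, which are exactly the two triangle-type relations generating the second and third families of $R$. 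Hence this contribution is zero in $\mathscr{A}$. Finally, the non-classical extended Reidemeister moves preserve all weights and induce a one-to-one correspondence between the two-arrow subdiagrams on the two sides, so $\phi$ is manifestly unchanged, and I would record this briefly.

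The main obstacle is the both-arrows case of the Reidemeister~3 verification: one must check carefully that, after accounting for which regions the three smoothings produce and how the signs $sign(D_{||}')$ are distributed, the resulting formal sum is literally one of the defining relators of $R$ (up to the symmetry $w_{ij}=w_{ji}$ and the independence of the choice of separating arrow noted when $\phi_{i,j,k}$ was defined), rather than merely being annihilated by every $F_{i,j,k}$. Since the relations in $R$ were designed to be exactly this signed sum, the identification should go through, but it is the step that demands the most care in tracking the region labels across the move.
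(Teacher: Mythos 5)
Your proposal is correct and takes essentially the same route as the paper, which simply defers to the definitions and the proof of Theorem \ref{lemm_phi_ijk}; you have filled in the details the paper omits, correctly observing that the configurations annihilated by $F_{i,j,k}$ in that proof are precisely the relators in $R$, so the corresponding contributions vanish in $\mathscr{A}=\mathbb{Z}[T]/\langle R\rangle$. Your identification of the degenerate cases with the $|\{i,j,k\}|<3$ relators and of the Reidemeister~3 both-arrows case with the two triangle relations is exactly what the paper intends.
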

\begin{proof} This follows from the definitions and the proof that $\phi_{i,j,k}$ is an invariant of virtual knots.  As it is similar to arguments already presented, we omit the details.
\end{proof}

\noindent The following lemma shows that $\phi$ is indeed a generalization of all the invariants $\phi_{i,j,k}$ of virtual knots.

\begin{lemma} Let $i,j,k \in \mathbb{N} \cup \{0\}$ such that $i \ne j \ne k \ne i$. Then $\phi_{i,j,k}$ can be represented as an integer valued linear functional on $\mathscr{A}$. In particular, we have that $\hat{\phi}_{i,j,k} \in \text{Hom}_{\mathbb{Z}}(\mathscr{A},\mathbb{Z})$ and $\phi_{i,j,k}=\hat{\phi}_{i,j,k} \circ \phi$.
\end{lemma}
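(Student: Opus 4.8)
The plan is to follow the template of Lemma~\ref{lemma_phi_sigma_func}, establishing the two assertions in the reverse of the order stated. The identity $\phi_{i,j,k}=\hat{\phi}_{i,j,k}\circ\phi$ is the more routine of the two, and I would dispatch it first by unwinding the definitions. Applying $\hat{\phi}_{i,j,k}=\left<F_{i,j,k},\cdot\right>$ to $\phi(K)=\sum_{D_{||}'\subset D}sign(D_{||}')\,D_{||}'$ and using bilinearity of the pairing yields $\hat{\phi}_{i,j,k}(\phi(K))=\sum_{D_{||}'\subset D}sign(D_{||}')\left<F_{i,j,k},D_{||}'\right>$, which is exactly the defining formula for $\phi_{i,j,k}(K)$ from Section~\ref{sec_phi_ijk}. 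The one point to remark is that the sign which Section~\ref{sec_phi_ijk} absorbs into the pairing now appears instead as the coefficient $sign(D_{||}')$ carried by $\phi(K)$, so the two bookkeeping conventions agree.

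The substance of the lemma is the first assertion, that $\hat{\phi}_{i,j,k}$ is well-defined on the quotient $\mathscr{A}=\mathbb{Z}[T]/\left<R\right>$. Since $\hat{\phi}_{i,j,k}$ is visibly a homomorphism on $\mathbb{Z}[T]$, it descends to $\mathscr{A}$ exactly when $\hat{\phi}_{i,j,k}(R)=0$ for every $R$ in the generating set of relations, and I would verify this family by family. For a single generator with $|\{i,j,k\}|<3$, two of its region labels coincide; since every diagram appearing in $F_{i,j,k}$ carries the three \emph{distinct} labels $i\ne j\ne k\ne i$, such a degenerate generator matches no term of $F_{i,j,k}$, so the pairing vanishes.

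For the remaining two families, the $\Omega 3$-type relations of the form $D_1+D_2-D_3$, I would observe that these are precisely the formal sums displayed in Figure~\ref{fig_phi_ijk_omega3}. Establishing $\left<F_{i,j,k},D_1\right>+\left<F_{i,j,k},D_2\right>-\left<F_{i,j,k},D_3\right>=0$ is therefore identical to the Reidemeister~3 computation already carried out in the proof of Theorem~\ref{lemm_phi_ijk}, where it was checked over all possibilities using $i\ne j\ne k\ne i$; invoking that computation gives $\hat{\phi}_{i,j,k}(R)=0$ and completes the verification that $\hat{\phi}_{i,j,k}\in\mathrm{Hom}_{\mathbb{Z}}(\mathscr{A},\mathbb{Z})$. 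I expect no serious obstacle: the lemma is essentially a repackaging of Theorem~\ref{lemm_phi_ijk} in the functorial language of $\mathscr{A}$, and the only computational step has already been done. The single point demanding care is reconciling the two conventions for the bracket $\left<\,,\,\right>$ between Sections~\ref{sec_phi_ijk} and~\ref{sec_gvinariant}—whether the sign is stored in the pairing or in the coefficients of $\phi(K)$—after which everything is bilinear bookkeeping.
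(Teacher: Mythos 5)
Your proposal is correct and follows essentially the same route as the paper, which simply states that the lemma ``follows immediately from the definitions and the previous lemma''; you have filled in precisely the details the authors leave implicit, namely the bilinearity computation giving $\phi_{i,j,k}=\hat{\phi}_{i,j,k}\circ\phi$ and the verification that $\hat{\phi}_{i,j,k}$ kills the relations in $R$ (the degenerate-label generators because $i\ne j\ne k\ne i$, and the $\Omega3$-type relations by the computation already performed in the proof of Theorem~\ref{lemm_phi_ijk}). This is exactly the template of Lemma~\ref{lemma_phi_sigma_func}, and your remark about where the sign is stored in the two bracket conventions is the right point of care.
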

\begin{proof} This follows immediately from the definitions and the previous lemma.
\end{proof}
 
\subsection{$\phi$ is a virtual knot analogue of the set of $\Phi[\Sigma]$} The previous two sections introduced the invariant $\Phi[\Sigma]$ of knots in a given thickened surface and the invariant $\phi$ of virtual knots.  It was proved that $\Phi[\Sigma]$ is a generalization of all the Grishanov-Vassiliev invariants of order 2 and that $\phi$ is a generalization of all the invariants $\phi_{i,j,k}$. In this section, we use intersection theory to prove that $\phi$ is a virtual knot analogue of the family $\{\Phi[\Sigma]
|\Sigma \in \mathscr{S}\}$. More precisely, we show that for all $\Sigma \in \mathscr{S}$, that there is a $\hat{\varphi}_{\Sigma}:\mathscr{A}(\Sigma) \to \mathscr{A}$ such that $\hat{\varphi}_{\Sigma} \circ \Phi[\Sigma]=\phi \circ \varphi_{\Sigma}$. We begin by recalling some fundamentals of intersection theory.
\newline
\newline
\noindent Let $M$ and $N$ be sub-manifolds of a closed $n$-manifold $V$ such that $\dim(V)=\dim(M)+\dim(N)$. Suppose that $M \cap N$ is a finite set of points and that all intersections are transverse. Recall that the intersection number of $M$ and $N$ can be computed from homology classes $a$ and $b$ represented by $M$ and $N$, respectively. If $\mu' \in H^2(\Sigma \times \Sigma;\mathbb{Z})$ (see \cite{gh} Corollary (30.3)) represents the restriction of the Thom class, then the intersection number is given by $[a \times b,\mu']$ (see \cite{gh} Proposition (31.7)).
\newline
\newline
\noindent We now return to the definition of the map $\hat{\varphi}_{\Sigma}:\mathscr{A}(\Sigma) \to \mathscr{A}$. Let $\Sigma$ be oriented and let $\mu'\in H^2(\Sigma \times \Sigma;\mathbb{Z})$ denote the restriction of the Thom class of $\Sigma$ (see Corollary (30.3) \cite{gh}). Suppose $D_{||} \in T(\Sigma)$. If $\alpha$ and $\beta$ are homology classes of $D_{||}$, let $\omega_{\alpha,\beta}=|[\alpha \times \beta,\mu']|$ \cite{gh}. Then $\hat{\varphi}_{\Sigma}$ is defined on the generators of $\mathscr{A}(\Sigma)$ as shown in Figure \ref{fig_defn_hat_varphi}.

\begin{figure}
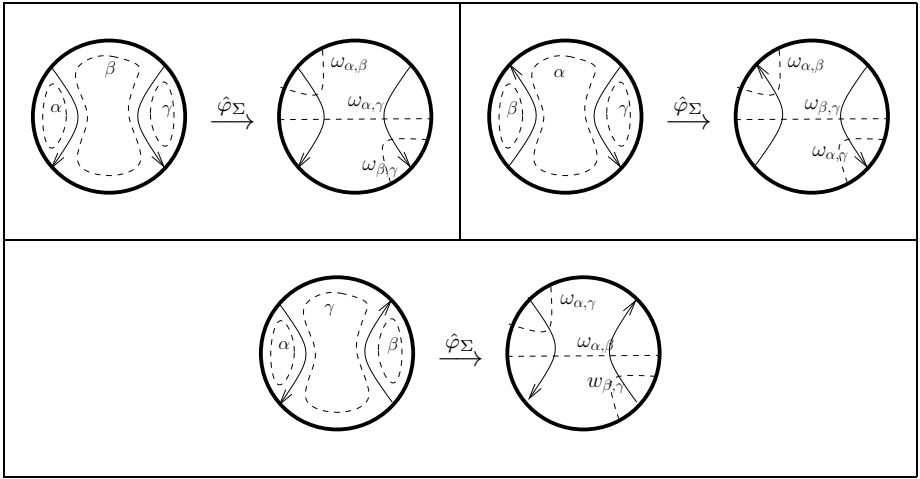

\[
\begin{array}{|c|c|} \hline
 &  \\
\begin{array}{c}
\scalebox{.4}{\input{gv_invariant_1.pstex_t}}
\end{array} \stackrel{\hat{\varphi}_{\Sigma}}{\longrightarrow} \begin{array}{c}
\scalebox{.4}{\input{defn_hat_phi_sigma_gens_1.pstex_t}}
\end{array} & 
\begin{array}{c}
\scalebox{.4}{\input{gv_invariant_2.pstex_t}}
\end{array} \stackrel{\hat{\varphi}_{\Sigma}}{\longrightarrow} \begin{array}{c}
\scalebox{.4}{\input{defn_hat_phi_sigma_gens_2.pstex_t}}
\end{array} \\ & \\ \hline \multicolumn{2}{|c|}{\hspace{1cm}}  \\
\multicolumn{2}{|c|}{
\begin{array}{c}
\scalebox{.4}{\input{gv_invariant_3.pstex_t}}
\end{array} \stackrel{\hat{\varphi}_{\Sigma}}{\longrightarrow} \begin{array}{c}
\scalebox{.4}{\input{defn_hat_phi_sigma_gens_3.pstex_t}}
\end{array}} \\ \multicolumn{2}{|c|}{\hspace{1cm}}  \\ \hline 
\end{array}
\]
\caption{Definition of $\hat{\varphi}_{\Sigma}$ on generations in $T(\Sigma)$.} \label{fig_defn_hat_varphi}
\end{figure}

\begin{lemma} $\hat{\varphi}_{\Sigma}$ descends to a map $\hat{\varphi}_{\Sigma}:\mathscr{A}(\Sigma) \to \mathscr{A}$.
\end{lemma}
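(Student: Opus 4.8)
The plan is to define $\hat{\varphi}_{\Sigma}$ first on the free abelian group $\mathbb{Z}[T(\Sigma)]$ by $\mathbb{Z}$-linear extension of the assignment in Figure \ref{fig_defn_hat_varphi}, and then to verify that this extension annihilates the subgroup $\langle R(\Sigma)\rangle$, i.e.\ that $\hat{\varphi}_{\Sigma}(r)=0$ in $\mathscr{A}$ for every defining relation $r \in R(\Sigma)$. Once this is established, $\hat{\varphi}_{\Sigma}$ factors through the quotient $\mathscr{A}(\Sigma)=\mathbb{Z}[T(\Sigma)]/\langle R(\Sigma)\rangle$, giving the desired map $\mathscr{A}(\Sigma)\to\mathscr{A}$, and the lemma follows.

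Before examining the relations I would record two elementary properties of the pairing $\omega_{\alpha,\beta}=|[\alpha\times\beta,\mu']|$. First, since the algebraic intersection form on $H_1(\Sigma;\mathbb{Z})$ is skew-symmetric, $[\alpha\times\beta,\mu']=-[\beta\times\alpha,\mu']$, so after taking absolute values $\omega_{\alpha,\beta}=\omega_{\beta,\alpha}$; this is the analogue of the symmetry $w_{i,j}=w_{j,i}$ of relative weights and is exactly what makes the assignment in Figure \ref{fig_defn_hat_varphi} consistent. Second, the zero homology class has zero intersection number with every class, so $\omega_{0,\beta}=0$ for all $\beta$. With these two facts in hand the verification splits along the two families of relations comprising $R(\Sigma)$.

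For a single-diagram relation with $0\in\{\alpha,\beta,\gamma\}$, say $\gamma=0$, the second property gives $\omega_{\alpha,\gamma}=\omega_{\beta,\gamma}=0$, so the image diagram $\hat{\varphi}_{\Sigma}(D_{||}')$ carries the weight $0$ in at least two of its three region-pairs. Its weight triple therefore satisfies $|\{i,j,k\}|<3$, which is precisely one of the single-diagram relations defining $\mathscr{A}$; hence $\hat{\varphi}_{\Sigma}(D_{||}')=0$ in $\mathscr{A}$. For the Reidemeister-$3$ relations $D_1+D_2-D_3\in R(\Sigma)$ I would apply $\hat{\varphi}_{\Sigma}$ termwise. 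The three diagrams share one unordered triple of homology labels arranged in the three combinatorial placements of two non-intersecting arrows (exactly as in the reduction at the end of the proof of Lemma \ref{lemma_i_sigma}), and $\hat{\varphi}_{\Sigma}$ sends the three diagram types of $T(\Sigma)$ to the corresponding three types of $T$, replacing each labeled region-pair $(\alpha,\beta)$ by $\omega_{\alpha,\beta}$. Using the symmetry $\omega_{\alpha,\beta}=\omega_{\beta,\alpha}$ to align the region-pairs, the image $\hat{\varphi}_{\Sigma}(D_1)+\hat{\varphi}_{\Sigma}(D_2)-\hat{\varphi}_{\Sigma}(D_3)$ is exactly one of the $F_{i,j,k}$-type relations defining $\mathscr{A}$, hence $0$ in $\mathscr{A}$; the same computation applied to the crossing-reversed relation lands on the other $F_{i,j,k}$-type relation in $R$.

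I expect the only genuine obstacle to be the bookkeeping in this last step: one must check that the permutation of homology labels appearing in each relation of $R(\Sigma)$ is carried by $\hat{\varphi}_{\Sigma}$ to precisely the corresponding permutation of weights in a relation of $R$, i.e.\ that diagram types $1,2,3$ in $T(\Sigma)$ map to types $1,2,3$ in $T$ with region-pairs aligned. This is routine once the symmetry of $\omega$ is available, since the map is defined type-by-type and merely substitutes $\omega_{\alpha,\beta}$ for the homology pair; no topological input beyond the two properties of $\omega$ recorded above is required.
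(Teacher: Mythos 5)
Your proposal is correct and follows essentially the same route as the paper: the paper also checks that generators of $T(\Sigma)$ map into $T$, observes that a null-homologous label forces two of the $\omega$'s to vanish so that $|\{\omega_{\alpha,\beta},\omega_{\beta,\gamma},\omega_{\alpha,\gamma}\}|<3$, and concludes $\hat{\varphi}_{\Sigma}(R(\Sigma))\subseteq R$. The only difference is that you spell out the termwise verification for the three-term relations and the symmetry $\omega_{\alpha,\beta}=\omega_{\beta,\alpha}$, which the paper leaves implicit.
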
 
\begin{proof} Note that $\hat{\varphi}_{\Sigma}(T(\Sigma)) \subseteq T$. If one of $\alpha,\beta,\gamma$ is null homologous, then at least two of $\omega_{\alpha,\beta}$, $\omega_{\beta,\gamma}$ and $\omega_{\alpha,\gamma}$ are zero. Thus, $|\{\omega_{\alpha,\beta}, \omega_{\beta,\gamma},\omega_{\alpha,\gamma}\}|<3$. It follows that $\hat{\varphi}_{\Sigma}(R(\Sigma)) \subseteq R$. This proves the lemma.
\end{proof}

\begin{theorem} \label{grva_factors} The virtual knot invariant $\phi:\mathbb{Z}[\mathscr{K}] \to \mathscr{A}$ is the virtual knot analogue of the family $\{\Phi[\Sigma]:\mathbb{Z}[\mathscr{K}(\Sigma)] \to \mathscr{A}(\Sigma)| \Sigma \in \mathscr{S}\}$. In particular, we have that the following diagram commutes for all $\Sigma \in \mathscr{S}$.
\[
\xymatrix{
\mathbb{Z}[\mathscr{K}(\Sigma)] \ar[r]^{\Phi[\Sigma]} \ar[d]_{\varphi_{\Sigma}} & \mathscr{A}(\Sigma) \ar[d]^{\hat{\varphi}_{\Sigma}}\\
\mathbb{Z}[\mathscr{K}] \ar[r]_{\phi} & \mathscr{A} \\
}
\]    
\end{theorem}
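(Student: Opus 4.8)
The plan is to verify commutativity on each generator $(\Sigma,\tau)$ of $\mathbb{Z}[\mathscr{K}(\Sigma)]$ by comparing the two compositions $\hat{\varphi}_{\Sigma}\circ\Phi[\Sigma]$ and $\phi\circ\varphi_{\Sigma}$ summand by summand. First I would observe that both maps are formal sums indexed by the \emph{same} set of subdiagrams. Indeed, $\varphi_{\Sigma}(\Sigma,\tau)$ is by definition a virtual knot whose Gauss diagram is $D_{\tau}$, so the subdiagrams $D_{||}'\subset D_{\tau}$ with two non-intersecting arrows, together with their signs $sign(D_{||}')$, are literally identical on the two sides. Since $\hat{\varphi}_{\Sigma}$ and $\phi$ both preserve $sign(D_{||}')$, the theorem reduces to a comparison of region labels: for each $D_{||}'$ and each pair of regions $(D_i,D_j)$, I must show that the homological label $\omega_{\alpha_i,\alpha_j}=|[\alpha_i\times\alpha_j,\mu']|$ coming from $\hat{\varphi}_{\Sigma}\circ\Phi[\Sigma]$ equals the relative weight $w_{ij}$ coming from $\phi\circ\varphi_{\Sigma}$.

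The heart of the argument is therefore the single identity $|[\alpha_i\times\alpha_j,\mu']|=w_{ij}$, where $\alpha_i,\alpha_j\in H_1(\Sigma;\mathbb{Z})$ are the classes of the oriented curves $C_i,C_j$ obtained by smoothing $(\Sigma,\tau)$ along the two arrows of $D_{||}'$. I would invoke the standard fact from intersection theory that $[\alpha_i\times\alpha_j,\mu']$ equals the signed count of transverse geometric intersections of any representing curves, and use $C_i,C_j$ themselves as representatives, since they are already transverse at the crossings of $\tau$. Because $\tau$ is drawn on $\Sigma$ there are no virtual crossings, so every point of $C_i\cap C_j$ is a classical crossing $c$ with one strand on $C_i$ and one on $C_j$; these are exactly the arrows $c\in C_{i,j}$ (one foot in $A_i$, one in $A_j$). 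The two smoothed crossings contribute nothing, and crossings with both strands on the same $C_i$ are self-intersections that do not affect $\alpha_i\cdot\alpha_j$.

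It then remains to match local signs. At a crossing $c\in C_{i,j}$ the geometric intersection sign $\epsilon_c$ is the sign of the frame of oriented tangents $(\dot C_i,\dot C_j)$, while the writhe sign $sign(c)$ is the sign of the frame (over-tangent, under-tangent). Comparing the two frames gives $\epsilon_c=sign(c)$ when the over-strand lies on $C_i$ and $\epsilon_c=-sign(c)$ when it lies on $C_j$; since the arrow $c$ points from over to under, this over/under bookkeeping is precisely what $int_{a_{ij}}(c)$ records relative to the separating arrow $a_{ij}$. I would check directly from the conventions of Figure \ref{fig:intx}, together with the fact that $c$ necessarily crosses the chord $a_{ij}$ whenever its feet lie in distinct regions $A_i,A_j$, that $\epsilon_c=\eta\, sign(c)\, int_{a_{ij}}(c)$ for a global sign $\eta\in\{\pm1\}$ independent of $c$ (arising only from the orientation chosen for $a_{ij}$ and the ordering of $\alpha_i,\alpha_j$). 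Summing over $c\in C_{i,j}$ yields $[\alpha_i\times\alpha_j,\mu']=\eta\sum_{c\in C_{i,j}}sign(c)\,int_{a_{ij}}(c)$, and taking absolute values gives $\omega_{\alpha_i,\alpha_j}=w_{ij}$, consistent with the already-noted independence of $w_{ij}$ from the choice of $a_{ij}$.

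The main obstacle I anticipate is exactly this local sign matching: one must pin down the geometric intersection sign of $C_i$ with $C_j$ at each crossing and confirm that it agrees, up to a single global sign, with the combinatorial convention defining $int_{a_{ij}}(c)$. The crucial simplification is that the global-sign ambiguity $\eta$ is annihilated by the absolute values present in both $w_{ij}$ and $\omega_{\alpha_i,\alpha_j}$, so the verification becomes a finite local case-check rather than a delicate global orientation computation. Once the label equality holds for every pair of regions in every subdiagram, the two formal sums coincide term by term, and the diagram commutes for all $\Sigma\in\mathscr{S}$.
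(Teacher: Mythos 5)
Your proposal is correct and follows essentially the same route as the paper: reduce commutativity to matching, for each two-arrow subdiagram, the homological label $\omega_{\alpha_i,\alpha_j}=|[\alpha_i\times\alpha_j,\mu']|$ against the relative weight $w_{ij}$, identify the mutual intersections of the state curves with the arrows in $C_{i,j}$, and check local signs up to a global ambiguity that the absolute values absorb. The only divergence is at the self-intersection issue, which you dispatch by asserting that self-crossings of a single state curve do not affect the pairing, whereas the paper (working from a submanifold formulation of intersection numbers) additionally smooths each self-intersecting component into disjoint embedded curves and invokes bilinearity of $[\cdot\times\cdot,\mu']$ to reach the same count.
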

\begin{proof} Let $(\Sigma,\tau)$ be a knot diagram on $\Sigma$ having Gauss diagram $D_{\tau}$. Let $D_{||}'$ denote a labeled subdiagram of $D_{\tau}$ having two non-intersecting arrows. Let $\delta$, $\epsilon$, $\zeta \in H_1(\Sigma;\mathbb{Z})$ be the homology classes corresponding to the labels. We must show that $\hat{\varphi}_{\Sigma}$ assigns the same relative weight to the region pairs of $D_{||}'$ as does $\phi$.
\newline
\newline
Suppose first that every arrow of $D_{\tau}$ has at most one endpoint in each of the regions of $D_{||}'$. Then $\delta$, $\epsilon$, $\zeta$ are represented by oriented one-dimensional submanifolds $M_{\delta}$, $M_{\epsilon}$, $M_{\zeta}$. In other words, $[M_{\delta}]=\delta$, $[M_{\epsilon}]=\epsilon$, and $[M_{\zeta}]=\zeta$. Since $\tau$ is a knot diagram on $\Sigma$, we must have that any two of $M_{\delta}$, $M_{\epsilon}$, $M_{\zeta}$ intersect transversally in a finite number of points.
\newline
\newline
By definition, we have that $\omega_{\delta,\epsilon}=|[\delta \times \epsilon,\mu']|$. Since, $M_{\delta}$ and $M_{\epsilon}$ are one-dimensional submanifolds of $\Sigma$, we have that:
\[
|M_{\delta} \cdot M_{\epsilon}|=|[\delta \times \epsilon,\mu']|,
\]
where $M_{\delta}\cdot M_{\epsilon}$ is the intersection number of $M_{\delta}$ and $M_{\epsilon}$ (see Proposition (31.7) \cite{gh}). 
\newline
\newline
Alternatively, the intersection number can be computed by adding up the local intersection numbers of the transversal intersections. Without loss of generality, we may assume that $\Sigma$ is oriented in such a way that the local intersection numbers are given as in Figure \ref{fig_loc_int_numbers}. The blue arc represents the arc of $M_{\delta}$ while the red arc represents the arc of $M_{\epsilon}$. Since $M_{\delta}$ and $M_{\epsilon}$ represent partial state curves of the knot diagram on $\Sigma$, each transversal intersection between $M_{\delta}$ and $M_{\epsilon}$ represents a classical crossing of $K$. Note that if the crossing is changed from $\oplus \to \ominus$ or $\ominus \to \oplus$, then the contribution of this intersection to the intersection number is the same.
\newline
\newline
Let $A_{\delta}$ be the region in the Gauss diagram labeled by $\delta$ and $A_{\epsilon}$ the region in the Gauss diagram labeled by $\epsilon$. Consider the computation of the weight $w_{\delta,\epsilon}$ of the pair of regions $A_{\delta}$ and $A_{\epsilon}$. Let $a_{\delta,\epsilon}$ denote an arrow used in defining the weight. Each arrow $c$ which intersects $a_{\delta,\epsilon}$ gives a contribution of $\pm 1$ to $w_{\delta,\epsilon}$. Moreover, the contribution of $c$ does not change if its direction and sign are both changed. Therefore, $\text{ind}_{a_{\delta,\epsilon}}(c)$ is $1$ or $-1$ according to whether $M_{\epsilon}$ crosses $M_{\delta}$ from left to right or right to left at the classical crossing corresponding to $c$. Thus, $w_{\delta,\epsilon}=|[\delta \times \epsilon,\mu']|=\omega_{\delta,\epsilon}$. Similarly, $w_{\epsilon,\zeta}=\omega_{\epsilon,\zeta}$ and $w_{\delta,\zeta}=\omega_{\delta,\zeta}$ when each arrow of $D_{\tau}$ has at most one endpoint in each of the regions of $\tau$.
\newline
\newline
Now suppose that $D_{\tau}$ has an arrow whose endpoints lie in one region of $D_{||}'$. Again, let $\delta,\epsilon,\zeta$ denote the homology classes in the labeling of $D_{||}'$. If $K$ is given the oriented smoothing at the crossings corresponding to the arrows of $D_{||}'$, then the result is a three component link diagram on $\Sigma$. Denote the thee components by $K_{\delta}$, $K_{\epsilon}$, $K_{\zeta}$ according to their homology class on $\Sigma$.
\newline
\newline
At least one of the diagrams $K_{\delta}$, $K_{\epsilon}, K_{\zeta}$, say $K_{\delta}$, must contain at least one of the (unsmoothed) crossings of $K$.  Apply the oriented smoothing at each of the crossing of $K_{\delta}$ to obtain $n_{\delta}$ pairwise disjoint closed curves $M_{\delta}^1,\ldots,M_{\delta}^{n_{\delta}}$. Similarly, we apply the oriented smoothing at each of the crossings of $K_{\epsilon}$ and $K_{\zeta}$ to obtain two families of pairwise disjoint closed curves $M_{\epsilon}^1,\ldots,M_{\epsilon}^{n_{\epsilon}}$ and $M_{\zeta}^1,\ldots,M_{\zeta}^{n_{\zeta}}$. Each of these oriented simple closed curves corresponds to a homology class $H_1(\Sigma;\mathbb{Z})$. Define $\delta_q=[M_{\delta}^q]$, $\epsilon_r=[M_{\epsilon}^r]$, and $\zeta_s=[M_{\zeta}^s]$ for all $q$, $r$, $s$. It is clear that $\delta=\sum \delta_q$, $\epsilon=\sum \epsilon_r$, and $\zeta=\sum \zeta_s$ in $H_1(\Sigma;\mathbb{Z})$. We now compute $[\delta \times \epsilon,\mu']$, $[\epsilon \times \zeta,\mu']$, and $[\delta \times \zeta,\mu']$:

\begin{eqnarray*}
[\delta \times \epsilon,\mu'] &=& \sum_{q,r} [\delta_q \times \epsilon_r,\mu'] \\
&=& \sum_{q,r} M_{\delta}^q \cdot M_{\epsilon}^r, \\
\end{eqnarray*}
\begin{eqnarray*}
[\epsilon \times \zeta,\mu'] &=& \sum_{r,s} M_{\epsilon}^q \cdot M_{\zeta}^s, \\
\end{eqnarray*}
\begin{eqnarray*}
[\delta\times\zeta,\mu'] &=& \sum_{q,s} M_{\delta}^q \cdot M_{\zeta}^s.
\end{eqnarray*}  
Consider the transversal intersections between $M_{\delta}^q$ and $M_{\epsilon}^r$. If there are no transversal intersections, then $M_{\delta}^q \cdot M_{\epsilon}^p=0$. The intersections of $M_{\delta}^q$ and $M_{\epsilon}^r$ correspond to arrows of the Gauss diagram where one endpoint is in $M_{\delta}^q$ and one endpoint is in $M_{\epsilon}^r$. Then one endpoint of the arrow must be in $M_{\delta}$ and one endpoint must be in $M_{\epsilon}$. Thus, we conclude that $|M_{\delta}^q\cdot M_{\epsilon}^r|$ can be computed by adding the local intersection numbers corresponding to the crossings as in Figure \ref{fig_loc_int_numbers}. It follows that $w_{\delta,\epsilon}=|[\delta \times \epsilon,\mu']|=\omega_{\delta,\epsilon}$, $w_{\epsilon,\zeta}=\omega_{\epsilon,\zeta}$ and $w_{\delta,\zeta}=\omega_{\delta,\zeta}$. This completes the proof.
\end{proof}

\begin{figure}
\[
\begin{array}{cc}
\begin{array}{c}
\scalebox{.35}{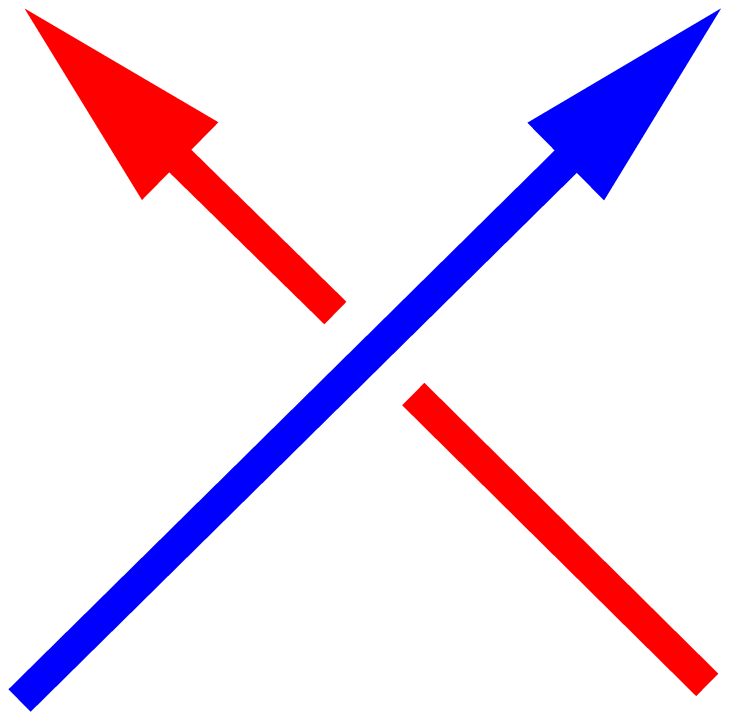} \\
\text{Contribution is } +1
\end{array}
& 
\begin{array}{c}
\scalebox{.35}{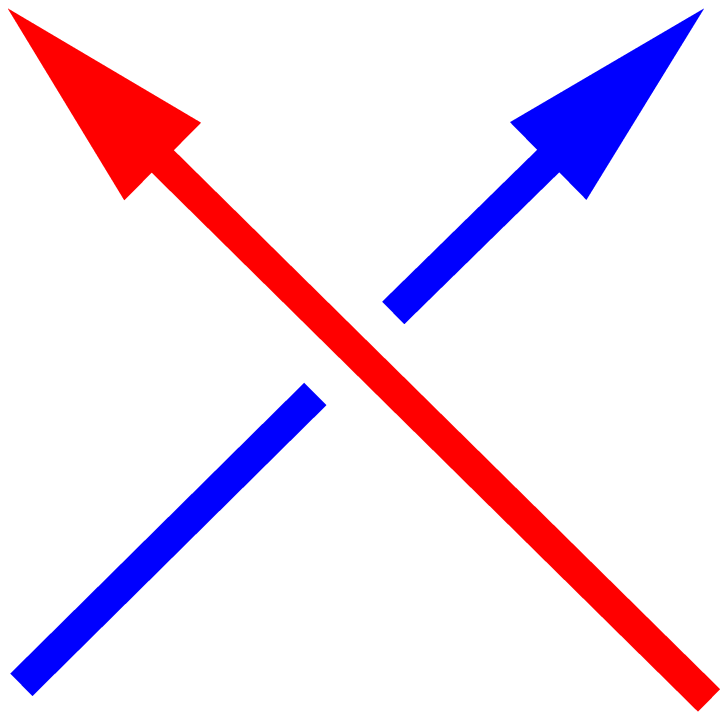} \\
\text{Contribution is } +1
\end{array} \\
\begin{array}{c}
\scalebox{.35}{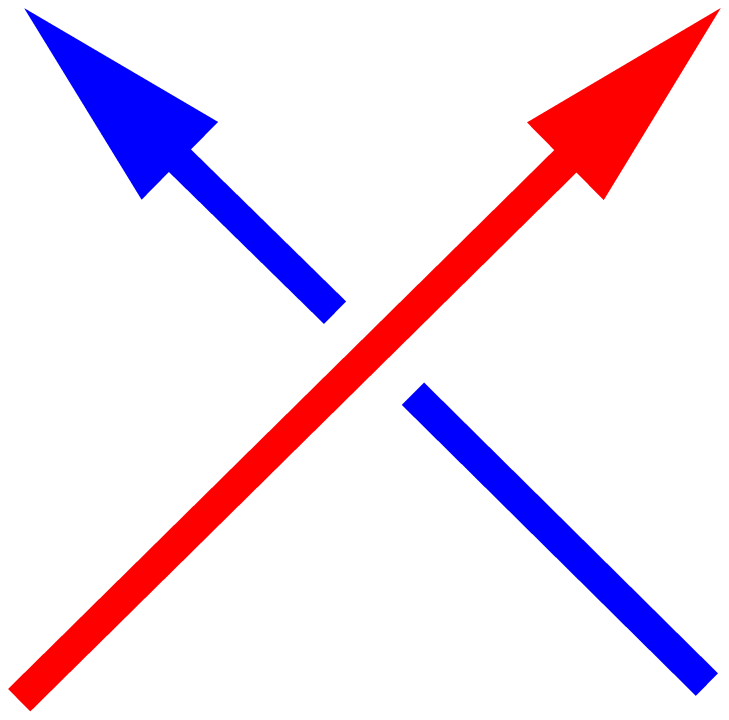} \\
\text{Contribution is } -1
\end{array}
& 
\begin{array}{c}
\scalebox{.35}{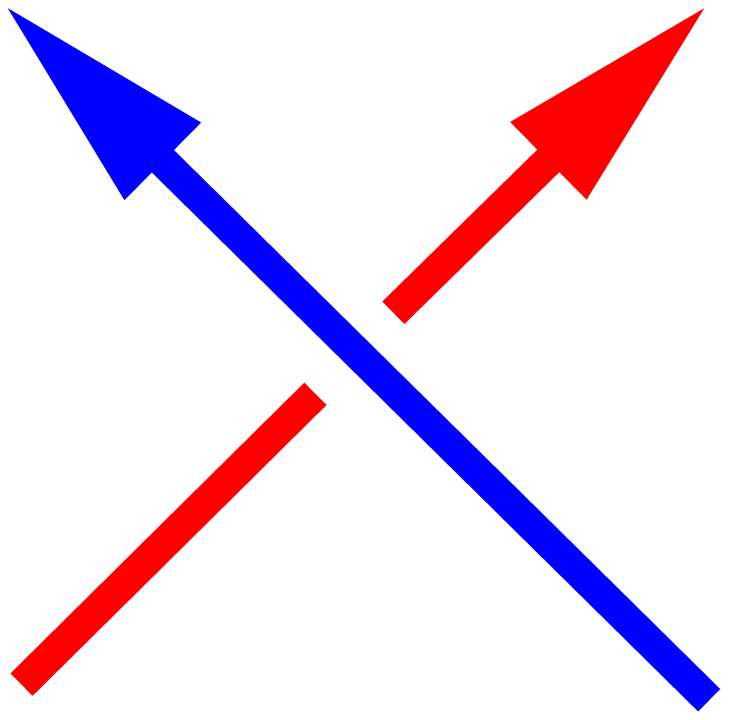} \\
\text{Contribution is } -1
\end{array} \\
\end{array}
\]
\caption{Computations of local topological intersection numbers for the four possibilities of classical crossings at a double point.} \label{fig_loc_int_numbers}
\end{figure}

\section{Concluding Remarks; A Problem} \label{sec_conc}

\noindent The existence of a universal finite-type invariant of classical knots has been long settled \cite{BN2}. Moreover, there is a universal finite-type invariant of knots in a given thickened surface \cite{AM,AMR}. Unfortunately, the existence of a universal finite-type invariant of virtual knots has been established only in the case of invariants of degree one \cite{allison}. 
\newline
\newline
The work of the present paper suggests the following approach to finding a universal finite-type invariant of virtual knots. Let $\Sigma$ be a closed oriented surface and let $\mu[\Sigma]:\mathbb{Z}[\mathscr{K}(\Sigma)] \to \mathscr{U}(\Sigma)$ denote a universal finite-type invariant of order $n$ for knots in $\Sigma \times I$. Here $\mathscr{U}(\Sigma)$ is some appropriate abelian group depending on $\Sigma$. For example, choose $\mu[\Sigma]$ according to the invariant given in \cite{AM} or choose $\mu[\Sigma]$ according to the invariant given in \cite{AMR}.
\newline
\newline
\centerline{
\fbox{\parbox{5.75in}{
\underline{\textbf{Problem}:} Find a virtual knot analogue of $\{\mu[\Sigma]:\mathbb{Z}[\mathscr{K}(\Sigma)] \to \mathscr{U}(\Sigma): \Sigma \in \mathscr{S}\}$. In particular, find a finite-type invariant of virtual knots $\mu:\mathbb{Z}[\mathscr{K}] \to \mathscr{U}$, where $\mathscr{U}$ is some abelian group, such that for all $\Sigma \in \mathscr{S}$ there is a map $\overline{\varphi}_{\Sigma}:\mathscr{U}(\Sigma) \to \mathscr{U}$ satisfying $\overline{\varphi}_{\Sigma} \circ \mu[\Sigma]=\mu \circ \varphi_{\Sigma}$. Prove that $\mu$ is a universal finite-type invariant of virtual knots.
}
}
}
\newline
\newline
\newline
The advantage to this approach is that it does not require one to ``invent'' a virtual knot invariant which might turn out to be universal.  Instead, we are borrowing universality from an invariant which is already known to be universal. Moreover, the structure of these invariants is fairly well understood.  The difficult part is to figure out how to capture the topological information in $\Sigma$ with a combinatorial structure for virtual knot diagrams. In this paper, intersection theory was used to make the connection to relative weights. Whether or not a similar approach will apply in full generality is unclear. 
\newline
\newline
A natural first place to start is to extend the arguments presented here to the Grishanov-Vassiliev finite-type invariants of order $n$. The authors strongly believe that such an extension holds but do not have a proof of it as of this writing.

\bibliographystyle{plain}
\bibliography{loop_bib}

\end{document}